\documentclass[a4paper,11pt]{amsart}

\usepackage{a4wide}
\usepackage[foot]{amsaddr}
\usepackage{mathabx}

\usepackage[breaklinks,bookmarks=false]{hyperref}
\hypersetup{colorlinks,linkcolor=blue,citecolor=blue,urlcolor=blue,plainpages=false}

\usepackage{amssymb}
\usepackage{mathrsfs}


\newcommand{\eq}{:=}


\newcommand{\grad}{\boldsymbol \nabla}
\renewcommand{\div}{\grad \cdot}
\newcommand{\curl}{\grad \times}

\newcommand{\ccurl}{\boldsymbol{\operatorname{curl}}}
\newcommand{\ddiv}{\operatorname{div}}

\newcommand{\jmp}[1]{[\![#1]\!]}
\newcommand{\avg}[1]{\{\!\!\{#1\}\!\!\}}

\newcommand{\BA}{\boldsymbol A}

\newcommand{\BH}{\boldsymbol H}
\newcommand{\BI}{\boldsymbol I}

\newcommand{\BL}{\boldsymbol L}
\newcommand{\BM}{\boldsymbol M}

\newcommand{\BX}{\boldsymbol X}

\newcommand{\ba}{\boldsymbol a}
\newcommand{\bb}{\boldsymbol b}

\newcommand{\be}{\boldsymbol e}
\newcommand{\bg}{\boldsymbol g}

\newcommand{\bn}{\boldsymbol n}
\newcommand{\bo}{\boldsymbol o}

\newcommand{\br}{\boldsymbol r}

\newcommand{\bv}{\boldsymbol v}
\newcommand{\bw}{\boldsymbol w}
\newcommand{\bx}{\boldsymbol x}


\newcommand{\CE}{\mathcal E}
\newcommand{\CF}{\mathcal F}

\newcommand{\CI}{\mathcal I}
\newcommand{\CJ}{\mathcal J}

\newcommand{\CP}{\mathcal P}

\newcommand{\CT}{\mathcal T}

\newcommand{\CV}{\mathcal V}


\newcommand{\LE}{\mathscr E}




\newcommand{\BCP}{\boldsymbol{\CP}}




\newtheorem{theorem}{Theorem}
\newtheorem{lemma}[theorem]{Lemma}
\newtheorem{remark}[theorem]{Remark}
\newtheorem{corollary}[theorem]{Corollary}

\numberwithin{theorem}{section}
\numberwithin{equation}{section}

\newcommand{\artlongtitle}{A new family of a posteriori error estimates for non-conforming finite element methods leading to stabilization-free error bounds}
\newcommand{\artshorttitle}{Stabilization-free a posteriori error estimates}
\newcommand{\artauthor}{T. Chaumont-Frelet}
\newcommand{\artaffiliation}{Inria, Univ. Lille, CNRS, UMR 8524 -- Laboratoire Paul Painlev\'e}

\newcommand{\LBA}{\boldsymbol \Lambda}
\newcommand{\lba}{\boldsymbol \lambda}

\newcommand{\AAA}{\underline{\BA}}
\newcommand{\bzero}{\bo}

\newcommand{\omF}{\omega_F}
\newcommand{\omK}{\omega_K}
\newcommand{\tomK}{\widetilde \omega_K}
\newcommand{\ttomK}{\widetilde{\widetilde \omega}_K}
\newcommand{\omKF}{\widehat \omega_K}
\newcommand{\omKE}{\check \omega_K}
\newcommand{\gaeD}{\gamma_{\ell,{\rm D}}}

\newcommand{\TCP}{\underline{\BCP}}

\newcommand{\GD}{\Gamma_{\rm D}}
\newcommand{\GN}{\Gamma_{\rm N}}

\newcommand{\CFIh}{\CF^{\rm I}_h}
\newcommand{\CFDh}{\CF^{\rm D}_h}
\newcommand{\CFNh}{\CF^{\rm N}_h}

\newcommand{\CLA}{C_{\rm L}}

\newcommand{\GRAD}{\mathfrak G}
\newcommand{\LIFT}{\mathfrak L}

\newcommand{\bsig}{\boldsymbol \sigma}
\newcommand{\btau}{\boldsymbol \tau}
\newcommand{\bxi}{\boldsymbol \xi}
\newcommand{\bphi}{\boldsymbol \phi}
\newcommand{\bpsi}{\boldsymbol \psi}
\newcommand{\bthe}{\boldsymbol \theta}
\newcommand{\bThe}{\boldsymbol \Theta}
\newcommand{\brho}{\boldsymbol \rho}

\newcommand{\RT}{\boldsymbol{RT}}
\newcommand{\ND}{\boldsymbol{N}}

\newcommand{\he}{h_{\ome}}
\newcommand{\pe}{\boldsymbol{\psi}_\ell}
\newcommand{\ome}{\omega_\ell}
\newcommand{\CTe}{\CT_h^\ell}

\newcommand{\te}{\boldsymbol{\tau}_\ell}

\newcommand{\ha}{h_{\oma}}
\newcommand{\pa}{\psi_{\ba}}
\newcommand{\oma}{\omega_{\ba}}
\newcommand{\toma}{\widetilde{\omega}_{\ba}}
\newcommand{\CTa}{\CT_h^{\ba}}
\newcommand{\CVa}{\CV_h^{\ba}}
\newcommand{\tCTa}{\widetilde{\CT}_h^{\ba}}

\newcommand{\tgrad}{\underline{\grad}}

\newcommand{\cor}{\bthe}

\newcommand{\cank}{\be^k}

\begin{document}

\title[\artshorttitle]{\artlongtitle}
\author{\artauthor$^\star$}

\address{\vspace{-.5cm}}
\address{\noindent \tiny \textup{$^\star$\artaffiliation}}

\begin{abstract}
We propose new a posteriori error estimators for non-conforming finite element discretizations
of second-order elliptic PDE problems. These estimators are based on novel reformulations
of the standard Prager--Synge identity, and enable to prove efficiency estimates without
extra stabilization terms in the error measure for a large class of discretization schemes.
We propose a residual-based estimator for which the efficiency constant scales optimally in
polynomial degree, as well as two equilibrated estimators that are polynomial--degree--robust.
One of these two estimators further leads to asymptotically constant-free error bounds.

\vspace{.25cm}
\noindent
{\sc Keywords.} a posteriori error estimate; discontinuous Galerkin method; equilibrated flux; high-order method; mixed finite element method; non-conforming finite element method; residual-based estimator

\end{abstract}

\maketitle
\thispagestyle{empty}


\section{Introduction}

In this work we are interested in finite element and discontinunous Galerkin
discretizations of the following second-order elliptic PDE problem:
\begin{equation}
\label{eq_poisson_strong_intro}
\left \{
\begin{array}{rcll}
-\div(\AAA\grad u) &=& f & \text{ in } \Omega,
\\
u &=& 0 & \text{ on } \GD,
\\
\AAA \grad u \cdot \bn &=& 0 & \text{ on } \GN.
\end{array}
\right .
\end{equation}
This problem is set in a three dimensional domain $\Omega \subset \mathbb R^3$
with its boundary $\partial \Omega$ partitioned into two disjoint subsets
$\GD$ and $\GN$. The right-hand side $f: \Omega \to \mathbb R$
and the diffusion coefficient $\AAA: \Omega \to \mathbb R^{3 \times 3}$
are given, whereas $u: \Omega \to \mathbb R$ is the unknown.
Here, we wish to numerically approximate the gradient $\GRAD \eq \grad u$ of $u$
by a piecewise polynomial ansatz $\GRAD_h$ on a simplicial mesh $\CT_h$ of $\Omega$.
More specifically, we focus on a posteriori error estimation, where the goal
is to provide a fully computable quantity $\eta$ which can be used to certify
the discretization error $\GRAD-\GRAD_h$ in a suitable norm.

An alternative way to state~\eqref{eq_poisson_strong_intro} is that
$\GRAD$ is the unique vector field such that (i) $(-\AAA\GRAD) \cdot \bn = 0$
on $\GN$ and $\div (-\AAA\GRAD) = f$ in $\Omega$, and
(ii) there exists a scalar potential $v$ such that $v = 0$ on $\GD$
and $\GRAD = \grad v$. As a result, a possible starting point for
the design of the estimator $\eta$ is the following Prager--Synge identity
\begin{equation}
\label{eq_prager_synge_intro}
\|\GRAD-\GRAD_h\|_{\AAA,\Omega}^2
=
\min_{\substack{\bsig \in \BH_{\GN}(\ddiv,\Omega) \\ \div \bsig = f}}
\|\AAA^{-1} \bsig+\GRAD_h\|_{\AAA,\Omega}^2
+
\min_{v \in H^1_{\GD}(\Omega)} \|\grad v-\GRAD_h\|_{\AAA,\Omega}^2
\end{equation}
which measures the deviation of the discrete approximation from (i) and (ii).
The notation in~\eqref{eq_prager_synge_intro} are standard, and rigorously
introduced below. The seminal work from which the identity in~\eqref{eq_prager_synge_intro}
originates is~\cite{prager_synge_1947a}. This identity also appears
in~\cite{ern_vohralik_2015a} under an extra assumption on $\GRAD_h$
and in a slightly different form in~\cite{pencheva_vohralik_wheeler_wildey_2013a}.
For completeness, it is rigorously established for general $\BL^2(\Omega)$ vector-fields
below.

For conforming methods, one looks for an approximation $u_h$ of $u$ in a subspace
of $H^1_{\GD}(\Omega)$. As a result, setting $\GRAD_h \eq \grad u_h$, the second
term in~\eqref{eq_prager_synge_intro} vanishes. In this case, residual-based
and flux-equilibrated estimator techniques are very well established and
control the first term in the right-hand side of~\eqref{eq_prager_synge_intro}
provided $\GRAD_h$ satisfies suitable orthogonality conditions, which in turn
is guaranteed by the numerical method producing $u_h$. We refer the reader
to~\cite{braess_pillwein_schoberl_2009a,melenk_wohlmuth_2001a} for an overview.
In contrast, for mixed and non-conforming methods, the second term
in~\eqref{eq_prager_synge_intro} does not vanish and must be controlled.

In cases where $\GRAD_h = \grad_h u_h$ is element-wise given by the gradient
of a (known) discrete non-conforming potential $u_h$,
a standard procedure is to postprocess the potential $u_h$ into a conforming version
$\widetilde{u}_h$, leading to the fully explicit estimation
\begin{equation}
\label{eq_potential_intro}
\min_{v \in H^1_{\GD}(\Omega)} \|\grad v-\GRAD_h\|_{\AAA,\Omega}
\leq
\|\grad \widetilde{u}_h-\GRAD_h\|_{\AAA,\Omega}
\end{equation}
of the second term in the right-hand side of~\eqref{eq_prager_synge_intro}.
This approach is employed, e.g., in~\cite{ainsworth_fu_2018a,alaoui_ern_2004a,ern_vohralik_2015a}.
An important observation is that since $\widetilde{u}_h$ is typically obtained
from averaging the values of $u_h$ across neighboring elements, $\grad \widetilde{u}_h$
is not (in general) solely defined in terms of $\GRAD_h$. Alternatively, if one does not
wish to explicitly compute $\widetilde{u}_h$, one might simply invoke the existence of
such a conforming function, and use the estimate
\begin{equation}
\label{eq_jump_intro}
\|\grad \widetilde{u}_h-\GRAD_h\|_{\AAA,\Omega}^2
\leq
C \sum_{K \in \CT_h} \frac{p^2}{h_K} \|\jmp{u_h}\|_{\partial K \setminus \GN}^2
\end{equation}
which again provides the desired bound in~\eqref{eq_prager_synge_intro}.
Here, $\jmp{u_h}$ denotes the jump of $u_h$, $p$ is the polynomial degree,
and $h_K$ is the diameter of the element $K \in \CT_h$. We note that in this case
too, the estimator cannot (in general) be determined only in terms of $\GRAD_h$.
For simplicity, we will respectively refer to the right-hand side of~\eqref{eq_potential_intro}
and~\eqref{eq_jump_intro} as the standard equilibrated and residual-based estimators
in the remainder of this introduction.

The fact that the estimators in~\eqref{eq_potential_intro} and~\eqref{eq_jump_intro}
are not solely defined in terms of $\GRAD_h$ can lead to complications. First of all,
if $\GRAD_h$ is not given as the gradient of a discrete potential, the above techniques
cannot readily apply. In addition, to the best of the author's knowledge, general error
lower bounds of the form
\begin{equation*}
\frac{p^2}{h_K} \|\jmp{u_h}\|_{\partial K \setminus \GN}
\leq
C \|\grad \widetilde{u}_h-\GRAD_h\|_{\AAA,\omK},
\end{equation*}
where $\omK$ is a local region around $K$, have never been established in the literature.
It means that in general, estimators defined using a potential are not efficient in
the natural error measure.

In this work, we propose different characterizations of the term measuring the
level of non-conformity in~\eqref{eq_prager_synge_intro}. When the domain $\Omega$
and the boundary decomposition correspond to a topologically trivial
setting, the following identities hold true for any vector field $\GRAD_h \in \BL^2(\Omega)$
\begin{equation}
\label{eq_new_identities}
\min_{v \in H^1_{\GD}(\Omega)} \|\grad v-\GRAD_h\|_{\AAA,\Omega}
=
\min_{\bphi \in \BH_{\GD}(\ccurl^0,\Omega)} \|\bphi-\GRAD_h\|_{\AAA,\Omega}
=
\max_{\substack{\bthe \in \BH_{\GN}(\ccurl,\Omega) \\ \|\curl \bthe\|_{\AAA^{-1},\Omega} = 1}}
(\GRAD_h,\curl \bthe)_\Omega.
\end{equation}
Here, we are going to show that for many numerical schemes of interest,
we can still upper bound the left-hand side of~\eqref{eq_new_identities}
by any of the two right-hand sides, up to a constant that only depends on
the shape-regularity of the mesh, and tends to one as the mesh is refined.

We respectively employ the first and second identities in~\eqref{eq_new_identities} to derive
new equilibrated and residual-based error estimators for non-conforming finite element
methods, with special emphasis on Raviart--Thomas mixed discretization and the interior
penalty discontinuous Galerkin (IPDG) scheme. For the equilibrated estimator, we propose
a construction of a curl-free field $\bphi_h$ built from localized N\'ed\'elec finite element
problems to be used in place of the minimizer $\bphi$ in~\eqref{eq_new_identities}, whereas
the residual based estimator takes the form
\begin{equation}
\label{eq_residual_based_intro}
\min_{v \in H^1_{\GD}(\Omega)} \|\grad v-\GRAD_h\|_{\AAA,\Omega}
\leq
C
\sum_{K \in \CT_h} \left \{
\frac{h_K^2}{p^2} \|\curl \GRAD_h\|_K^2
+
\frac{h_K}{p} \|\jmp{\GRAD_h} \times \bn\|_{\partial K \setminus \GN}^2
\right \}.
\end{equation}
We additionally propose an alternative flux reconstruction technique hinging on a further
reformulation of the curl-constrained minimization problem in~\eqref{eq_new_identities}.

In all cases, the proposed estimators are shown to be efficient in the natural energy norm,
i.e., without adding any term linked to the potential to the local error measure
$\|\GRAD-\GRAD_h\|_{\AAA,\omK}$. In addition, the equilibrated error estimators are
polynomial-degree-robust, whereas the efficiency constant of the residual-based estimator
grows with the optimal known rate with the polynomial degree. We finally emphasize
that $\GRAD_h$ does not need to be given element-wise as the gradient of a potential.
Below, we highlight the advantage of the proposed estimators as compared to standard
ones in~\eqref{eq_potential_intro} and~\eqref{eq_jump_intro}. Specifically, we focus
on (a) non-conforming methods of Crouzeix--Raviart type, (b) mixed methods, and (c)
discontinuous Galerkin methods.

(a) We first discuss the case where $\GRAD_h = \grad_h u_h$
and the average of the jump vanishes over each mesh faces, i.e.
$(\jmp{u_h},1)_F = 0$ for all mesh face $F$. This applies, e.g., to
Crouzeix--Raviart (see~\cite{crouzeix_raviart_1973a}) and primal hybrid
(see~\cite{raviart_thomas_1977b})
discretizations of~\eqref{eq_poisson_strong_intro}. In this case, for
the standard residual-based estimator, we can employ a face-wise Poincar\'e
inequality (see e.g.~\cite{bebendorf_2003a}) leading to the estimate
\begin{equation}
\label{eq_poincare_intro}
\sum_{K \in \CT_h} \frac{p^2}{h_K} \|\jmp{u_h}\|_{\partial K \setminus \GN}^2
\leq
C
\sum_{K \in \CT_h} p^2 h_K \|\jmp{\grad u_h} \times \bn_K\|_{\partial K \setminus \GN}^2.
\end{equation}
As a result, the standard residual-based estimator is indeed efficient since the right-hand side
essentially corresponds to the estimator proposed in this work. However, we can see
that the right-hand side is multiplied by $p^3$ as compared to~\eqref{eq_residual_based_intro},
indicating that the estimator may not behave optimally as $p$ increases. As a result,
our residual-based estimator exhibit an improved $p$-dependence over the standard one.
When a potential reconstruction is carefully constructed, the construction
in~\cite{ern_vohralik_2015a,ern_vohralik_2021a} is already polynomial-degree-robust.
Therefore, for equilibrated estimators, although we expect that the present approach
brings new insight, the present work does not lead to quantitative improvements in practice.

(b) For mixed finite element methods, it is natural to consider $\GRAD_h = \AAA^{-1}\bsig_h$,
where $\bsig_h$ is the Raviart--Thomas approximation of the flux. Since
$\GRAD_h$ is not a piecewise gradient, the standard procedure described
above cannot readily apply. In~\cite{cockburn_zhang_2014a} and~\cite{ern_vohralik_2015a},
the standard residual-based and equilibrated estimators are therefore applied to
a postprocessing $\grad_h u_h^\star$ of $\bsig_h$. In both cases, the estimator
can be shown to be efficient, with an optimal scaling in $p$ for the efficiency
constant. It is therefore true that our new estimators do not seem to present
significant improvements over the standard ones as far as efficiency is concerned.
However, these standard estimators are not suited for adaptivity purposes, and
adaptive algorithms rather use residual-based estimators similar to the one we propose
in~\eqref{eq_residual_based_intro} that were previously established
in~\cite[Theorem 3.1]{alonso_1996a} and \cite[Lemma 5.1]{carstensen_1997a}.
Unfortunately, both works are limited to two-dimensional problems and trivial topologies.
More specifically, neither work consider mixed boundary conditions, and the domain
is respectively assumed to be simply--connected and convex in
\cite{alonso_1996a} and \cite{carstensen_1997a}. Besides, the estimates obtained
in \cite{alonso_1996a,carstensen_1997a} are not $hp$-optimal.
As previously mentioned, to the best of the author's knowledge, all works dealing with
(optimal) convergence of adaptive mixed finite element discrization either
explicitly requires a trivial topology, as in~\cite{cascon_nochetto_sibert_2007a,chen_holst_xu_2008a},
or are based on~\cite{alonso_1996a,carstensen_1997a} where the assumption is
made as in~\cite{becker_mao_2008a,carstensen_hoppe_2006a,carstensen_rabus_2011a}.
The present results thus allow the above mentioned adaptive convergence results
to apply to general domains and boundary conditions.

(c) Finally, for discontinuous Galerkin discretizations,
an estimation such as~\eqref{eq_poincare_intro} does not hold,
unless the stabilization penalization is artificially
enlarged, see~\cite{ainsworth_rankin_2010a,bonito_nochetto_2010a,karakashian_pascal_2007a}.
As a result, error estimators including the jumps of the potential are not shown
to be efficient in general, unless said jumps are included in the error measure, see
e.g.~\cite{ainsworth_fu_2018a,
chen_li_xiang_2021a,
ern_vohralik_2015a,
houston_schotzau_wihler_2006a}.
This lack of efficiency in a natural error measure further causes problems in
the analysis of adaptive schemes based on the estimator, as in~\cite{bonito_nochetto_2010a,
gudi_guzman_2014a,
karakashian_pascal_2003a,
karakashian_pascal_2007a,
kreuzer_georgoulis_2018a}.
To the best of our knowledge, the estimators proposed in this work are the first
to be efficient for discontinuous Galerkin discretizations in a natural norm
without any assumption on the stabilization.

We finally mention two ideas related to the one developed here that recently appeared
in the literature. (i) Closest to the present results are the previous
work~\cite{chaumontfrelet_2025a} of the author and~\cite{bertrand_carstensen_grassle_tran_2023a}.
In a nutshell, the key idea of both works is to employ extra orthogonality conditions satisfied
by $\GRAD_h$ to replace the gradient vector field by a curl-free one
in~\eqref{eq_prager_synge_intro}. However, the setting in~\cite{chaumontfrelet_2025a}
is fairly different from here, since it considers magnetostatic problems discretized
by mixed methods, and only covers flux-equilibrated estimators. On the other hand,
whereas the present results are very closely related
to~\cite{bertrand_carstensen_grassle_tran_2023a}, they represent a significant improvement.
First, the technique used in~\cite{bertrand_carstensen_grassle_tran_2023a}
leads to unknown constants depending on the domain. This can be specifically
seen in~\cite[Theorem 1]{bertrand_carstensen_grassle_tran_2023a}. Besides, although
the estimator proposed in~\cite[Equation (4)]{bertrand_carstensen_grassle_tran_2023a}
is suited for lowest-order discretization, it is not $hp$-optimal. In contrast,
the residual-based estimator proposed Lemma~\ref{lemma_res_abstract_rel} below
has an optimal $hp$-scaling. (ii) As already alluded to earlier, another displeasing
aspect of the jump term in~\eqref{eq_jump_intro} is the asymmetry of the $p$ and $h_K$
powers in the multiplicative factor, which suggests an $hp$-suboptimality. This is a
well-known problem that stems from the (apparently necessary) use of an inverse inequality.
In~\cite{cangiani_dong_georgoulis_2023a,dong_ern_2024a},
ideas related to the one employed below and
in~\cite{bertrand_carstensen_grassle_tran_2023a}
are used to bypass such an inverse inequality. Yet, the focus is different,
since the final error estimators in~\cite{cangiani_dong_georgoulis_2023a,dong_ern_2024a}
still contains a jump term from the potential (albeit with an improved $p$-scaling),
and is therefore not efficient in the natural energy norm in general.

The remainder of this work is organized as follows.
In Section~\ref{section_setting}, we make the setting precise,
introduce key notation and state useful results from the literature.
Section~\ref{section_abstract} is dedicated to the abstract derivation
of~\eqref{eq_new_identities} under generic assumptions on $\GRAD_h$.
Finally, Sections~\ref{section_residual},~\ref{section_equilibrated}
and~\ref{section_alternative} are respectively devoted to the three
practical estimators we construct from~\eqref{eq_new_identities}, namely,
a residual-based estimator, a fully-equilibrated estimator, and an alternative
equilibration strategy.

\section{Setting}
\label{section_setting}

\subsection{Domain}

Throughout this manuscript, $\Omega \subset \mathbb R^3$ is an open domain
with a Lipschitz polyhedral boundary $\partial \Omega$. The boundary is split
into two Lipschitz relatively open polyhedral subdomains $\GD$ and $\GN$
in such a way that $\overline{\GD} \cup \overline{\GN} = \partial \Omega$.
Notice that we do not assume that $\Omega$, nor $\GD$ or $\GN$ are simply connected.
We denote by $\bn$ the unit vector normal to $\partial \Omega$ pointing outside
$\Omega$.

\subsection{Diffusion coefficient}

The diffusion coefficient
$\AAA: \Omega \to \mathbb R^{3 \times 3}$ is a measurable
symmetric matrix-valued function. For any measurable set $U \subset \Omega$,
we set
\begin{equation*}
\alpha_{\min,U} \eq \underset{\bx \in U}{\operatorname{ess\;inf}}
\min_{\substack{\be \in \mathbb R^d \\ |\be| = 1}}
\AAA(\bx) \be \cdot \be,
\qquad
\alpha_{\max,U} \eq \underset{\bx \in U}{\operatorname{ess\;sup}}
\max_{\substack{\be \in \mathbb R^d \\ |\be| = 1}}
\AAA(\bx) \be \cdot \be,
\end{equation*}
and we assume that $\alpha_{\min,\Omega} > 0$ and $\alpha_{\max,\Omega} < +\infty$.
Some results below will require that $\AAA$ is piecewise polynomial, which we will
specify case by case.

\subsection{Function spaces}

For any open set $U \subset \Omega$ with Lipschitz boundary, $L^2(U)$ is the Lebesgue space of
square-integrable functions and $\BL^2(U) \eq [L^2(U)]^3$. The usual norm and
inner product of both spaces are denoted by $\|{\cdot}\|_U$ and $({\cdot},{\cdot})_U$.
If $\underline{\BM}: U \to \mathbb R^{3 \times 3}$ is a measurable symmetric
matrix-valued functions, we employ the notation $\|\bv\|_{\underline{\BM},U}^2
\eq (\underline{\BM}\bv,\bv)_U$ for $\bv \in \BL^2(U)$. If $\underline{\BM}$
is uniformly bounded from above and below in the sense of quadratic forms,
then $\|{\cdot}\|_{\underline{\BM},U}$
is a norm on $\BL^2(U)$ equivalent to standard one. We will also employ the same notation
for Lebesgue spaces on two-dimensional relatively open affine subsets $V \subset \Omega$.

The standard Sobolev space of scalar functions $v \in L^2(U)$ with $\grad v \in \BL^2(U)$
is denoted by $H^1(U)$, and we let $\BH^1(U) \eq [H^1(U)]^3$. If $\gamma \subset \partial U$
is a relatively open subset, then $H^1_{\gamma}(U) \subset H^1(U)$ collects functions with
vanishing trace on $\gamma$. For $\bv \in \BH^1(U)$, we denote by
$\tgrad \bv = (\grad \bv_1,\cdots,\grad \bv_d)$ its gradient, and the application 
\begin{equation*}
\|\tgrad \bv\|_{\underline{\BM},\Omega}^2 \eq \sum_{k=1}^d \|\grad \bv_k\|_{\underline{\BM},\Omega}^2,
\end{equation*}
is then equivalent to the standard semi-norm of $\BH^1(\Omega)$ if $\underline{\BM}: U \to \mathbb R^{3 \times 3}$
is symmetric, measurable and uniformly bounded from abvoe and below. We skip the $\underline{\BM}$
subscript when $\underline{\BM} = \underline{\BI}$. Lebesgue and Sobolev spaces are
extensively described, e.g. in~\cite{adams_fournier_2003a}.

If $v: U \to \mathbb{R}$ is measurable, we will employ the notation
\begin{equation*}
\|v\|_{L^\infty(U)} \eq \operatorname{ess} \sup_{\bx \in U} |v(\bx)|
\end{equation*}
for its essential supremum. Similarly, if $\bv: U \to \mathbb{R}^3$,
then $\|\bv\|_{\BL^\infty(U)} = \||\bv|\|_{L^\infty(\Omega)}$.

We will also use Sobolev spaces of vector-valued functions with well-defined curl
and divergence. Namely, $\BH(\ccurl,U)$ and $\BH(\ddiv,U)$ respectively contain
functions $\bv \in \BL^2(U)$ such that $\curl \bv \in \BL^2(U)$ and $\div \bv \in L^2(U)$,
where $\curl$ and $\div$ are the curl and divergence operators defined in the sense
of distributions. We further denote by $\BH_{\gamma}(\ccurl,U)$ and $\BH_{\gamma}(\ddiv,U)$
the subspaces of functions with vanishing tangential and normal traces on
$\gamma$, respectively. For
more details, we refer the reader to~\cite{girault_raviart_1986a}, and in particular
to~\cite{alonso_valli_1996a} for the definition of traces on portions of the boundary.

We use the short-hand notation $L^2_\gamma(U) \eq \div \BH_\gamma(\ddiv,U)$.
This space coincides with $L^2(U)$ unless $\gamma = \partial U$, in which case it is
the subset of functions with vanishing mean-value. Finally, $\BH_\gamma(\ddiv^0,U)$
is the subspace of $\BH_{\gamma}(\ddiv,U)$ collecting divergence-free functions.

\subsection{Mesh}

We consider a simplicial mesh $\CT_h$ of $\Omega$ consisting of (open) tetrahedral elements $K$.
We assume that the mesh is conforming, i.e. $\cup_{K \in \CT_h} \overline{K} = \overline{\Omega}$,
and matching, meaning that if the intersection $\overline{K_-} \cap \overline{K_+}$ of two
distinct elements $K_\pm \in \CT_h$ is non-empty, then it is a subsimplex (i.e. a vertex,
a full edge or a full face) of both elements. These assumptions are fully standard,
see e.g.~\cite[Definition 8.11]{ern_guermond_2021a}, but exclude hanging-nodes.
We then respectively denote by $\CV_h$, $\CE_h$ and $\CF_h$ the set of vertices,
edges and faces of the mesh $\CT_h$, and we
classically assume that the mesh matches the boundary decomposition. This means
that for all faces $F \in\CF_h$, if $F \subset \partial \Omega$, then either
$F \subset \GD$ or $F \subset \GN$.

For an element $K \in \CT_h$, $\CV(K) \subset \CV_h$, $\CE(K) \subset \CE_h$, and
$\CF(K) \subset \CF_h$ are the set of vertices, edges and faces of $K$. We also
introduce the subsets $\CFIh, \CFDh, \CFNh \subset \CF_h$ of faces
respectively
lying inside $\Omega$, on $\GD$ and on $\GN$. We associate with each face $F \in \CF_h$ a unit
normal vector $\bn_F$.  When $F \in \CFDh \cup \CFNh$, we require that $\bn_F$ points outside
$\Omega$, whereas the orientation is arbitrary but fixed if $F \in \CFIh$.

For all $K \in \CT_h$, $h_K$ is the diameter of $K$ and $\rho_K$ is the diameter of
the largest ball contained in $\overline{K}$. The value
$\kappa_K \eq h_K/\rho_K \geq 1$ is then
the shape-regularity parameter of $K$. For any submesh $\CT \subset \CT_h$,
we let $\kappa_{\CT} \eq \max_{K \in \CT} \kappa_K$. Finally, if $U \subset \Omega$
is a subset covering the elements $K \in \CT$, we will also alternatively employ
the notation $\kappa_U \eq \kappa_{\CT}$.

\subsection{Element patches}

For $\ba \in \CV_h$, we denote by $\CTa \subset \CT_h$ the vertex patch
of elements $K \in \CT_h$ such that $\ba \in \CV(K)$, and by $\CVa$ the
corresponding set of vertices. We also employ the
notation $\oma \subset \Omega$ for the associated open domain.
We further employ the notation $\tCTa = \cup_{\bb \in \CVa} \CTa$
for the extended vertex patch including one extra later of elements,
and denote by $\toma \subset \Omega$ the corresponding open domain.

For $K \in \CT_h$, the element patch $\omK \subset \Omega$ is the open domain
covering all the elements $K' \in \CT_h$ sharing a vertex with $K$. We also
respectively denote by $\tomK$ and $\ttomK$ the element patches extended by
one and two layers of elements.

\subsection{Jumps and averages}

Let $H^1(\CT_h)$ stand for the set of $v \in L^2(\Omega)$ such that
$v|_K \in H^1(K)$ for all $K \in \CT_h$. If $F \in \CFIh$ is an internal
face, we introduce the jump and average of a function $v \in H^1(\CT_h)$
through $F$ as follows
\begin{equation*}
\jmp{v}_F \eq v_+ \bn_+ \cdot \bn_F + v_- \bn_- \cdot \bn_F
\qquad
\avg{v}_F \eq \frac{1}{2}(v_+ + v_-),
\end{equation*}
where $K_\pm \in \CT_h$ are the only two elements such that $F = \partial K_- \cap \partial K_+$,
$v_\pm \eq v|_{K_\pm}|_F$, and $\bn_\pm$ is the unit normal pointing outside $K_\pm$. On the other hand,
for a boundary face $F \in \CFDh \cup \CFNh$, we simply set
\begin{equation*}
\jmp{v}_F \eq v|_F \qquad \avg{v}_F \eq v|_F.
\end{equation*}
We omit the subscript $F$ when it is clear from context, and also employ similar notation
for vector-valued functions.

\subsection{Finite element spaces}

For $q \geq 0$ and $K \in \CT_h$, $\CP_q(K)$ is the space of polynomials of degree
at most $q$ defined on $K$, $\BCP_q(K) \eq [\CP_q(K)]^3$, and $\TCP_q(K) \eq [\CP_q(K)]^{3 \times 3}$.
For $q \geq 1$, we will also need the Raviart--Thomas and N\'ed\'elec polynomial spaces
$\RT_q(K) \eq \BCP_{q-1}(K) + \bx \CP_{q-1}(K)$ and
$\ND_q(K) \eq \BCP_{q-1}(K) + \bx \times \BCP_{q-1}(K)$.
These spaces are extensively discussed in~\cite[Chapters 7, 14 and 15]{ern_guermond_2021a}.
When $\CT \subset \CT_h$, $\CP_q(\CT)$ stands for the set of functions whose restriction
to each $K \in \CT$ belongs to $\CP_q(K)$, and we employ similar definitions for
$\BCP_q(\CT)$ $\TCP_q(\CT)$, $\RT_q(\CT)$ and $\ND_q(\CT)$. Notice that these spaces do not have
build-in compatibility conditions.

For a face $F \in \CF_h$ and $q \geq 0$, $\CP_q(F)$ denotes the set of polynomials of
degree at most $q$ defined on $F$, and if $v_q \in \CP_q(F)$ then
$\grad_F v_q$ is its tangential gradient along $F$.

\subsection{Data-oscillation}

For $v \in L^2(\Omega)$ and $q \geq 1$, we employ the notation
\begin{equation}
\label{eq_data_oscillation}
\Pi_{hq} v \eq \arg \min_{v_h \in \CP_{q-1}(\CT_h)} \|v-v_h\|_\Omega.
\end{equation}
This projection is local, and we have
\begin{equation}
\label{eq_data_oscillation_h}
\|v-\Pi_{hq} v\|_K \leq \frac{h_K}{\pi} \|\grad v\|_K
\end{equation}
as well as
\begin{equation}
\label{eq_data_oscillation_hp}
\|v-\Pi_{hq} v\|_K \leq C(\kappa_K) \frac{h_K}{q} \|\grad v\|_K
\end{equation}
for all $K \in \CT_h$. These estimates are respectively established
in~\cite{bebendorf_2003a} and~\cite{babuska_suri_1987a}.

\subsection{Cohomology space}

We have the orthogonal decomposition
\begin{equation*}
\label{eq_cohomology}
\BH_{\GN}(\ddiv^0,\Omega)
=
\curl \BH_{\GN}(\ccurl,\Omega) \oplus_{\AAA^{-1}} \LBA
\end{equation*}
in the inner product $(\AAA^{-1}\cdot,\cdot)_\Omega$,
where $\LBA \subset \BH_{\GN}(\ddiv^0)$ is a finite-dimensional
subspace, see~\cite{duff_1952a}.

We introduce
\begin{equation}
\label{eq_gamma}
\gamma_h
\eq
\max_{\substack{\lba \in \LBA \\ \|\lba\|_{\AAA^{-1},\Omega} = 1}}
\min_{\substack{\lba_h \in \RT_1(\CT_h) \cap \BH_{\GN}(\ddiv^0,\Omega)
\\
\lba-\lba_h \in \curl\BH_{\GN}(\ccurl,\Omega)}}
\|\lba-\lba_h\|_{\AAA^{-1},\Omega}.
\end{equation}
We will establish the following result in Appendix~\ref{appendix_gamma}.

\begin{lemma}[Control of $\gamma_h$]
\label{lemma_gamma}
We have
\begin{equation}
\label{eq_gamma_st}
\gamma_h
\leq
C(\kappa_{\CT_h}) \sqrt{\max_{K \in \CT_h}\frac{\alpha_{\max,\omK}}{\alpha_{\min,\omK}}}.
\end{equation}
In addition, there exists an $s > 0$ such that
\begin{equation}
\label{eq_gamma_cv}
\gamma_h
\leq
C \left (\frac{h}{\ell_\Omega}\right )^s,
\end{equation}
where $\ell_\Omega$ is the diameter of $\Omega$.
In~\eqref{eq_gamma_cv}, $s$ and $C$ depend on the shape of $\Omega$
and of the boundary partition, as well as on $\AAA$.
\end{lemma}

We note that it is also possible to introduce a definition
of $\gamma_h$ with higher-order elements, whereby convergence
in~\eqref{eq_gamma_cv} also happens when increasing the degree.
For simplicity, we refrain from doing so here.

\subsection{Model problem}

Throughout the manuscript we consider a fixed load term $f: \Omega \to \mathbb R$,
and we look for $u: \Omega \to \mathbb R$ such that
\begin{equation}
\label{eq_poisson_strong}
\left \{
\begin{array}{rcll}
-\div(\AAA\grad u) &=& f & \text{ in } \Omega,
\\
u &=& 0 & \text{ on } \GD,
\\
\AAA \grad u \cdot \bn &=& 0 & \text{ on } \GN.
\end{array}
\right .
\end{equation}
More formally, we assume that $f \in L^2(\Omega)$, and $u$ is the unique element
of $H^1_{\GD}(\Omega)$ such that
\begin{equation}
\label{eq_poisson_weak}
(\AAA\grad u,\grad v)_\Omega = (f,v)_\Omega
\end{equation}
for all $v \in H^1_{\GD}(\Omega)$.

\subsection{Numerical schemes}

We conclude this section with the numerical schemes that we will analyze.

\subsubsection{Discontinuous Galerkin scheme}
\label{section_ipdg_scheme}

For $v_h \in \CP_p(\CT_h)$, we define its jump lifting
$\LIFT_h(v_h) \in \BCP_{p-1}(\CT_h)$ by requiring that
\begin{equation*}
(\LIFT_h(v_h),\bw_h)_\Omega = \sum_{F \in \CFIh \cup \CFDh}
(\jmp{v_h},\avg{\bw_h} \cdot \bn_F)_F
\end{equation*}
for all $\bw_h \in \BCP_{p-1}(\CT_h)$. We then introduce
the discrete gradient of $v_h \in \CP_p(\CT_h)$ by setting
\begin{equation*}
\GRAD_h(v_h) \eq \grad_h v_h + \LIFT_h(v_h).
\end{equation*}
One easily checks (see also~\cite[Section 3]{chaumontfrelet_2023a}) that
\begin{equation}
\label{eq_stokes_discrete_gradient}
(\GRAD_h(v_h),\bxi_h)_\Omega = (v_h,\div \bv_h)_\Omega
\end{equation}
for all $\bxi_h \in \BCP_{p-1}(\CT_h) \cap \BH_{\GN}(\ddiv,\Omega)$.

Consider a symmetric bilinear stabilization form
$s_h: \CP_p(\CT_h) \times \CP_p(\CT_h) \to \mathbb R$
such that $s_h(v_h,w_h) = 0$ for all $v_h \in \CP_p(\CT_h) \cap H^1_{\GD}(\Omega)$
and $w_h \in \CP_p(\CT_h)$, and, for all $z_h \in \CP_p(\CT_h)$, $s_h(z_h,z_h) > 0$
and $s_h(z_h,z_h) = 0$ implies that $z_h \in H^1_{\GD}(\Omega)$. Then,
the interior penalty discontinuous Galerkin (IPDG) schemes reads as follows: There exists a unique $u_h \in \CP_p(\CT_h)$
such that
\begin{equation}
\label{eq_ipdg_scheme}
(\AAA\GRAD_h(u_h),\GRAD_h(v_h))_\Omega + s_h(u_h,v_h) = (f,v_h)_\Omega
\end{equation}
for all $v_h \in \CP_p(\CT_h)$.

For later references, we observe that
\begin{equation}
\label{eq_ipdg_galerkin_orthogonality}
(\AAA \GRAD_h(u_h),\grad v_h)_\Omega = (f,v_h)_\Omega
\end{equation}
for all $v_h \in \CP_p(\CT_h) \cap H^1_{\GD}(\Omega)$,
since the stabilization vanishes and $\GRAD_h(v_h) = \grad v_h$ for conforming functions.

\begin{remark}[Stability]
In practice, one needs to require that
\begin{equation}
\label{eq_assumption_stabilization}
s(v_h,v_h) \geq C(\kappa_{\CT_h}) \sum_{F \in \CFIh \cap \CFDh} \alpha_{\max,\omF} \frac{p^2}{h_F} \|\jmp{u_h}\|_F^2
\end{equation}
to ensure suitable stability properties of the bilinear form in the left-hand
side of~\eqref{eq_ipdg_scheme}, and optimal convergence rates for the approximation
$u_h$ to $u$. However, we do not need such an assumption here from the a posteriori
point of view. In fact, our analysis applies to any function $u_h \in \CP_p(\CT_h)$
such that \eqref{eq_ipdg_galerkin_orthogonality} holds true.
\end{remark}

\begin{remark}[Standard formulation]
\label{remark_standard_IPDG}
The IPDG scheme is typically not formulated (and implemented) as presented
in~\eqref{eq_ipdg_scheme}, but rather, one looks for $u_h \in \CP_p(\CT_h)$
such that $a_h(u_h,v_h) = (f,v_h)_\Omega$ for all $v_h \in \CP_p(\CT_h)$
where
\begin{multline*}
a_h(u_h,v_h)
\eq
(\AAA\grad_h u_h,\grad_h v_h)_\Omega
\\
-
\sum_{F \in \CFIh \cup \CFDh} (\jmp{u_h},\avg{\AAA\grad v_h} \cdot \bn_F)_F
-
\sum_{F \in \CFIh \cup \CFDh} (\avg{\AAA\grad u_h} \cdot \bn_F,v_h)_F
\\
+
\beta \sum_{F \in \CFIh \cup \CFDh} \alpha_{\max,\omF} \frac{p^2}{h_F} (\jmp{u_h},\jmp{v_h})_F
\end{multline*}
with $\beta > 0$ large enough. When $\AAA \in \TCP_0(\CT_h)$,
the definition of the lifting $\LIFT_h$ shows that in fact
\begin{equation*}
a_h(u_h,v_h)
=
(\AAA\GRAD_h(u_h),\GRAD_h(v_h))_\Omega
+
s_h(u_h,v_h)
\end{equation*}
where
\begin{align*}
s_h(u_h,v_h)
\eq
\beta \sum_{F \in \CFIh \cup \CFDh} \alpha_{\max,\omF} \frac{p^2}{h_F} (\jmp{u_h},\jmp{v_h})_F
-
(\AAA\LIFT_h(u_h),\LIFT_h(v_h))_\Omega,
\end{align*}
satisfies~\eqref{eq_assumption_stabilization}.
\end{remark}

\subsubsection{Mixed finite element method}
\label{section_mixed_fem}

The mixed finite element formulation consists in finding
$(\bsig_h,r_h) \in \RT_p(\CT_h) \cap \BH_{\GN}(\ddiv,\Omega) \times \CP_{p-1}(\CT_h) \cap L^2_{\GN}(\Omega)$
such that
\begin{equation}
\label{eq_mixed_fem}
\left \{
\begin{array}{rcl}
(\AAA^{-1}\bsig_h,\bv_h)_\Omega + (r_h,\div \bv_h)_\Omega &=& 0
\\
(\div \bsig_h,q_h)_\Omega &=& (f,q_h)_\Omega
\end{array}
\right .
\end{equation}
for all $\bv_h \in \RT_p(\CT_h) \cap \BH_{\GN}(\ddiv,\Omega)$ and $q_h \in \CP_{p-1}(\CT_h) \cap L^2_{\GN}(\Omega)$.
This saddle-point problem is well-posed due to standard inf-sup theory, see~\cite{brezzi_1974a}.

For later references, we record that the first and second equations in~\eqref{eq_mixed_fem}
respectively imply that
\begin{equation}
\label{eq_orthogonality_mixed_fem}
(\AAA^{-1}\bsig_h,\bxi_h)_\Omega = 0
\end{equation}
for all $\bxi_h \in \RT_p(\CT_h) \cap \BH_{\GN}(\ddiv^0,\Omega)$ and
\begin{equation}
\label{eq_equilibration_mixed_fem}
\div \bsig_h = \Pi_{hp} f.
\end{equation}
These two equations are the key properties needed in our a posteriori analysis.
Crucially, our estimators do not use the Lagrange multiplier $r_h$, and can be
solely computed from $\bsig_h$.

\subsubsection{Other examples of covered schemes}

We finally mention that other types of scheme are covered by our abstract framework,
but that we won't cover those in detail. These include mixed conforming discretizations of
the primal-hybrid formulation of~\eqref{eq_poisson_strong} as described
in~\cite{raviart_thomas_1977b} and non-conforming discretization of~\eqref{eq_poisson_weak} by
Crouzeix--Raviart elements as per~\cite{crouzeix_raviart_1973a,ciarlet_dunkl_sauter_2018a}.

\section{Generalized Prager--Synge estimates}
\label{section_abstract}

As motivated in the introduction, our core contribution is to derive generalized
Prager--Synge estimates. The ``standard'' Prager--Synge identity is established
in~\eqref{eq_standard_prager_synge} below for completeness. It was was first seminally
established in~\cite{prager_synge_1947a} for conforming primal finite element discretizations,
and later extended to non-conforming discretizations, as in~\cite{ern_vohralik_2015a}.
Our goal here is to provide an alternative version, whereby the second term in the right-hand
(which deals with the non-conforming nature of the discretization) is reformulated in a way
that leads to new error estimators.

\subsection{The standard Prager--Synge identity}

For completeness, we start by recapping the standard Prager--Synge identity.
Our proof largely follows~\cite{ern_vohralik_2015a} but is marginally modified,
since the statement we give below is slightly more general than the one given
in~\cite{ern_vohralik_2015a}.

\begin{theorem}[Prager--Synge identities]
For all $\GRAD \in \BL^2(\Omega)$, we have
\begin{equation}
\label{eq_standard_prager_synge}
\|\grad u-\GRAD\|_{\AAA,\Omega}^2
=
\min_{\substack{\bsig \in \BH_{\GN}(\ddiv,\Omega) \\ \div \bsig = f}}
\|\GRAD+\AAA^{-1}\bsig\|_{\AAA,\Omega}^2
+
\min_{s \in H^1_{\GD}(\Omega)} \|\GRAD-\grad s\|_{\AAA,\Omega}^2.
\end{equation}
\end{theorem}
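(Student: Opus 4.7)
The plan is to set $\be := \GRAD - \grad u$ and construct an $\AAA$-orthogonal Helmholtz-type decomposition of $\be$ adapted to the mixed boundary conditions. Specifically, I would define $w \in H^1_{\GD}(\Omega)$ as the unique solution of
\begin{equation*}
(\AAA \grad w, \grad v)_\Omega = (\AAA \be, \grad v)_\Omega \qquad \forall v \in H^1_{\GD}(\Omega),
\end{equation*}
which exists and is unique by Lax--Milgram since $\alpha_{\min,\Omega}>0$ and $\alpha_{\max,\Omega}<+\infty$. Then set $\btau := \AAA(\grad w - \be) \in \BL^2(\Omega)$. By construction $(\btau, \grad v)_\Omega = 0$ for every $v \in H^1_{\GD}(\Omega)$, so using the trace theory of~\cite{alonso_valli_1996a} on portions of the boundary, $\btau$ belongs to $\BH_{\GN}(\ddiv^0,\Omega)$. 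This yields the decomposition $\be = \grad w - \AAA^{-1}\btau$, which is $\AAA$-orthogonal because $(\AAA \grad w, \AAA^{-1}\btau)_\Omega = (\grad w, \btau)_\Omega = 0$ by the very characterization of $\BH_{\GN}(\ddiv^0,\Omega)$.

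Next, I would parametrize the two admissible sets using $u$. Since $\bsig^\star := -\AAA\grad u$ belongs to $\BH_{\GN}(\ddiv,\Omega)$ and satisfies $\div \bsig^\star = f$ thanks to~\eqref{eq_poisson_strong}, every $\bsig$ in the first feasibility set writes uniquely as $\bsig = \bsig^\star + \bsig_0$ with $\bsig_0 \in \BH_{\GN}(\ddiv^0,\Omega)$; similarly, every $s \in H^1_{\GD}(\Omega)$ writes as $s = u - t$ with $t \in H^1_{\GD}(\Omega)$. Substituting and using the decomposition of $\be$ gives
\begin{equation*}
\GRAD + \AAA^{-1}\bsig = \grad w + \AAA^{-1}(\bsig_0 - \btau),
\qquad
\GRAD - \grad s = \grad(w + t) - \AAA^{-1}\btau,
\end{equation*}
and in both right-hand sides the two summands are again $\AAA$-orthogonal (gradients of $\GD$-vanishing functions are $L^2$-orthogonal to $\GN$-divergence-free fields).

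Applying the Pythagorean identity and minimizing is then immediate: the first minimum is reached at $\bsig_0 = \btau$ and equals $\|\grad w\|_{\AAA,\Omega}^2$, while the second is reached at $t = -w$ and equals $\|\AAA^{-1}\btau\|_{\AAA,\Omega}^2$. Summing and invoking one last Pythagorean identity for $\be$ itself gives
\begin{equation*}
\|\grad w\|_{\AAA,\Omega}^2 + \|\AAA^{-1}\btau\|_{\AAA,\Omega}^2 = \|\be\|_{\AAA,\Omega}^2 = \|\grad u - \GRAD\|_{\AAA,\Omega}^2,
\end{equation*}
which is exactly~\eqref{eq_standard_prager_synge}.

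The only genuinely subtle point is the step $\btau \in \BH_{\GN}(\ddiv^0,\Omega)$: the orthogonality $(\btau,\grad v)_\Omega = 0$ readily yields $\div\btau = 0$ in $\Omega$, but the vanishing of the normal trace of $\btau$ on $\GN$ alone (rather than on all of $\partial\Omega$) requires that $H^1_{\GD}(\Omega)$ be large enough to detect arbitrary $H^{-1/2}$ data on $\GN$. This is precisely what is guaranteed by the framework of~\cite{alonso_valli_1996a}, and no assumption of simple connectedness or of a trivial boundary partition is needed.
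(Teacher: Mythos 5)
Your proof is correct, and it takes a genuinely different (and arguably more self-contained) route than the paper's. The paper also introduces the elliptic projection (its $s^\star$ equals your $u+w$), uses the $\AAA$-orthogonality of $\grad(u-s^\star)$ and $\GRAD-\grad s^\star$ to peel off the potential term, and then identifies $\min_{\bsig}\|\GRAD+\AAA^{-1}\bsig\|_{\AAA,\Omega}$ with $\|\grad(u-s^\star)\|_{\AAA,\Omega}$ by a two-sided duality argument: the easy inequality comes from testing with $v=u-s^\star$, while the converse is obtained by writing the Euler--Lagrange (saddle-point) system for the flux minimizer $\bsig^\star$ and exploiting its Lagrange multiplier $q\in H^1_{\GD}(\Omega)$ with $\grad q=\AAA^{-1}\bsig^\star+\GRAD$. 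You instead build the $\AAA$-orthogonal decomposition $\GRAD-\grad u=\grad w-\AAA^{-1}\btau$ up front, parametrize both feasible sets affinely around $-\AAA\grad u$ and $u$, and read off both minimizers explicitly ($\bsig_0=\btau$, $t=-w$); note that your $\btau=\AAA(\grad s^\star-\GRAD)$ is exactly the object the paper manufactures later, in the proof of the divergence-free maximization lemma. What your route buys is that you never need existence of the constrained flux minimizer or the inf-sup theory behind the saddle-point system --- only Lax--Milgram and the duality characterization of $\BH_{\GN}(\ddiv^0,\Omega)$; what it costs is the one subtle point you correctly flag, namely that $(\btau,\grad v)_\Omega=0$ for all $v\in H^1_{\GD}(\Omega)$ really does force the normal trace of $\btau$ to vanish on $\GN$, which is exactly the mixed-boundary trace framework of Alonso--Valli that the paper also relies on (implicitly, through its integration-by-parts identity $(\div\bsig,v)_\Omega=-(\bsig,\grad v)_\Omega$). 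Both arguments are sound; yours is the more constructive of the two.
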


\begin{proof}
We first introduce $s^\star$ as the unique element of $H^1_{\GD}(\Omega)$ such that
\begin{equation}
(\AAA\grad s^\star,\grad v)_\Omega = (\AAA\GRAD,\grad v)_\Omega
\end{equation}
for all $v \in H^1_{\GD}(\Omega)$, so that
\begin{align*}
\|\grad u -\GRAD\|_{\AAA,\Omega}^2
&=
\|\grad u-\grad s^\star\|_{\AAA,\Omega}^2
+
\|\GRAD-\grad s^\star\|_{\AAA,\Omega}^2
\\
&=
\|\grad(u-s^\star)\|_{\AAA,\Omega}^2
+
\min_{s \in H^1_{\GD}(\Omega)} \|\GRAD-\grad s\|_{\AAA,\Omega}^2.
\end{align*}

For all $\bsig \in \BH_{\GN}(\ddiv,\Omega)$ and $v \in H^1_{\GD}(\Omega)$, we have
\begin{equation*}
(\AAA\grad u,\grad v)_\Omega = (f,v)_\Omega = (\div \bsig,v)_\Omega = -(\bsig,\grad v)_\Omega
\end{equation*}
so that
\begin{equation*}
(\AAA \grad(u-s^\star),\grad v)_\Omega = -(\bsig+\AAA\GRAD,\grad v)_\Omega
\end{equation*}
and therefore
\begin{equation*}
\|\grad(u-s^\star)\|_{\AAA,\Omega} \leq \|\AAA^{-1}\bsig+\GRAD\|_{\AAA,\Omega},
\end{equation*}
by picking $v = u-s^\star$. It follows that
\begin{equation}
\label{tmp_min_prager_synge}
\|\grad(u-s^\star)\|_{\AAA,\Omega}
\leq
\min_{\substack{
\bsig \in \BH_{\GN}(\ddiv,\Omega)
\\
\div \bsig = f
}}
\|\AAA^{-1}\bsig+\GRAD\|_{\AAA,\Omega}
\end{equation}
since $\bsig$ was arbitrary. This proves the upper bound in~\eqref{eq_standard_prager_synge}.
For the converse bound, we need to analyze the minimizer $\bsig^\star$ in the right-hand side
of~\eqref{tmp_min_prager_synge}. The Euler-Lagrange equations associated with the minimization
problem are to find $\bsig^\star \in \BH_{\GN}(\ddiv,\Omega)$ and $q \in L^2_{\GN}(\Omega)$
such that
\begin{equation*}
\left \{
\begin{array}{rcl}
(\AAA^{-1}\bsig^\star,\bv)_\Omega + (q,\div \bv)_\Omega &=& -(\GRAD,\bv)_\Omega
\\
(\div \bsig^\star,r)_\Omega &=& (f,r)_\Omega
\end{array}
\right .
\end{equation*}
for all $\bv \in \BH_{\GN}(\ddiv,\Omega)$ and $r \in L^2_{\GN}(\Omega)$.
Using the definition of the weak gradient, we read from the
first equation that $q \in H^1_{\GD}(\Omega)$ with
\begin{equation*}
\grad q = \AAA^{-1}\bsig^\star+\GRAD,
\end{equation*}
and it remains to show that
\begin{equation*}
\|\grad q\|_{\AAA,\Omega}
\leq
\|\grad(u-s^\star)\|_{\AAA,\Omega}.
\end{equation*}
We have
\begin{equation*}
\|\grad q\|_{\AAA,\Omega}^2
=
(\AAA\grad q,\grad q)
=
(\bsig^\star,\grad q)+(\AAA\GRAD,\grad q).
\end{equation*}
On the one hand,
\begin{equation*}
(\bsig^\star,\grad q) = -(\div \bsig^\star,q)_\Omega = -(f,q)_\Omega = -(\AAA \grad u,\grad q)_\Omega
\end{equation*}
whereas on the other hand
\begin{equation*}
(\AAA\GRAD,\grad q) = (\AAA\grad s^\star,\grad q),
\end{equation*}
leading to
\begin{equation*}
\|\grad q\|_{\AAA,\Omega}^2
=
(\AAA\grad (s^\star-u),\grad q)
\leq
\|\grad (s^\star-u)\|_{\AAA,\Omega}
\|\grad q\|_{\AAA,\Omega},
\end{equation*}
which concludes the proof.
\end{proof}

\subsection{Generalization}

We now aim at rewriting the second term in the right-hand side of~\eqref{eq_standard_prager_synge}
in alternative ways. Our first result applies without any assumption on the domain, nor on the
field $\GRAD$.

\begin{lemma}[Divergence--free maximimization]
For all $\GRAD \in \BL^2(\Omega)$, we have
\begin{equation}
\label{eq_divfree_identity}
\min_{s \in H^1_{\GD}(\Omega)}
\|\GRAD-\grad s\|_{\AAA,\Omega}
=
\max_{\substack{\bxi \in \BH_{\GN}(\ddiv^0\Omega) \\ \|\bxi\|_{\AAA^{-1},\Omega} = 1}}
(\GRAD,\bxi)_\Omega.
\end{equation}
\end{lemma}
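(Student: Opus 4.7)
The proposed strategy is to view the left-hand side as the distance (in the $\AAA$-weighted inner product on $\BL^2(\Omega)$) from $\GRAD$ to the closed subspace $\grad H^1_{\GD}(\Omega)$, and then invoke the duality between orthogonal projection and linear functionals on the orthogonal complement. First I would introduce the minimizer $s^\star \in H^1_{\GD}(\Omega)$ on the left-hand side; its Euler--Lagrange equation $(\AAA(\GRAD - \grad s^\star), \grad v)_\Omega = 0$ for every $v \in H^1_{\GD}(\Omega)$ exactly says that the field $\brho \eq \AAA(\GRAD - \grad s^\star) \in \BL^2(\Omega)$ has distributional divergence zero in $\Omega$ and vanishing normal trace on $\GN$ (understood in the duality sense of \cite{alonso_valli_1996a}), so $\brho \in \BH_{\GN}(\ddiv^0,\Omega)$.

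For the ``$\geq$'' direction, a direct computation gives
\begin{equation*}
(\GRAD, \brho)_\Omega = (\GRAD - \grad s^\star, \brho)_\Omega + (\grad s^\star, \brho)_\Omega = (\AAA^{-1} \brho, \brho)_\Omega = \|\brho\|_{\AAA^{-1}, \Omega}^2,
\end{equation*}
where the cross term $(\grad s^\star, \brho)_\Omega$ vanishes by the integration-by-parts pairing of functions in $H^1_{\GD}(\Omega)$ against fields in $\BH_{\GN}(\ddiv^0,\Omega)$. Since $\|\brho\|_{\AAA^{-1}, \Omega} = \|\GRAD - \grad s^\star\|_{\AAA, \Omega}$, normalizing $\bxi \eq \brho / \|\brho\|_{\AAA^{-1}, \Omega}$ (the case $\brho = 0$ being trivial) yields an admissible competitor achieving the left-hand side value.

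For the ``$\leq$'' direction, given any admissible $\bxi$ and any $s \in H^1_{\GD}(\Omega)$, the same duality pairing shows $(\grad s, \bxi)_\Omega = 0$, so $(\GRAD, \bxi)_\Omega = (\GRAD - \grad s, \bxi)_\Omega$, which by Cauchy--Schwarz in the $\AAA$/$\AAA^{-1}$ dual pair is bounded by $\|\GRAD - \grad s\|_{\AAA, \Omega} \|\bxi\|_{\AAA^{-1}, \Omega} = \|\GRAD - \grad s\|_{\AAA, \Omega}$. Taking the infimum over $s$ and then the supremum over $\bxi$ closes the identity.

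The only delicate ingredient is the vanishing of $(\grad s, \bxi)_\Omega$ for $\bxi \in \BH_{\GN}(\ddiv^0,\Omega)$ and $s \in H^1_{\GD}(\Omega)$ when $\partial \Omega = \overline{\GD} \cup \overline{\GN}$ is a nontrivial decomposition: no boundary term is available on $\GD$ from $\bxi$, nor on $\GN$ from $s$, but each boundary piece must be handled through the dual pairing rather than a genuine $L^2$-trace. This is where the trace theory on portions of the boundary from \cite{alonso_valli_1996a}, already recalled in Section~\ref{section_setting}, provides the rigorous justification; everything else is standard Hilbert-space duality, and no assumption on the topology of $\Omega$ or $\partial\Omega$ is needed.
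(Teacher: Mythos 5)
Your proposal is correct and follows essentially the same route as the paper: identify the minimizer $s^\star$, observe that the residual $\AAA(\GRAD-\grad s^\star)$ lies in $\BH_{\GN}(\ddiv^0,\Omega)$, use the orthogonality $(\grad s,\bxi)_\Omega=0$ together with Cauchy--Schwarz in the $\AAA$/$\AAA^{-1}$ pairing, and exhibit the normalized residual as the maximizer. The only cosmetic difference is that you split the identity into two explicit inequalities where the paper compresses them into the dual-norm characterization of $\|\bsig^\star\|_{\AAA^{-1},\Omega}$; your remark on the $\brho=0$ case and on the trace pairing for mixed boundary conditions is a welcome extra precision, not a deviation.
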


\begin{proof}
For the sake of simplicity, we introduce
\begin{equation*}
s^\star \eq \arg \min_{s \in H^1_{\GD}(\Omega)} \|\GRAD-\grad s\|_{\AAA,\Omega}.
\end{equation*}
Classically, $s^\star$ is the only element of $H^1_{\GD}(\Omega)$ such that
\begin{equation}
\label{tmp_EL_sstar}
(\AAA\grad s^\star,\grad v)_\Omega = (\AAA\GRAD,\grad v)_\Omega
\end{equation}
for all $v \in H^1_{\GD}(\Omega)$. Let us set $\bsig^\star = \AAA(\grad s^\star-\GRAD)$.
Then, it follows from \eqref{tmp_EL_sstar} that $\bsig^\star \in \BH_{\GN}(\ddiv^0,\Omega)$.

On the one hand, by definition of $\bsig^\star$, we have
\begin{equation*}
\|\bsig^\star\|_{\AAA^{-1},\Omega}^2
=
(\AAA^{-1}\bsig^\star,\bsig^\star)_\Omega
=
(\grad s^\star-\GRAD,\AAA(\grad s^\star-\GRAD))_\Omega
=
\|\grad s^\star-\GRAD\|_{\AAA,\Omega}^2.
\end{equation*}
On the other hand, we have
\begin{equation*}
\|\bsig^\star\|_{\AAA^{-1},\Omega}
=
\sup_{\substack{\bxi \in \BH_{\GN}(\ddiv^0,\Omega) \\ \|\bxi\|_{\AAA^{-1},\Omega} = 1}}
(\AAA^{-1}\bsig^\star,\bxi)_\Omega
=
\sup_{\substack{\bxi \in \BH_{\GN}(\ddiv^0,\Omega) \\ \|\bxi\|_{\AAA^{-1},\Omega} = 1}}
(\grad s^\star-\GRAD,\bxi)_\Omega
=
\sup_{\substack{\bxi \in \BH_{\GN}(\ddiv^0,\Omega) \\ \|\bxi\|_{\AAA^{-1},\Omega} = 1}}
(\GRAD,-\bxi)_\Omega,
\end{equation*}
since $(\grad s^\star,\bxi)_\Omega = (s^\star,\div \bxi)_\Omega = 0$.
This shows \eqref{eq_divfree_identity} with a ``$\sup$'' instead of a ``$\max$''.
To show that the supremum is indeed achieved, we simply observe that the maximizer
is $\bxi = \bsig^\star/\|\bsig^\star\|_{\AAA^{-1},\Omega}$.
\end{proof}

The formulation in~\eqref{eq_divfree_identity} above is still not satisfactory,
since working with divergence-free functions is not convenient. We would rather
work with the curl of vector potentials. However, these two notions are only
equivalent if the topology of the domain is trivial. Here, we show that we
can still resort to vector potentials for general domains if we assume some
orthogonality conditions for the field $\GRAD$. Similar ideas have been used
in~\cite[Lemma 2]{bertrand_carstensen_grassle_tran_2023a}.

\begin{theorem}[Alternative reformulations]
\label{theorem_curl_identity}
Assume that $\GRAD \in \BL^2(\Omega)$ satisfies $(\GRAD,\bxi_h)_\Omega = 0$
for all $\bxi_h \in \RT_1(\CT_h) \cap \BH_{\GN}(\ddiv^0,\Omega)$. Then,
\begin{equation}
\label{eq_curl_bound}
\min_{s \in H^1_{\GD}(\Omega)}
\|\GRAD-\grad s\|_{\AAA,\Omega}
\leq (1+\gamma_h^2)^{1/2}
\max_{\substack{\bthe \in \BH_{\GN}(\ccurl,\Omega) \\ \|\curl \bthe\|_{\AAA^{-1},\Omega} = 1}}
(\GRAD,\curl \bthe)_\Omega.
\end{equation}
Besides, we have
\begin{equation}
\label{eq_curl_identity}
\max_{\substack{\bthe \in \BH_{\GN}(\ccurl,\Omega) \\ \|\curl \bthe\|_{\AAA^{-1},\Omega} = 1}}
(\GRAD,\curl \bthe)_\Omega
=
\min_{\bphi \in \BH_{\GD}(\ccurl^0,\Omega)} \|\GRAD-\bphi\|_{\AAA,\Omega}.
\end{equation}
\end{theorem}

\begin{remark}[Trivial topology]
In a trivial topology, \eqref{eq_curl_bound} holds with an equal sign
and $\gamma_h = 0$. This simply follows from
the identities $\grad H^1_{\GD}(\Omega) = \BH_{\GD}(\ccurl^0,\Omega)$
and $\curl \BH_{\GN}(\ccurl,\Omega) = \BH_{\GN}(\ddiv^0,\Omega)$ combined with
\eqref{eq_divfree_identity}. In this case, \eqref{eq_curl_bound} even
holds true without the orthogonality assumption on $\GRAD$.
\end{remark}

\begin{proof}
Consider an arbitrary $\bxi \in \BH_{\GN}(\ddiv^0,\Omega)$.
We employ the orthogonal decomposition
\begin{equation*}
\bxi = \curl \bthe + \lba
\end{equation*}
with $\bthe \in \BH_{\GN}(\ccurl,\Omega)$ and $\lba \in\LBA$
from~\eqref{eq_cohomology}. We also denote by
$\lba_h \in \RT_1(\CT_h) \cap \BH_{\GN}(\ddiv^0,\Omega)$
the minimizer in the definition of $\gamma_h$ in~\eqref{eq_gamma}.
Then, there exists $\bthe^h \in \BH_{\GN}(\ccurl,\Omega)$ such that
\begin{equation*}
\bxi = \curl \bthe^h + \lba_h,
\end{equation*}
and we have
\begin{align*}
\|\curl \bthe^h\|_{\AAA^{-1},\Omega}
&\leq
\|\curl \bthe\|_{\AAA^{-1},\Omega}
+
\|\curl (\bthe-\bthe^h)\|_{\AAA^{-1},\Omega}
\\
&=
\|\curl \bthe\|_{\AAA^{-1},\Omega}
+
\|\lba-\lba_h\|_{\AAA^{-1},\Omega}
\\
&\leq
\|\curl \bthe\|_{\AAA^{-1},\Omega}
+
\gamma_h
\|\lba\|_{\AAA^{-1},\Omega}
\\
&\leq
\sqrt{1+\gamma_h^2}
\|\bxi\|_{\AAA^{-1},\Omega},
\end{align*}
since the decomposition of $\bxi$ is orthogonal.

Recalling our assumption on $\GRAD$, we have
\begin{align*}
(\GRAD,\bxi)_\Omega = (\GRAD,\curl \bthe^h)_\Omega
&
\leq
\max_{\substack{\bthe \in \BH_{\GN}(\ccurl,\Omega) \\ \|\curl \bthe\|_{\AAA^{-1},\Omega} = 1}}
(\GRAD,\curl \bthe)_\Omega
\|\curl \bthe^h\|_{\AAA^{-1},\Omega}
\\
&\leq
\sqrt{1+\gamma_h^2}
\max_{\substack{\bthe \in \BH_{\GN}(\ccurl,\Omega) \\ \|\curl \bthe\|_{\AAA^{-1},\Omega} = 1}}
(\GRAD,\curl \bthe)_\Omega
\|\bxi\|_{\AAA^{-1},\Omega},
\end{align*}
and since $\bxi$ was arbitrary,~\eqref{eq_curl_bound} follows.

The identity in~\eqref{eq_curl_identity} follows from a usual duality principle. We first
note that the case where either term vanishes simply follows from integration
by parts. We can therefore assume without loss of generality that both terms
are positive. We then consider any projection $\bThe^\star \in \BH_{\GN}(\ccurl,\Omega)$
such that
\begin{equation*}
(\AAA^{-1}\curl \bThe^\star,\curl \bv)_\Omega
=
(\AAA^{-1}(\AAA\GRAD),\curl \bv)_\Omega
\end{equation*}
for all $\bv \in \BH_{\GN}(\ccurl,\Omega)$. Such a $\bThe^\star$ is defined
up to a curl-free vector field that could be fixed by a gauge condition.
The Cauchy--Schwarz inequality ensures that
\begin{equation*}
\max_{\substack{\bthe \in \BH_{\GN}(\ccurl,\Omega) \\ \|\curl \bthe\|_{\AAA^{-1},\Omega} = 1}}
(\AAA^{-1}\curl \bThe^\star,\curl \bthe)_\Omega
=
(\AAA^{-1}\curl \bThe^\star,\curl \bthe^\star)_\Omega
=
\|\bThe^\star\|_{\AAA^{-1},\Omega},
\end{equation*}
with $\bthe^\star \eq \bThe^\star/\|\curl \bThe^\star\|_{\AAA^{-1},\Omega}$.
Note that $\curl \bThe^\star \neq \bzero$ by assumption here, so that the
definition of $\bthe^\star$ is sound. We then conclude that
\begin{equation*}
\max_{\substack{\bthe \in \BH_{\GN}(\ccurl,\Omega) \\ \|\curl \bthe\|_{\AAA^{-1},\Omega} = 1}}
(\GRAD,\curl \bthe)_\Omega
=
(\GRAD,\curl \bthe^\star)_\Omega
=
(\GRAD-\bphi,\curl \bthe^\star)_\Omega,
\leq
\|\AAA(\GRAD-\bphi)\|_{\AAA^{-1},\Omega}\|\curl \bthe^\star\|_{\AAA^{-1},\Omega}
\end{equation*}
and in particular that
\begin{equation*}
\max_{\substack{\bthe \in \BH_{\GN}(\ccurl,\Omega) \\ \|\curl \bthe\|_{\AAA^{-1},\Omega} = 1}}
(\GRAD,\curl \bthe)_\Omega
\leq
\|\GRAD-\bphi\|_{\AAA,\Omega}
\end{equation*}
for all $\bphi \in \BH_{\GD}(\ccurl^0,\Omega)$,
since $\|\curl \bthe^\star\|_{\AAA^{-1},\Omega} = 1$.
The converse inequality is trivial since
$\grad H^1_{\GD}(\Omega) \subset \BH_{\GD}(\ccurl^0,\Omega)$.
This proves~\eqref{eq_curl_identity}.
\end{proof}

As a direct consequence of the above reformulations, we can
recast the Prager--Synge identity in~\eqref{eq_standard_prager_synge}
into two alternative formulations that are respectively suited for
residual-based and equilibrated error estimators.

\begin{corollary}[Generalized Prager--Synge estimates]
\label{corollary_generalized_prager_synge}
For all $\GRAD \in \BL^2(\Omega)$ such that $(\GRAD,\bxi_h)_\Omega = 0$
for all $\bxi_h \in \RT_1(\CT_h) \cap \BH_{\GN}(\ddiv^0,\Omega)$, we have
\begin{subequations}
\begin{multline}
\label{eq_generalized_prager_synge}
\|\grad u-\GRAD\|_{\AAA,\Omega}^2
\leq
\max_{\substack{v \in H^1_{\GD}(\Omega) \\ \|\grad v\|_{\AAA,\Omega} = 1}}
|(f,v)_\Omega-(\AAA\GRAD,\grad v)_\Omega|^2
+
\\
(1+\gamma_h^2)^{1/2}
\max_{\substack{\bthe \in \BH_{\GN}(\ccurl,\Omega) \\ \|\curl \bthe\|_{\AAA^{-1},\Omega} = 1}}
|(\GRAD,\curl \bthe)_\Omega|^2
\end{multline}
and
\begin{multline}
\label{eq_generalized_prager_synge_equilibrated}
\|\grad u-\GRAD\|_{\AAA,\Omega}^2
\leq
\min_{\substack{\bsig \in \BH_{\GN}(\ddiv,\Omega) \\ \div \bsig = f}}
\|\GRAD+\AAA^{-1}\bsig\|_{\AAA,\Omega}^2
\\
+
(1+\gamma_h^2)^{1/2}
\min_{\bphi \in \BH_{\GD}(\ccurl^0,\Omega)} \|\GRAD-\bphi\|_{\AAA,\Omega}^2.
\end{multline}
\end{subequations}
\end{corollary}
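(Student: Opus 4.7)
The plan is to reduce both identities to the standard Prager--Synge identity~\eqref{eq_standard_prager_synge} by separately reformulating each of its two minima. The hypothesis that $(\GRAD,\bxi_h)_\Omega = 0$ for all $\bxi_h \in \RT_1(\CT_h) \cap \BH_{\GN}(\ddiv^0,\Omega)$ is precisely what is needed to invoke Theorem~\ref{theorem_curl_identity} on the non-conformity term. The equilibrated identity~\eqref{eq_generalized_prager_synge_equilibrated} then follows essentially at once: the first term on the right-hand side of~\eqref{eq_standard_prager_synge} is already in the required form, while the second term is recast using the second equality of~\eqref{eq_curl_identity}. Squaring and summing yields the claim.

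For the residual identity~\eqref{eq_generalized_prager_synge}, I would dualize both terms. The second term becomes a maximum over $\bthe \in \BH_{\GN}(\ccurl,\Omega)$ via the first equality of~\eqref{eq_curl_identity}. For the first term, I would reuse an intermediate step from the proof of~\eqref{eq_standard_prager_synge} which identifies
\begin{equation*}
\min_{\substack{\bsig \in \BH_{\GN}(\ddiv,\Omega) \\ \div \bsig = f}} \|\GRAD+\AAA^{-1}\bsig\|_{\AAA,\Omega} = \|\grad(u-s^\star)\|_{\AAA,\Omega},
\end{equation*}
where $s^\star \in H^1_{\GD}(\Omega)$ is the minimizer in the second term of~\eqref{eq_standard_prager_synge}. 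Subtracting the weak formulations satisfied by $u$ and $s^\star$ gives $(\AAA\grad(u-s^\star),\grad v)_\Omega = (f,v)_\Omega - (\AAA\GRAD,\grad v)_\Omega$ for all $v \in H^1_{\GD}(\Omega)$, so $\|\grad(u-s^\star)\|_{\AAA,\Omega}$ coincides with the dual norm of the linear functional $v \mapsto (f,v)_\Omega - (\AAA\GRAD,\grad v)_\Omega$ on $H^1_{\GD}(\Omega)$ equipped with $\|\grad\cdot\|_{\AAA,\Omega}$. Riesz--Fr\'echet duality then gives this dual norm as the desired maximum, and squaring and summing the two reformulations completes the proof.

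There is no serious technical obstacle: the argument is essentially bookkeeping combining~\eqref{eq_standard_prager_synge}, Theorem~\ref{theorem_curl_identity}, and a standard Riesz duality. The only minor points of care are to verify that the suprema are attained (in each case the maximizer is explicit, being proportional to the corresponding primal minimizer) and to check that the absolute values in~\eqref{eq_generalized_prager_synge} cause no issue, which follows from linearity of the functionals together with sign-invariance of the normalization constraints.
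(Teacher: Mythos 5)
Your proposal is correct and follows essentially the same route the paper intends: the corollary is presented there as a direct consequence of combining the standard Prager--Synge identity~\eqref{eq_standard_prager_synge} with Theorem~\ref{theorem_curl_identity}, and your substitution of the two reformulations, together with the dual-norm characterization $\min_{\div\bsig=f}\|\GRAD+\AAA^{-1}\bsig\|_{\AAA,\Omega}=\|\grad(u-s^\star)\|_{\AAA,\Omega}=\max_v|(f,v)_\Omega-(\AAA\GRAD,\grad v)_\Omega|$ already established in the proof of~\eqref{eq_standard_prager_synge}, is exactly the required bookkeeping. Your remarks on attainment of the suprema and on the harmlessness of the absolute values (by sign symmetry of the constraint sets) are also correct.
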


\section{Residual-based estimators}
\label{section_residual}

In this section, we first leverage the generalized Prager--Synge estimate
established in~\eqref{eq_generalized_prager_synge} to design residual-based
error estimators.

\subsection{Tools}

We first review a set of standard tools from the literature that are instrumental
for the analysis of residual-based estimators.

\subsubsection{Face patches}

The following notation for patches of elements will be useful.
If $F = \partial K_- \cap \partial K_+ \in \CFIh$, then $\omF$
is the open set such that $\overline{\omF} = \overline{K_-} \cup \overline{K_+}$,
and similarly, we let $\omF \eq K$ for an exterior face $F \in \CF_h$
with $F \subset \partial K \cap \partial \Omega$. For $K \in \CT_h$,
we then employ the notation $\omKF \eq \cup_{F \in \CF(K)} \omF$.

\subsubsection{Bubble functions}
\label{section_bubble}

We will also use ``bubble functions'' in the efficiency proofs.
This is a standard technique (see~\cite{ainsworth_oden_2000a,verfurth_2013a}),
and here, we use the polynomial-degree-explicit bounds from~\cite{melenk_wohlmuth_2001a}.
The results in~\cite{melenk_wohlmuth_2001a} are explicitly established
for two-dimensional problems, but the results extend without change for
three-dimensional configurations, see e.g.~\cite[Theorem 4.1.2]{dorfler_sauter_2013a}.

For all $K \in \CT_h$ there exists a ``bubble'' function $b_K: K \to \mathbb{R}$
such that for all $q \geq 1$ and $v_q \in \CP_q(K)$, we have
$b_K v_q = 0$ on $\partial K$,
\begin{subequations}
\label{eq_bubble_K_scalar}
\begin{equation}
\|v_q\|_K \leq C(\kappa_{K}) q \|b_K^{1/2}v_q\|_K
\end{equation}
and
\begin{equation}
\|\grad (b_Kv_q)\|_K \leq C(\kappa_K)\frac{q}{h_K} \|b_K^{1/2}v_q\|_K,
\end{equation}
see \cite[Theorem 2.5]{melenk_wohlmuth_2001a}.
\end{subequations}
Similarly, for all $F \in \CF_h$, there exists a function
$b_F: F \to \mathbb{R}$ such that for all $q \geq 1$ and $w_q \in \CP_q(F)$
\begin{equation}
\label{eq_bubble_F_scalar}
\|w_q\|_F \leq C(\kappa_{\omF}) q \|b_F^{1/2}w_q\|_F.
\end{equation}
In addition, there exists an extension operator
$\LE_F: \CP_q(F) \to H^{1}(\omF)$
such that $\LE_F(w_q) = b_F w_q$ on $F$,
$\LE_F(w_q) = 0$ on $\partial \omF \setminus F$, and
\begin{equation}
\label{eq_stability_extension_scalar}
q h_F^{-1/2} \|\LE_F(w_q)\|_{\omega_F}
+
q^{-1} h_F^{1/2}\|\grad \LE_F(w_q)\|_{\omega_F}
\leq
C(\kappa_{\omF}) \|b_F^{1/2}w_q\|_F
\end{equation}
for all $w_q \in \CP_q(F)$. These results follow from applying
\cite[Lemma 2.6]{melenk_wohlmuth_2001a} with $\alpha = 1$ and
$\varepsilon = q^{-2}$.

For vector-valued functions, we can apply the extension operator
and the above inequalities component-wise. As a result,
for all $\bv_q \in \BCP_q(K)$, we have $b_K \bv_q = \bzero$
on $\partial K$,
\begin{subequations}
\label{eq_bubble_K}
\begin{equation}
\|\bv_q\|_K \leq C(\kappa_K) q \|b_K^{1/2}\bv_q\|_K
\end{equation}
and
\begin{equation}
\|\curl (b_K\bv_q)\|_K \leq C(\kappa_K)\frac{q}{h_K} \|b_K^{1/2}\bv_q\|_K.
\end{equation}
\end{subequations}
Similarly, for all $\bw_q \in \BCP_q(F)$, we have
\begin{equation}
\label{eq_bubble_F}
\|\bw_q\|_F \leq C(\kappa_{\omF}) q \|b_F^{1/2}\bw_q\|_F
\end{equation}
and $\LE_F(\bw_q) \times \bn_F = b_F \bw_q \times \bn_F$ on $F$,
$\LE_F(\bw_q) \times \bn_{\omF} = \bzero$ on $\partial \omF \setminus F$, with
\begin{equation}
\label{eq_stability_extension}
q h_F^{-1/2} \|\LE_F(\bw_q)\|_{\omega_F}
+
q^{-1} h_F^{1/2}\|\curl \LE_F(\bw_q)\|_{\omega_F}
\leq
C(\kappa_{\omF}) \|b_F^{1/2}\bw_q\|_F.
\end{equation}

\subsubsection{Quasi-interpolation operator}
\label{section_quasi_interpolation}

Next, we will need suitable interpolation operators for the reliability
estimate. Following \cite[Theorem 3.3]{karkulik_melenk_2015a},
for all $q \geq 1$, there exists a quasi-interpolation operator
$\CI_{hq}: H^1_{\GD}(\Omega) \to \CP_q(\CT_h) \cap H^1_{\GD}(\Omega)$ such that
\begin{equation}
\label{eq_quasi_interpolation_scalar}
\frac{q}{h_K}
\|v-\CI_{hq}v\|_K
+
\sqrt{\frac{q}{h_K}} \|v-\CI_{hq}v\|_{\partial K}
\leq
C(\kappa_{\omK})
\left (
\ell_\Omega^{-1}\|v\|_{\omK}
+
\|\grad v\|_{\omK}
\right )
\end{equation}
for all $v \in H^1_{\GD}(\Omega)$, where $\ell_\Omega$ denotes the diameter of $\Omega$.
Similarly, for vector-valued functions, we can construct
$\CJ_{hq}: \BH^1_{\GN}(\Omega) \to \ND_q(\CT_h) \cap \BH_{\GN}(\ccurl,\Omega)$
such that
\begin{equation}
\label{eq_quasi_interpolation}
\frac{q}{h_K}
\|\bpsi-\CJ_{hq}\bpsi\|_K
+
\sqrt{\frac{q}{h_K}} \|(\bpsi-\CJ_{hq}\bpsi) \times \bn_K\|_{\partial K}
\leq
C(\kappa_{\omK})
\left (
\ell_\Omega^{-1}\|\bpsi\|_{\omK}
+
\|\tgrad \bpsi\|_{\omK}
\right )
\end{equation}
for all $\bpsi \in \BH^1_{\GN}(\Omega)$. Indeed, if $q \geq 2$, we can define
$\CJ_{hq}$ by using $\CI_{h(q-1)}$ component-wise (with boundary conditions
imposed on $\GN$ rather than on $\GD$), and if $q = 1$, a suitable operator
is designed in~\cite{hiptmair_pechstein_2017a}.

\subsubsection{Regular decomposition}
\label{section_decomposition_schoberl}

\begin{subequations}
In order to employ the quasi-interpolation operator $\CJ_{hq}$ introduced above,
it will be crucial to replace a general function $\bthe \in \BH_{\GN}(\ccurl,\Omega)$
by a function $\bpsi \in \BH^1_{\GN}(\Omega)$ with the same curl.
Specifically, we will employ \cite[Theorem 1]{schoberl_2008a}, stating
that there exists an operator
$S_h: \BH_{\GN}(\ccurl,\Omega) \to \ND_1(\CT_h) \cap \BH_{\GN}(\ccurl,\Omega)$
such that for all $\bthe \in \BH_{\GN}(\ccurl,\Omega)$,
\begin{equation}
\label{eq_decomposition_schoberl}
\bthe-S_h\bthe = \grad q + \bpsi
\end{equation}
with $q \in H^1_{\GN}(\Omega)$, $\bpsi \in \BH^1_{\GN}(\Omega)$ and
\begin{equation}
\label{eq_stability_schoberl}
h_K^{-1} \|\bpsi\|_K + \|\tgrad \bpsi\|_K \leq C(\kappa_{\tomK}) \|\curl \bthe\|_{\tomK}.
\end{equation}
\end{subequations}

\subsubsection{Inverse inequalities}

When dealing with the IPDG scheme, we will need the following results to show that
our new error estimator is smaller than the standard one. For $K \in \CT_h$ and $F \in \CF_h$,
the estimates
\begin{subequations}
\begin{equation}
\label{eq_inverse_K}
\|\grad v_q\|_K \leq C(\kappa_K) \frac{q^2}{h_K}\|v_q\|_K \qquad \forall v_q \in \CP_q(K)
\end{equation}
and
\begin{equation}
\label{eq_inverse_F}
\|\grad_F w_q\|_F \leq C(\kappa_{\omF}) \frac{q^2}{h_F}\|w_q\|_F
\qquad
\forall w_q \in \CP_q(F)
\end{equation}
are established for all $q \geq 0$ in~\cite[Theorem 4.76]{schwab_1998a}.
By combining~\eqref{eq_inverse_K} with a standard multiplicative
trace inequality, for all $K \in \CT_h$ and $F \subset \CF(K)$, we also have
\begin{equation}
\label{eq_inverse_trace}
\|v_q\|_F^2 \leq C(\kappa_K) \frac{q^2}{h_K}\|v_q\|_K^2 \qquad \forall v_q \in \CP_q(K)
\end{equation}
for all $q \geq 1$, see again~\cite[Theorem 4.76]{schwab_1998a}.
\end{subequations}

\subsection{The new error estimator}

The first term in the right-hand side of the Prager--Synge identity
in~\eqref{eq_standard_prager_synge} is usually controlled in the literature
by the construction of an equilibrated flux, or with residual terms of the form
\begin{equation*}
\frac{h_K}{p}\|\div (\AAA\GRAD)+f\|_K
+
\sqrt{\frac{h_K}{p}} \|\jmp{\AAA\GRAD} \cdot \bn_K\|_{\partial K \setminus \GD}
\end{equation*}
in each element $K \in \CT_h$, where $p$ is the polynomial degree associated with
the numerical method. This term is fine, and leads to satisfactory efficiency estimate.

Here, we rather focus on the second term in the right-hand side
of~\eqref{eq_standard_prager_synge}. In the literature, this term is either estimated by
constructing a potential as in~\cite{ainsworth_2007a,ainsworth_2008a,ern_vohralik_2015a,kim_2007a},
or by exploiting the fact that $\GRAD$ is a broken gradient of scalar potential and measuring
the jump of the potential across mesh
faces following~\cite{cangiani_dong_georgoulis_2023a,houston_schotzau_wihler_2006a,karakashian_pascal_2003a}.
The key issue is that neither approach leads to efficiency estimates that solely involve
$\grad u-\GRAD$. Here, we instead employ a residual term of the form residual terms of the form
\begin{equation*}
\frac{h_K}{p}\|\curl \GRAD\|_K
+
\sqrt{\frac{h_K}{p}} \|\jmp{\GRAD} \times \bn_K\|_{\partial K \setminus \GN}
\end{equation*}
that is still reliable, but leads to sharper lower bounds.

\subsection{Abstract reliability}
\label{section_res_abstract_rel}

We start with a reliability result under abstract assumptions on $\GRAD$.
We first focus on the second term in the Prager-Synge estimate measuring
the non-conformity of the discrete gradient, which is the new term in our
estimator.

\begin{lemma}[Abstract reliability]
\label{lemma_res_abstract_rel}
Let $\GRAD \in \BL^2(\Omega)$ and $q \geq 1$.
If $(\GRAD,\bxi_h)_\Omega = 0$ for all $\bxi_h \in \RT_q(\CT_h) \cap \BH_{\GN}(\ddiv^0,\Omega)$,
then
\begin{multline}
\label{eq_rel_res}
\max_{\substack{\bthe \in \BH_{\GN}(\ccurl,\Omega) \\ \|\curl \bthe\|_{\AAA^{-1},\Omega} = 1}}
|(\GRAD,\curl \bthe)_\Omega|^2
\leq
\\
C(\kappa_{\CT_h})^2
\sum_{K \in \CT_h}
\alpha_{\max,\ttomK}
\left \{
\left (\frac{h_K}{q}\right )^2 \|\curl \GRAD\|_K^2
+
\frac{h_K}{q} \|\jmp{\GRAD} \times \bn_F\|_{\partial K \setminus \GN}^2
\right \}.
\end{multline}
\end{lemma}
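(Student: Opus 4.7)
The plan is to exploit the orthogonality assumption \emph{twice}: once to kill the regular part of $\bthe$ coming from Sch\"oberl's regular decomposition~\eqref{eq_decomposition_schoberl}, and once more to discount a quasi-interpolant $\CJ_{hq}$ of its remainder. The first application of the orthogonality is what makes the argument work for a generic $\bthe \in \BH_{\GN}(\ccurl,\Omega)$ (which need not be piecewise smooth), while the second is what injects the $q^{-1}$ decay into the final bound. A single application would at best yield a suboptimal estimate with $h_K^{1/2}$ and $h_K$ factors but no polynomial-degree decay.

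Concretely, for $\bthe$ with $\|\curl\bthe\|_{\AAA^{-1},\Omega} = 1$, I write $\bthe = S_h\bthe + \grad q + \bpsi$ with $\bpsi \in \BH^1_{\GN}(\Omega)$. Since $\curl\grad q = 0$ and $\curl S_h\bthe \in \RT_1(\CT_h) \cap \BH_{\GN}(\ddiv^0,\Omega) \subset \RT_q(\CT_h) \cap \BH_{\GN}(\ddiv^0,\Omega)$ (using $\curl\ND_1 \subset \RT_1$), the hypothesis gives $(\GRAD,\curl\bthe)_\Omega = (\GRAD,\curl\bpsi)_\Omega$. Next, $\CJ_{hq}\bpsi \in \ND_q(\CT_h) \cap \BH_{\GN}(\ccurl,\Omega)$, so $\curl\CJ_{hq}\bpsi$ still lies in $\RT_q(\CT_h) \cap \BH_{\GN}(\ddiv^0,\Omega)$, and a second appeal to the hypothesis yields
\[
(\GRAD,\curl\bthe)_\Omega = (\GRAD, \curl(\bpsi - \CJ_{hq}\bpsi))_\Omega.
\]
Element-wise integration by parts, noting that $\bpsi - \CJ_{hq}\bpsi$ has vanishing tangential trace on $\GN$, then recasts the right-hand side as
\[
\sum_{K \in \CT_h}(\curl\GRAD,\bpsi - \CJ_{hq}\bpsi)_K + \sum_{F \in \CFIh \cup \CFDh}(\jmp{\GRAD}\times\bn_F,\bpsi - \CJ_{hq}\bpsi)_F.
\]

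To finish, I apply Cauchy--Schwarz on every element and every face, then chain~\eqref{eq_quasi_interpolation} (which produces factors $h_K/q$ on elements and $\sqrt{h_K/q}$ on faces, with right-hand side $\ell_\Omega^{-1}\|\bpsi\|_\omK + \|\tgrad\bpsi\|_\omK$) with the Sch\"oberl bound~\eqref{eq_stability_schoberl} (which controls $\|\tgrad\bpsi\|_\omK$, and, using $h_K \leq \ell_\Omega$ together with the Poincar\'e-style estimate $\|\bpsi\|_K \lesssim h_K \|\curl\bthe\|_{\tomK}$, also the lower-order term) by $\|\curl\bthe\|_{\ttomK}$. Converting $\|\curl\bthe\|_{\ttomK} \leq \sqrt{\alpha_{\max,\ttomK}}\,\|\curl\bthe\|_{\AAA^{-1},\ttomK} \leq \sqrt{\alpha_{\max,\ttomK}}$ and exploiting the finite overlap of the patches $\ttomK$ (controlled by $\kappa_{\CT_h}$) delivers~\eqref{eq_rel_res}.

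The main conceptual hurdle is recognizing that the orthogonality has to be used \emph{twice} to reach the optimal $q$-scaling, and that the two tools are compatible because $\curl\ND_q \subset \RT_q$ is precisely what keeps the discrete curl of the quasi-interpolant inside the orthogonality kernel. The technical obstacle is the bookkeeping of nested patches $\omK \subset \tomK \subset \ttomK$ that appears when composing~\eqref{eq_quasi_interpolation} (whose right-hand side lives on $\omK$) with~\eqref{eq_stability_schoberl} (whose right-hand side lives on $\tomK$), which is why the final estimate is naturally stated with $\alpha_{\max,\ttomK}$.
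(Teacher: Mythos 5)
Your proposal is correct and follows essentially the same route as the paper's proof: the Sch\"oberl decomposition to reduce to $\bpsi \in \BH^1_{\GN}(\Omega)$, a second use of the orthogonality to subtract $\CJ_{hq}\bpsi$, element-wise integration by parts, and the chaining of~\eqref{eq_quasi_interpolation} with~\eqref{eq_stability_schoberl} followed by a finite-overlap summation. Nothing is missing.
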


\begin{proof}
Consider an arbitrary $\bthe \in \BH_{\GN}(\ccurl,\Omega)$.
Recalling the discussion in Section \ref{section_decomposition_schoberl}
and our assumption on $\GRAD$, since
$\curl (S_h\bthe) \in \RT_1(\CT_h) \cap \BH_{\GN}(\ddiv^0,\Omega)$,
we have
\begin{equation*}
(\GRAD,\curl\bthe)_\Omega
=
(\GRAD,\curl(\bthe-S_h\bthe))_\Omega
=
(\GRAD,\curl \bpsi)_\Omega
\end{equation*}
for $\bpsi \in \BH^1_{\GN}(\Omega)$ satisfying \eqref{eq_stability_schoberl}.
Similarly, since $\CJ_{hq} \bpsi \in \ND_q(\CT_h) \cap \BH_{\GN}(\ccurl,\Omega)$,
we have $\curl(\CJ_{hq}\bpsi) \in \RT_q(\CT_h) \cap \BH_{\GN}(\ddiv^0,\Omega)$,
and due to our assumption of $\GRAD$, we can further write that
\begin{equation*}
(\GRAD,\curl \bthe)_\Omega
=
(\GRAD,\curl \widetilde \bpsi)_\Omega
\end{equation*}
with $\widetilde \bpsi = \bpsi-\CJ_{hq}\bpsi$.

We perform standard integration by parts to show that
\begin{align*}
(\GRAD,\curl \bthe)_\Omega
&=
\sum_{K \in \CT_h}
\left \{
(\curl \GRAD,\widetilde \bpsi)_K
+
(\GRAD \times \bn_K,\widetilde \bpsi)_{\partial K}
\right \}
\\
&=
\sum_{K \in \CT_h}
(\curl \GRAD,\widetilde \bpsi)_K
+
\sum_{F \in \CF_h}
(\jmp{\GRAD} \times \bn_F,\widetilde \bpsi)_{\partial K},
\\
&\leq
\sum_{K \in \CT_h}
\left \{
\|\curl \GRAD\|_K\|\widetilde \bpsi\|_K
+
\|\jmp{\GRAD} \times \bn\|_{\partial K \setminus \GN}
\|\widetilde \bpsi \times \bn\|_{\partial K}
\right \}
\end{align*}
and continue with Cauchy-Schwarz inequalities as follows
\begin{align*}
(\GRAD,\curl \bthe)_\Omega
&\leq
\sum_{K \in \CT_h}
\alpha_{\max,\ttomK}^{1/2}
\left \{
\frac{h_K}{q}\|\curl \GRAD\|_K
+
\sqrt{\frac{h_K}{q}} \|\jmp{\GRAD} \times \bn\|_{\partial K \setminus \GN}
\right \},
\\
&\qquad \qquad \times
\alpha_{\max,\ttomK}^{-1/2}
\left \{
\frac{q}{h_K}\|\widetilde \bpsi\|_K
+
\sqrt{\frac{q}{h_K}}
\|\widetilde \bpsi \times \bn\|_{\partial K}
\right \}
\\
&\leq
\sqrt{2}
\left (
\sum_{K \in \CT_h}
\alpha_{\max,\ttomK}
\left \{
\left (\frac{h_K}{q}\right )^2\|\curl \GRAD\|_K^2
+
\frac{h_K}{q} \|\jmp{\GRAD} \times \bn\|_{\partial K \setminus \GN}^2
\right \}
\right )^{1/2}
\\
&\times
\left (
\sum_{K \in \CT_h}
\alpha_{\max,\ttomK}
\left \{
\frac{q}{h_K}\|\widetilde \bpsi\|_K
+
\sqrt{\frac{q}{h_K}}
\|\widetilde \bpsi \times \bn\|_{\partial K}
\right \}^2
\right )^{1/2}.
\end{align*}

At that point, it remains to estimate the second sum.
To do so, we consider a fixed element $K \in \CT_h$.
Using \eqref{eq_quasi_interpolation} and \eqref{eq_stability_schoberl}, we have
\begin{align*}
\frac{q}{h_K}\|\widetilde \bpsi\|_K
+
\sqrt{\frac{q}{h_K}}
\|\widetilde \bpsi \times \bn\|_{\partial K}
&\leq
C(\kappa_{\CT_h})
(\ell_\Omega^{-1} \|\bpsi\|_{\omega_K}+\|\tgrad \bpsi\|_{\omega_K})
\\
&\leq
C(\kappa_{\CT_h})
\sum_{\substack{K' \in \CT_h \\ K \subset \omega_K}}
(h_K^{-1} \|\bpsi\|_{K'}+\|\tgrad \bpsi\|_{K'})
\\
&\leq
C(\kappa_{\CT_h})
\sum_{\substack{K' \in \CT_h \\ K \subset \omega_K}}
\|\curl \bthe\|_{\tomK},
\end{align*}
leading to
\begin{align*}
\alpha_{\max,\ttomK}^{-1/2}
\left \{
\frac{q}{h_K}\|\widetilde \bpsi\|_K
+
\sqrt{\frac{q}{h_K}}
\|\widetilde \bpsi \times \bn\|_{\partial K}
\right \}
&\leq
C(\kappa_{\CT_h})
\alpha_{\max,\ttomK}^{-1/2}
\sum_{\substack{K' \in \CT_h \\ K \subset \omega_K}}
\|\curl \bthe\|_{\tomK}
\\
&\leq
C(\kappa_{\CT_h})
\|\curl \bthe\|_{\AAA^{-1},\ttomK},
\end{align*}
and, due to a bounded number of overlaps, \eqref{eq_rel_res} follows by summation.
\end{proof}

Although the first two terms of the estimator are standard, for completeness,
we still analyze them here. For conforming discretizations, the treatment of
these terms can be bound, e.g.,
in~\cite{ainsworth_oden_2000a,melenk_wohlmuth_2001a,verfurth_2013a}.

\begin{lemma}[Absract reliability for the conforming part of the estimator]
\label{lemma_res_rel_div}
Let $\GRAD \in \BL^2(\Omega)$ such that $(\AAA\GRAD,\grad v_h)_\Omega = (f,v_h)_\Omega$
for all $v_h \in \CP_q(\CT_h) \cap H^1_{\GD}(\Omega)$ for some $q \geq 1$. Then, we have
\begin{multline}
\label{eq_rel_res_div}
\sup_{\substack{v \in H^1_{\GD}(\Omega) \\ \|\grad v\|_{\AAA,\Omega} = 1}}
|(f,v)_\Omega-(\AAA\GRAD,\grad v)_\Omega|^2
\leq
C(\kappa_{\CT_h})^2
\\
\sum_{K \in \CT_h}
\alpha_{\min,\ttomK}^{-1}
\left \{
\left (\frac{h_K}{q}\right )^2
\|\div(\AAA\GRAD)+f\|_K^2
+
\frac{h_K}{q}
\|\jmp{\AAA\GRAD}\cdot \bn_K\|_{\partial K \setminus \GD}^2
\right \}.
\end{multline}
\end{lemma}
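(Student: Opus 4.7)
The strategy is the classical one for residual-based reliability, adapted to the Neumann/Dirichlet split. Fix $v \in H^1_{\GD}(\Omega)$ with $\|\grad v\|_{\AAA,\Omega}=1$. The assumption on $\GRAD$ is exactly a Galerkin orthogonality: $(f,v_h)_\Omega - (\AAA\GRAD,\grad v_h)_\Omega = 0$ for every $v_h \in \CP_q(\CT_h)\cap H^1_{\GD}(\Omega)$. I will therefore subtract $\CI_{hq}v$ from $v$ inside the residual, using the quasi-interpolator from Section~\ref{section_quasi_interpolation}, which by construction belongs to $\CP_q(\CT_h)\cap H^1_{\GD}(\Omega)$. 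This yields
\begin{equation*}
(f,v)_\Omega-(\AAA\GRAD,\grad v)_\Omega = (f,v-\CI_{hq}v)_\Omega-(\AAA\GRAD,\grad(v-\CI_{hq}v))_\Omega.
\end{equation*}

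Next I perform an element-wise integration by parts, producing in each $K$ a volume term $(f+\div(\AAA\GRAD),v-\CI_{hq}v)_K$ and boundary contributions $\langle \AAA\GRAD\cdot\bn_K, v-\CI_{hq}v\rangle_{\partial K}$. The latter are reassembled face by face: since $v-\CI_{hq}v \in H^1_{\GD}(\Omega)$ is continuous and vanishes on $\GD$, the faces in $\CFDh$ drop out and the remaining face terms collapse to $(\jmp{\AAA\GRAD}\cdot\bn_F, v-\CI_{hq}v)_F$ for $F \in \CFIh \cup \CFNh$, i.e.\ exactly $\partial K \setminus \GD$ once reattached to each element.

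It then remains the standard Cauchy--Schwarz bookkeeping. On each element I split each factor by $\alpha_{\min,\ttomK}^{\pm 1/2}$, so that on one side the weighted residuals $(h_K/q)\|f+\div(\AAA\GRAD)\|_K$ and $(h_K/q)^{1/2}\|\jmp{\AAA\GRAD}\cdot\bn_K\|_{\partial K \setminus \GD}$ appear with the correct factor $\alpha_{\min,\ttomK}^{-1/2}$, while on the other side I have $\alpha_{\min,\ttomK}^{1/2}$ times the quantity $(q/h_K)\|v-\CI_{hq}v\|_K + (q/h_K)^{1/2}\|v-\CI_{hq}v\|_{\partial K}$. For this second factor I invoke the interpolation bound~\eqref{eq_quasi_interpolation_scalar}, which controls it by $C(\kappa_{\omK})(\ell_\Omega^{-1}\|v\|_{\omK}+\|\grad v\|_{\omK})$. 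Discrete Cauchy--Schwarz over elements together with the finite-overlap property of the patches $\omK$ (and then of $\ttomK$) turns the sum into the estimator on the left, multiplied by $\alpha_{\min,\Omega}^{1/2}(\ell_\Omega^{-1}\|v\|_\Omega+\|\grad v\|_\Omega)$, which by Friedrichs and the normalization $\|\grad v\|_{\AAA,\Omega}=1$ is bounded by $C$.

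The one genuinely delicate point, rather than the mechanical integration by parts, is the handling of boundary faces: I have to make sure the Neumann faces survive in the jump sum (because $v-\CI_{hq}v$ does not vanish there) while Dirichlet faces drop out, so that the jump sum runs exactly over $\partial K \setminus \GD$ as stated. Everything else — the choice of weights $h_K/q$, the $\alpha_{\min,\ttomK}$ scaling, the overlap argument to convert $\omK$ sums into $\ttomK$ sums — is entirely parallel to what was done for Lemma~\ref{lemma_res_abstract_rel}, so the proof structure mirrors that one.
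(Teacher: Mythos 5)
Your proof is correct and follows essentially the same route as the paper: Galerkin orthogonality to insert $v-\CI_{hq}v$, element-wise integration by parts with the jump terms collapsing onto $\partial K\setminus\GD$, weighted Cauchy--Schwarz with the $\alpha_{\min}^{\pm1/2}$ split, the quasi-interpolation bound \eqref{eq_quasi_interpolation_scalar}, and finite overlap of patches. The only (harmless) difference is that you explicitly absorb the $\ell_\Omega^{-1}\|v\|$ contribution via a Friedrichs inequality, whereas the paper silently drops it when passing to $\|\grad v\|_{\AAA,\omK}$.
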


\begin{proof}
We fix $v \in H^1_{\GD}(\Omega)$ and observe that by assumption, we have
\begin{equation*}
(f,v)_\Omega-(\AAA\GRAD,\grad v)_\Omega
=
(f,\widetilde v)_\Omega-(\AAA\GRAD,\grad \widetilde v)_\Omega
\end{equation*}
with $\widetilde v = v -\CI_{hq} v$, where $\CI_{hq}$ is the quasi-interpolation
operator from \eqref{eq_quasi_interpolation_scalar}. Integration by parts then gives that
\begin{align*}
(f,v)_\Omega-(\AAA\GRAD,\grad v)_\Omega
&=
\sum_{K \in \CT_h}
\left \{
(\div(\AAA\GRAD)+f,\widetilde v)_K-(\AAA\GRAD\cdot \bn_K,\widetilde v)_{\partial K}
\right \}
\\
&=
\sum_{K \in \CT_h}
(\div(\AAA\GRAD)+f,\widetilde v)_K
-
\sum_{F \in \CF_h}
(\jmp{\AAA\GRAD(u_h)}\cdot \bn_F,\widetilde v)_F
\\
&\leq
\sum_{K \in \CT_h}
\left \{
\|\div(\AAA\GRAD)+f\|_K\|\widetilde v\|_K
+
\|\jmp{\AAA\GRAD}\cdot \bn_K\|_{\partial K \setminus \GD}\|\widetilde v\|_{\partial K}
\right \},
\end{align*}
and by further employing Cauchy-Schwarz inequalities, we have
\begin{align*}
&(f,v)_\Omega-(\AAA\GRAD,\grad v)_\Omega
\\
&\leq
\sum_{K \in \CT_h}
\alpha_{\min,\omK}^{-1/2}
\left (
\frac{h_K}{q}
\|\div(\AAA\GRAD)+f\|_K
+
\sqrt{\frac{h_K}{q}}
\|\jmp{\AAA\GRAD}\cdot \bn_K\|_{\partial K \setminus \GD}
\right )
\\
&\qquad \qquad \times
\alpha_{\min,\omK}^{1/2}
\left (
\frac{q}{h_K}
\|\widetilde v\|_K
+
\sqrt{\frac{q}{h_K}}
\|\widetilde v\|_{\partial K}
\right )^2
\\
&\leq
\sqrt
{
\sum_{K \in \CT_h}
\alpha_{\min,\omK}^{-1}
\left (
\frac{h_K}{q}
\|\div(\AAA\GRAD)+f\|_K
+
\sqrt{\frac{h_K}{q}}
\|\jmp{\AAA\GRAD}\cdot \bn_K\|_{\partial K \setminus \GD}
\right )^2
}
\\
&\times
\sqrt
{
\sum_{K \in \CT_h}
\alpha_{\min,\omK}
\left (
\frac{q}{h_K}
\|\widetilde v\|_K
+
\sqrt{\frac{q}{h_K}}
\|\widetilde v\|_{\partial K}
\right )^2},
\end{align*}
and \eqref{eq_rel_res_div} follows provided we show that
\begin{equation}
\label{tmp_bound_gradv}
\sum_{K \in \CT_h}
\alpha_{\min,\omK}
\left (
\frac{q}{h_K}
\|\widetilde v\|_K
+
\sqrt{\frac{q}{h_K}}
\|\widetilde v\|_{\partial K}
\right )^2
\leq
C(\kappa_{\CT_h})\|\grad v\|_{\AAA,\Omega}^2.
\end{equation}
To that end, we employ \eqref{eq_quasi_interpolation_scalar} leading to
\begin{equation*}
\alpha_{\min,\omK}^{1/2}
\left (
\frac{q}{h_K}
\|\widetilde v\|_K
+
\sqrt{\frac{q}{h_K}} 
\|\widetilde v\|_{\partial K}
\right )
\leq
C(\kappa_{\omK}) \alpha_{\min,\omK}^{1/2} \|\grad v\|_{\omK}
\leq
C(\kappa_{\omK}) \|\grad v\|_{\AAA,\omK},
\end{equation*}
and \eqref{tmp_bound_gradv} follows by summation due to the
finite number of overlap between patches.
\end{proof}

\subsection{Abstract efficiency}
\label{section_res_abstract_eff}

The following efficiency results are proved using the standard bubble function techniques,
as introduced in Section \ref{section_bubble}. We first establish a result for the new
terms in the error estimator, and then revisit the usual proofs for the standard terms.

\begin{lemma}[Abstract efficiency]
\label{lemma_res_abstract_eff}
Let $q \geq 1$ and assume that $\GRAD \in \BCP_q(\CT_h)$. Then,
\begin{equation}
\label{eq_eff_res_K}
\frac{h_K}{q} \|\curl \GRAD\|_K \leq C(\kappa_K) q \|\grad u-\GRAD\|_K
\end{equation}
for all $K \in \CT_h$ and
\begin{equation}
\label{eq_eff_res_F}
\left (\frac{h_K}{q}\right )^{1/2} \|\jmp{\GRAD} \times \bn_F\|_F
\leq
C(\kappa_{\omF}) q^{3/2} \|\grad u-\GRAD\|_{\omF}
\end{equation}
for all $F \in \CFIh \cup \CFDh$.
\end{lemma}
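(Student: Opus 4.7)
The plan is to apply the classical Verfürth--Melenk--Wohlmuth bubble-function technique in its polynomial-degree-explicit form, as recalled in Section~\ref{section_bubble}. The decisive observation that lets one connect the new residual terms to $\grad u - \GRAD$ is that $u \in H^1_{\GD}(\Omega)$ implies $\curl \grad u = 0$ in $\Omega$ and $\jmp{\grad u} \times \bn_F = 0$ for every $F \in \CFIh \cup \CFDh$: on interior faces the tangential derivatives from both sides coincide, and on Dirichlet faces the tangential derivatives of $u|_F = 0$ vanish. This lets me freely insert $\grad u$ (or subtract it) into every test integral.

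For the element estimate \eqref{eq_eff_res_K}, I set $\bv_q \eq \curl \GRAD \in \BCP_{q-1}(K)$ and use \eqref{eq_bubble_K} to reduce to controlling $\|b_K^{1/2} \bv_q\|_K$. Since $b_K \bv_q$ vanishes on $\partial K$, integration by parts together with $\curl \grad u = 0$ gives
\begin{equation*}
\|b_K^{1/2} \bv_q\|_K^2 = (\curl \GRAD, b_K \bv_q)_K = (\GRAD - \grad u, \curl(b_K \bv_q))_K.
\end{equation*}
Cauchy--Schwarz and the second inequality in \eqref{eq_bubble_K} yield $\|b_K^{1/2} \bv_q\|_K \leq C(\kappa_K) (q/h_K) \|\GRAD - \grad u\|_K$, and one last use of \eqref{eq_bubble_K} produces \eqref{eq_eff_res_K}.

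For the face estimate \eqref{eq_eff_res_F}, let $\bw_q \eq \jmp{\GRAD} \times \bn_F \in \BCP_q(F)$ (tangential to $F$) and take $\bphi \eq \LE_F(\bw_q)$, extended by zero outside $\omF$. Applying \eqref{eq_bubble_F} first, it suffices to bound $\|b_F^{1/2} \bw_q\|_F$. Integrating by parts element-wise on $\omF$ and exploiting (i) $\bphi \times \bn = 0$ on $\partial \omF \setminus F$, (ii) $\curl \grad u = 0$, and (iii) $\jmp{\grad u} \times \bn_F = 0$, one obtains
\begin{equation*}
\|b_F^{1/2} \bw_q\|_F^2 = (\jmp{\GRAD} \times \bn_F, \bphi)_F = (\GRAD - \grad u, \curl \bphi)_{\omF} - (\curl \GRAD, \bphi)_{\omF},
\end{equation*}
after which Cauchy--Schwarz and the scaling bounds \eqref{eq_stability_extension} give
\begin{equation*}
\|b_F^{1/2} \bw_q\|_F \leq C(\kappa_{\omF}) \left( \tfrac{q}{h_F^{1/2}} \|\GRAD - \grad u\|_{\omF} + \tfrac{h_F^{1/2}}{q} \|\curl \GRAD\|_{\omF} \right).
\end{equation*}

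The last step is to absorb the volumetric curl contribution using \eqref{eq_eff_res_K}, which has already been established: shape-regularity yields $h_F \sim h_K$ for every $K \subset \omF$, and \eqref{eq_eff_res_K} then gives $(h_F/q) \|\curl \GRAD\|_{K} \leq C q \|\grad u - \GRAD\|_{K}$, so that $(h_F^{1/2}/q) \|\curl \GRAD\|_{\omF} \leq C q^{1/2} \|\grad u - \GRAD\|_{\omF}$. Multiplying by $q$ from \eqref{eq_bubble_F} and by $(h_K/q)^{1/2}$ produces the announced $q^{3/2}$ scaling. The only real subtlety is bookkeeping: making sure the boundary contributions on $\partial \omF$ vanish correctly for both $F \in \CFIh$ (cancellation plus zero extension) and $F \in \CFDh$ (extension plus $\grad u \times \bn = 0$ on $\GD$), and tracking the exact powers of $q$ and $h_K$ when combining the two estimates.
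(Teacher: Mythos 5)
Your proof is correct and follows essentially the same route as the paper's: bubble functions on $K$ with $\curl\grad u=0$ for \eqref{eq_eff_res_K}, then the face bubble extension $\LE_F$, insertion of $\grad u$ via $\jmp{\grad u}\times\bn_F=0$ (resp.\ $u=0$ on $F\in\CFDh$), and absorption of the volumetric $\|\curl\GRAD\|_{\omF}$ term through the already-proved element bound. One bookkeeping slip: from \eqref{eq_eff_res_K} the correct absorption reads $\frac{h_F^{1/2}}{q}\|\curl\GRAD\|_{\omF}\leq C(\kappa_{\omF})\frac{q}{h_F^{1/2}}\|\grad u-\GRAD\|_{\omF}$ rather than $Cq^{1/2}\|\grad u-\GRAD\|_{\omF}$, but this matches the first term exactly and the final $q^{3/2}$ scaling follows as claimed.
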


\begin{proof}
Let $K \in \CT_h$. Since $\curl \GRAD \in \BCP_{q-1}(\CT_h) \subset \BCP_q(\CT_h)$,
we can apply \eqref{eq_bubble_K} with $\bv_q = \curl \GRAD$. Using the second estimate
in \eqref{eq_bubble_K}, we have
\begin{multline*}
\|b_K^{1/2}\curl\GRAD\|_K^2
=
(b_K\curl \GRAD,\curl(\GRAD-\grad u))_K
=
(\curl(b_K\curl \GRAD),\GRAD-\grad u)_K
\\
\leq
\|\curl(b_K\curl \GRAD)\|_K\|\GRAD-\grad u\|_K
\leq
C(\kappa_K) \frac{q}{h_K} \|b_K^{1/2}\curl\GRAD\|_K^2 \|\GRAD-\grad u\|_K
\end{multline*}
leading to
\begin{equation*}
\frac{h_K^2}{q^2}(b_K\curl \GRAD,\curl \GRAD)_K
\leq
C(\kappa_K)
\|\GRAD-\grad u\|_K^2.
\end{equation*}
We then show~\eqref{eq_eff_res_K} with the first estimate in~\eqref{eq_bubble_K} as follows:
\begin{equation*}
\frac{h_K^2}{q^2}\|\curl \GRAD\|_K^2
\leq
C(\kappa_K)
\frac{h_K^2}{q^2}q^2(b_K\curl \GRAD,\curl \GRAD)_K
\leq
C(\kappa_K)
q^2\|\GRAD-\grad u\|_K^2.
\end{equation*}

On the other hand, for $F \in \CFIh \cap \CFDh$, we have $\jmp{\GRAD} \times \bn_F \in \BCP_q(F)$,
and we can write
\begin{align*}
(b_F\jmp{\GRAD} \times \bn_F,\jmp{\GRAD} \times \bn_F)_F
&=
(\LE_F(\jmp{\GRAD} \times \bn_F),\jmp{\GRAD} \times \bn_F)_F
\\
&=
(\curl\LE_F(\jmp{\GRAD} \times \bn_F),\GRAD)_{\omega_F}
-
(\LE_F(\jmp{\GRAD} \times \bn_F),\grad_h\times\GRAD)_{\omega_F}
\\
&=
(\curl\LE_F(\jmp{\GRAD} \times \bn_F),\GRAD-\grad u)_{\omega_F}
-
(\LE_F(\jmp{\GRAD} \times \bn_F),\grad_h\times\GRAD)_{\omega_F},
\end{align*}
where we are allowed to plug in $\grad u$ since $u \in H^1(\omega_F)$
with $u = 0$ on $F$ when $F \in \CFDh$. For the first term, we use
\eqref{eq_stability_extension} and write that
\begin{align*}
|(\curl\LE_F(\jmp{\GRAD} \times \bn_F),\GRAD-\grad u)_{\omega_F}|
&\leq
\|\curl\LE_F(\jmp{\GRAD} \times \bn_F)\|_{\omega_F}\|\GRAD-\grad u\|_{\omega_F}
\\
&\leq
\|\curl\LE_F(\jmp{\GRAD} \times \bn_F)\|_{\omega_F}\|\GRAD-\grad u\|_{\omega_F}
\\
&\leq
C(\kappa_{\omF})\frac{q}{h_F^{1/2}} \|b_F^{1/2}\jmp{\GRAD} \times \bn_F\|_F\|\GRAD-\grad u\|_{\omega_F}.
\end{align*}
For the second term, we use \eqref{eq_eff_res_K} and \eqref{eq_stability_extension},
which gives
\begin{align*}
|(\LE_F(\jmp{\GRAD} \times \bn_F),\grad_h\times\GRAD)_{\omega_F}|
&\leq
\|\LE_F(\jmp{\GRAD} \times \bn_F)\|_{\omega_F}\|\grad_h\times\GRAD\|_{\omega_F}
\\
&\leq
C(\kappa_{\omF})\frac{h_F^{1/2}}{q} \|b_F^{1/2}\jmp{\GRAD} \times \bn_F\|_F
\frac{q^2}{h_F}\|\GRAD-\grad u\|_{\omega_F}
\\
&\leq
C(\kappa_{\omF}) \frac{q}{h_F^{1/2}} \|b_F^{1/2}\jmp{\GRAD} \times \bn_F\|_F \|\GRAD-\grad u\|_{\omega_F}.
\end{align*}
Combining the last two bounds gives that
\begin{equation*}
\|b_F^{1/2}\jmp{\GRAD}\times\bn_F\|_F
\leq
C(\kappa_{\omF}) \frac{q}{h_F^{1/2}} \|\GRAD-\grad u\|_{\omega_F},
\end{equation*}
and therefore, with~\eqref{eq_bubble_F}, we have
\begin{equation*}
\sqrt{\frac{h_F}{q}}
\|\jmp{\GRAD}\times\bn_F\|_F
\leq
C(\kappa_{\omF}) \sqrt{qh_F}
\|b_F^{1/2}\jmp{\GRAD}\times\bn_F\|_F
\leq
C(\kappa_{\omF})q^{3/2} \|\GRAD-\grad u\|_{\omega_F},
\end{equation*}
which concludes the proof of~\eqref{eq_eff_res_F}.
\end{proof}

\begin{lemma}[Abstract efficiency of the conforming part of the estimator]
\label{lemma_res_eff_div}
Let $q \geq 1$ and assume that $\GRAD \in \CP_q(\CT_h)$ and $\AAA \in \TCP_q(\CT_h)$.
We have
\begin{multline}
\label{eq_eff_res_div_K}
\alpha_{\min,K}^{-1/2}\frac{h_K}{q}\|\div(\AAA\GRAD)+f\|_K
\leq
C(\kappa_K)
\\
q \left (\sqrt{\frac{\alpha_{\max,K}}{\alpha_{\min,K}}}\|\grad u-\GRAD\|_{\AAA,K} + \alpha_{\min,K}^{-1/2}\frac{h_K}{q}\|f-\Pi_{hq} f\|_K\right )
\end{multline}
for all $K \in \CT_h$, and
\begin{multline}
\label{eq_eff_res_div_F}
\alpha_{\min,\omF}^{-1/2}\sqrt{\frac{h_F}{q}}
\|\jmp{\GRAD} \cdot \bn_F\|_F
\leq
C(\kappa_{\omF})
\\
q^{3/2} \left (
\sqrt{\frac{\alpha_{\max,\omF}}{\alpha_{\min,\omF}}} \|\grad u-\GRAD\|_{\AAA,\omF}
+
\alpha_{\min,\omF}^{-1/2}\frac{h_K}{q}\|f-\Pi_{hq} f\|_{\omF}
\right )
\end{multline}
for all $F \in \CFIh \cup \CFNh$
\end{lemma}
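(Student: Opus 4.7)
The plan is a standard bubble–function efficiency argument, but split into a volume part and a face part, each time reducing the residual back to $\|\grad u-\GRAD\|_{\AAA}$ via the weak formulation~\eqref{eq_poisson_weak}. The two main ingredients are the polynomial inverse-type inequalities~\eqref{eq_bubble_K_scalar} and the extension operator~\eqref{eq_stability_extension_scalar}, together with a splitting of the load via $\Pi_{hq}f$ to isolate the data oscillation.

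For the volume estimate~\eqref{eq_eff_res_div_K}, I would first introduce the fully discrete residual $r_h \eq \div(\AAA\GRAD)+\Pi_{hq}f$. Since $\AAA \in \TCP_q(\CT_h)$ and $\GRAD \in \CP_q(\CT_h)$, this is piecewise polynomial, so~\eqref{eq_bubble_K_scalar} gives $\|r_h\|_K \lesssim q \|b_K^{1/2}r_h\|_K$ (with the polynomial degree absorbed in the constant). I would then test with the bubble-weighted function $b_K r_h$, whose zero trace on $\partial K$ allows its extension by zero to lie in $H^1_{\GD}(\Omega)$. Writing
\begin{equation*}
(b_K r_h,r_h)_K
=
(b_K r_h,\div(\AAA\GRAD)+f)_K - (b_K r_h,f-\Pi_{hq} f)_K,
\end{equation*}
the weak formulation~\eqref{eq_poisson_weak} applied to the extended $b_K r_h$ turns the first term into $(\AAA(\grad u-\GRAD),\grad(b_K r_h))_K$ after integration by parts. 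Cauchy--Schwarz, the gradient bound in~\eqref{eq_bubble_K_scalar}, and a division by $\|b_K^{1/2}r_h\|_K$ then yield a bound on $\|r_h\|_K$ by the two terms on the right of~\eqref{eq_eff_res_div_K}. A triangle inequality with $\|\div(\AAA\GRAD)+f\|_K \leq \|r_h\|_K + \|f-\Pi_{hq}f\|_K$ finishes the volume term.

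For the face estimate~\eqref{eq_eff_res_div_F}, I would let $w_h \eq \jmp{\AAA\GRAD}\cdot \bn_F$, which is polynomial on $F$, and test against $\LE_F(w_h)$ from Section~\ref{section_bubble}. The key point is that $\LE_F(w_h)$ vanishes on $\partial \omF \setminus F$, so extending by zero outside $\omF$ produces an element of $H^1_{\GD}(\Omega)$ both for $F \in \CFIh$ and, crucially, for $F \in \CFNh$ (for which there is \emph{no} boundary condition to impose on $\LE_F(w_h)|_F$). Starting from $\|b_F^{1/2}w_h\|_F^2 = (\LE_F(w_h),w_h)_F$, I would integrate by parts element by element inside $\omF$ to recast this as
\begin{equation*}
\sum_{K \subset \omF}
\left\{(\grad \LE_F(w_h),\AAA\GRAD)_K + (\LE_F(w_h),\div(\AAA\GRAD))_K\right\},
\end{equation*}
then use the weak formulation to rewrite $(\LE_F(w_h),f)_{\omF} = (\AAA \grad u,\grad \LE_F(w_h))_{\omF}$. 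This yields
\begin{equation*}
(\AAA(\grad u-\GRAD),\grad \LE_F(w_h))_{\omF}
+
(\LE_F(w_h),\Pi_{hq}f - f)_{\omF}
+
(\LE_F(w_h),\div(\AAA\GRAD)+\Pi_{hq}f)_{\omF}.
\end{equation*}
The last term is bounded by invoking the already-proved volume estimate~\eqref{eq_eff_res_div_K}, while the first two are dispatched by Cauchy--Schwarz together with the weighted stability of $\LE_F$ in~\eqref{eq_stability_extension_scalar}. Combining with the trace bubble inequality~\eqref{eq_bubble_F_scalar} gives the estimate~\eqref{eq_eff_res_div_F}.

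The main obstacle I anticipate is purely bookkeeping: tracking the $q$ powers through the bubble bounds, the extension stability, and the triangle inequalities so that the final constants match those in the statement (notably the $q$ vs.\ $q^{3/2}$ asymmetry between the volume and face estimates, which reflects the extra inverse trace factor $\sqrt{h_F/q}$ entering through~\eqref{eq_stability_extension_scalar}). Conceptually everything is standard, and the only place where non-conformity matters is in choosing test functions whose zero-extensions genuinely live in $H^1_{\GD}(\Omega)$ on Neumann faces.
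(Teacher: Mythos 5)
Your proposal is correct and follows essentially the same route as the paper's proof: the volume bound via the interior bubble $b_K r_h$ with the splitting $\div(\AAA\GRAD)+\Pi_{hq}f$ to isolate the oscillation, and the face bound via $\LE_F(\jmp{\AAA\GRAD}\cdot\bn_F)$, the weak formulation, the stability bounds \eqref{eq_bubble_K_scalar}, \eqref{eq_bubble_F_scalar}, \eqref{eq_stability_extension_scalar}, and a final appeal to the already-established volume estimate. Your explicit remark about why the zero-extension of the test function lies in $H^1_{\GD}(\Omega)$ on Neumann faces is a point the paper leaves implicit, but the argument is the same.
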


\begin{proof}
We first write that
\begin{equation}
\label{tmp_decomposition_oscillation}
\|\div(\AAA\GRAD) + f\|_K
=
\|\div(\AAA\GRAD) + \Pi_{hq} f\|_K
+
\|f-\Pi_{hq} f\|_K
\end{equation}
where $\Pi_{hq} f \in \CP_{q-1}(\CT_h)$ is the projection of $f$
introduced in \eqref{eq_data_oscillation}. Now, we can work with
the polynomial data $r_K \eq \div(\AAA\GRAD) + \Pi_{hq} f \in \CP_{2q-1}(K)$.
We have
\begin{multline*}
\|b_K^{1/2} r_K\|_K^2 = (b_K r_K,r_K)_K = (\Pi_{hq} f,b_K r_K)_K+(\div(\AAA\GRAD),b_Kr_K)_K
\\
=
(\Pi_{hq} f-f,b_Kr_K)_K
+
(\AAA(\grad u-\GRAD),\grad (b_Kr_K))_K
\end{multline*}
since $b_K r_K \in H^1_0(K)$. For the first term, we write
\begin{equation*}
(\Pi_{hq} f-f,b_Kr_K)_K
\leq
\|f-\Pi_{hq} f\|_K\|b_Kr_K\|_K
\leq
C(\kappa_K)\|f-\Pi_{hq} f\|_K\|b_K^{1/2}r_K\|_K
\end{equation*}
As for the second term, we have
\begin{multline*}
(\AAA(\grad u-\GRAD),\grad (b_Kr_K))_K
\leq
\|\grad u - \GRAD\|_{\AAA,K}\alpha_{\max,K}^{1/2} \|\grad(b_Kr_K)\|_K
\\
\leq
C(\kappa_K) \frac{q}{h_K} \|\grad u-\GRAD\|_{\AAA,K}\alpha_{\max,K}^{1/2}\|b_K^{1/2}r_K\|_K,
\end{multline*}
leading to
\begin{equation*}
\frac{h_K}{q}\|r_K\|_K
\leq
C(\kappa_K) q \left (
\alpha_{\max,K}^{1/2}\|\grad u-\GRAD\|_{\AAA,K}
+
\frac{h_K}{q}\|f-\Pi_{hq} f\|_K
\right ),
\end{equation*}
due to \eqref{eq_bubble_K_scalar}. Then, \eqref{eq_eff_res_div_K} simply follows from
\eqref{tmp_decomposition_oscillation}.

For \eqref{eq_eff_res_div_F}, we fix $F \in \CFIh \cup \CFNh$ and
let $r_F \eq \jmp{\AAA\GRAD} \cdot \bn_F \in \CP_q(F)$. Then,
we write that
\begin{align*}
\|b_F^{1/2}r_F\|_F^2
&=
(r_F,b_Fr_F)_F
=
(r_F,\LE_F(r_F))_F
\\
&=
(\AAA\GRAD,\grad \LE_F(r_F))_{\omF}
+
(\grad_h \cdot(\AAA\GRAD),\LE_F(r_F))_{\omF}
\\
&=
(\AAA(\GRAD-\grad u),\grad \LE_F(r_F))_{\omF}
+
(\grad_h \cdot(\AAA\GRAD)+f,\LE_F(r_F))_{\omF}
\\
&\leq
\|\GRAD-\grad u\|_{\AAA,\omF}\alpha_{\max,\omF}^{1/2}\|\grad \LE_F(r_F)\|_{\omF}
+
\|\grad_h \cdot(\AAA(\GRAD)+f\|_{\omF}\|\LE(r_F)\|_{\omF}
\end{align*}
and it follows from \eqref{eq_stability_extension_scalar} that
\begin{equation*}
\|b_F^{1/2} r_F\|_F
\leq
C(\kappa_{\omF})
\left (
\alpha_{\max,\omF}^{1/2} q h_F^{-1/2} \|\GRAD -\grad u\|_{\AAA,\omF}
+
q^{-1} h_F^{1/2} \|\grad_h \cdot(\AAA\GRAD)+f\|_{\omF}
\right )
\end{equation*}
and
\begin{equation*}
\sqrt{\frac{h_F}{p}}
\|r_F\|_F
\leq
C(\kappa_{\omF})
\left (
\alpha_{\max,\omF}^{1/2} q^{3/2} \|\GRAD-\grad u\|_{\AAA,\omF}
+
q^{1/2}\frac{h_F}{q} \|\grad_h \cdot(\AAA\GRAD_h)+f\|_{\omF}
\right )
\end{equation*}
due to \eqref{eq_bubble_F_scalar}. Then, \eqref{eq_eff_res_div_F}
follows from \eqref{eq_eff_res_div_K}.
\end{proof}

\begin{remark}[Piecewise polynomial coefficient]
It is interesting to remark that we do not require any piecewise polynomial
assumption on $\AAA$ in Lemma~\ref{lemma_res_abstract_eff}, whereas such an
assumption is necessary in Lemma~\ref{lemma_res_eff_div}. There, we have assumed
for simplicity that $\AAA \in \TCP_q(\CT_h)$, but we could more generally assume
that $\AAA \in \TCP_r(\CT_h)$ for any $r \geq 1$ if we replace the first occurences
of $q$ by $p+r$ in the right-hand sides of~\eqref{eq_eff_res_div_K} and~\eqref{eq_eff_res_div_F}.
\end{remark}

\subsection{An alternative error estimator for the IPDG scheme}

We are now ready to propose a new error estimate for the IPDG scheme
introduced in Section~\ref{section_ipdg_scheme}.
Specifically, for each $K \in \CT_h$, we introduce the estimator
\begin{align*}
\eta_K^2
&\eq
\frac{1}{\alpha_{\min,\ttomK}}\left (\frac{h_K}{p}\right )^2 \|\div(\AAA\GRAD_h(u_h))+f\|_K^2
+
\frac{1}{\alpha_{\min,\ttomK}} \frac{h_K}{p} \|\jmp{\AAA \GRAD_h(u_h)} \cdot \bn\|_{\partial K \setminus \GD}^2
\\
&+
\alpha_{\max,\ttomK} \left (\frac{h_K}{p}\right )^2 \|\curl \GRAD_h(u_h)\|_{K}^2
+
\alpha_{\max,\ttomK} \frac{h_K}{p} \|\jmp{\GRAD_h(u_h)} \times \bn\|_{\partial K \setminus \GN}^2.
\end{align*}
We have the following estimates:

\begin{theorem}[Residual-based estimator for IPDG]
\label{theorem_res_ipdg}
We have
\begin{equation}
\label{eq_rel_res_ipdg}
\|\GRAD_h(u-u_h)\|_{\AAA,\Omega}^2 \leq C(\kappa_{\CT_h}) (1+\gamma_h)\sum_{K \in \CT_h} \eta_K^2.
\end{equation}
Furthermore, if $\AAA \in \TCP_p(\CT_h)$, then
\begin{equation}
\label{eq_eff_res_ipdg}
\eta_K
\leq
C(\kappa_{\omKF})
p^{3/2} \left (
\sqrt{\frac{\alpha_{\max,\ttomK}}{\alpha_{\min,\ttomK}}} \|\GRAD_h(u-u_h)\|_{\AAA,\omKF}
+
\alpha_{\min,\omKF}^{-1/2}\frac{h_K}{q}\|f-\Pi_{hp} f\|_{\omKF}
\right )
\end{equation}
for all $K \in \CT_h$.
\end{theorem}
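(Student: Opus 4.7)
The plan is to derive both estimates by specialising the abstract results of the preceding subsections to $\GRAD \eq \GRAD_h(u_h)$, with the generalised Prager--Synge identity \eqref{eq_generalized_prager_synge} playing the role of master identity. For reliability, the first step is to verify the orthogonality hypothesis of Corollary~\ref{corollary_generalized_prager_synge}: any $\bxi_h \in \RT_1(\CT_h) \cap \BH_{\GN}(\ddiv^0,\Omega)$ belongs to $\BCP_{p-1}(\CT_h) \cap \BH_{\GN}(\ddiv,\Omega)$ (at least for $p \geq 2$), so identity \eqref{eq_stokes_discrete_gradient} combined with $\div \bxi_h = 0$ gives $(\GRAD_h(u_h),\bxi_h)_\Omega = (u_h,\div\bxi_h)_\Omega = 0$. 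Consequently \eqref{eq_generalized_prager_synge} splits the error into a divergence residual and a curl residual to be bounded independently.

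The divergence residual is controlled by Lemma~\ref{lemma_res_rel_div} applied with $q = p$: its structural hypothesis is exactly the Galerkin orthogonality \eqref{eq_ipdg_galerkin_orthogonality}, and its conclusion produces the first two terms of $\eta_K$. The curl residual is controlled by Lemma~\ref{lemma_res_abstract_rel}: the orthogonality of $\GRAD_h(u_h)$ against divergence-free Raviart--Thomas test functions is once more a consequence of \eqref{eq_stokes_discrete_gradient} (after the degree-matching described below), and its conclusion produces the last two terms of $\eta_K$. Summing the two bounds yields \eqref{eq_rel_res_ipdg}.

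For the efficiency bound \eqref{eq_eff_res_ipdg} I invoke Lemmas~\ref{lemma_res_abstract_eff} and~\ref{lemma_res_eff_div} with $\GRAD = \GRAD_h(u_h)$. Since $\GRAD_h(u_h) = \grad_h u_h + \LIFT_h(u_h) \in \BCP_{p-1}(\CT_h) \subset \BCP_p(\CT_h)$, the polynomial-degree hypothesis is satisfied; and because $u \in H^1_{\GD}(\Omega)$ has vanishing interior jumps and vanishing trace on $\GD$, the lifting $\LIFT_h(u)$ vanishes, so $\GRAD_h(u-u_h) = \grad u - \GRAD_h(u_h)$. The two lemmas then bound the four contributions to $\eta_K$ on the face patch $\omKF$, with the data-oscillation term $\|f-\Pi_{hp}f\|_{\omKF}$ arising naturally from Lemma~\ref{lemma_res_eff_div}; the assumption $\AAA \in \TCP_p(\CT_h)$ enters only there. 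Converting the bare $L^2$ norm in \eqref{eq_eff_res_K}--\eqref{eq_eff_res_F} into the $\AAA$-weighted energy norm is achieved by inserting $\sqrt{\alpha_{\max,\ttomK}/\alpha_{\min,\ttomK}}$, which matches the efficiency constant displayed in \eqref{eq_eff_res_ipdg}.

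The main technical obstacle is the careful tracking of polynomial degrees when verifying the orthogonality hypothesis of Lemma~\ref{lemma_res_abstract_rel}: since $\RT_p(\CT_h) \not\subset \BCP_{p-1}(\CT_h)$ in general, \eqref{eq_stokes_discrete_gradient} cannot be invoked verbatim on an arbitrary $\bxi_h \in \RT_p(\CT_h) \cap \BH_{\GN}(\ddiv^0,\Omega)$. Two equivalent routes resolve this: either reprove the orthogonality directly by an element-wise integration by parts that exploits $\div \bxi_h = 0$ and the continuity of the normal trace across interior faces; or apply Lemma~\ref{lemma_res_abstract_rel} at $q = p-1$ (for which $\RT_{p-1}(\CT_h) \subset \BCP_{p-1}(\CT_h)$ when $p \geq 2$) and absorb the harmless factor $p/(p-1) \leq 2$ into $C(\kappa_{\CT_h})$. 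The low-order case $p = 1$ is then handled by a separate direct verification of the single nontrivial orthogonality relation.
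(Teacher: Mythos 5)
Your proposal is correct and follows the paper's proof exactly: verify the orthogonality of $\GRAD_h(u_h)$ via \eqref{eq_stokes_discrete_gradient}, invoke the generalized Prager--Synge identity \eqref{eq_generalized_prager_synge}, and apply Lemmas~\ref{lemma_res_abstract_rel} and~\ref{lemma_res_rel_div} (reliability) and Lemmas~\ref{lemma_res_abstract_eff} and~\ref{lemma_res_eff_div} (efficiency) with $\GRAD = \GRAD_h(u_h)$ and $q = p$. The ``degree-matching obstacle'' you flag at the end is not actually an obstacle, since any divergence-free element of $\RT_p(K) = \BCP_{p-1}(K) + \bx\,\CP_{p-1}(K)$ already lies in $\BCP_{p-1}(K)$ (the $\bx\,\CP_{p-1}$ component must vanish), so \eqref{eq_stokes_discrete_gradient} applies verbatim to every $\bxi_h \in \RT_p(\CT_h) \cap \BH_{\GN}(\ddiv^0,\Omega)$ for all $p \geq 1$; your proposed workarounds are nonetheless valid.
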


\begin{proof}
For the upper bound in~\eqref{eq_rel_res_ipdg}, we invoke the generalized Prager--Synge
inequality in~\eqref{eq_generalized_prager_synge}, which is feasible since
\begin{equation}
\label{tmp_assumption_discrete_gradient}
(\GRAD_h(u_h),\bxi_h)_\Omega
=
(u_h,\div \bxi_h)_\Omega
=
0
\end{equation}
for all $\bxi_h \in \RT_p(\CT_h) \cap \BH_{\GN}(\ddiv^0,\Omega)$ due to
\eqref{eq_stokes_discrete_gradient}. We then bound the two terms using Lemmas
\ref{lemma_res_abstract_rel} and \ref{lemma_res_rel_div} with $\GRAD = \GRAD_h(u_h)$
and $q=p$. The assumptions of Lemma \ref{lemma_res_abstract_rel} are satisfied due
to \eqref{tmp_assumption_discrete_gradient} and the ones of
Lemma \ref{lemma_res_rel_div} due to \eqref{eq_ipdg_galerkin_orthogonality}.

As for~\eqref{eq_eff_res_ipdg}, we simply apply Lemmas \ref{lemma_res_abstract_eff}
and \ref{lemma_res_eff_div}, again with $\GRAD = \GRAD_h(u_h)$ and $q=p$.
\end{proof}

The first two terms in the definition of the estimator $\eta_K$ are standard:
They are essentially already present for conforming finite element discretizations, see
e.g.~\cite{ainsworth_oden_2000a,melenk_wohlmuth_2001a,verfurth_2013a}.
The novelty of the proposed estimator is the way the non-conformity of the scheme is handled.


The standard error estimator for the IPDG scheme typically reads
\begin{align}
\label{eq_standard_IPDG_estimator}
\widetilde \eta_K^2
&\eq
\frac{1}{\alpha_{\min,\ttomK}}\left (\frac{h_K}{p}\right )^2 \|\div(\AAA\grad u_h)+f\|_K^2
+
\frac{1}{\alpha_{\min,\ttomK}} \frac{h_K}{p} \|\jmp{\AAA \grad u_h} \cdot \bn\|_{\partial K \setminus \GD}^2
\\
\nonumber
&+
\alpha_{\max,\ttomK} \frac{p^2}{h_K} \|\jmp{u_h}\|_{\partial K \setminus \GN}^2
\end{align}
for all $K \in \CT_h$.
The next result shows that the proposed estimator
is sharper than the standard one, in the sense that it is smaller while
remaining efficient (in a norm with less terms).

It follows from the discrete trace inequality in~\eqref{eq_inverse_trace} that
\begin{equation}
\label{eq_bound_lifting}
\|\LIFT_h(u_h)\|_{K}^2
\leq
C(\kappa_K) \frac{p^2}{h_K} \|\jmp{u_h}\|_{\partial K \setminus \GN}^2.
\end{equation}
As a result, we can first note that
\begin{equation*}
\|\grad u_h-\GRAD_h(u_h)\|_{\AAA,K}^2
=
\|\LIFT_h(u_h)\|_{\AAA,K}^2
\leq
C(\kappa_K)
\alpha_{\max,K} \frac{p^2}{h_K} \|\jmp{u_h}\|_{\partial K \setminus \GN}^2
\end{equation*}
and that therefore using the broken gradient $\grad_h u_h$ or the discrete gradient
$\GRAD_h(u_h)$ in~\eqref{eq_standard_IPDG_estimator} is not important as soon as the
jumps of $u_h$ are included in the estimator and the error measure.

\begin{theorem}[Comparison with the standard estimator]
\begin{subequations}
\label{eq_res_comparison}
For $K \in \CT_h$, we have
\begin{equation}
\label{eq_res_comparison_K}
\frac{h_K^2}{p^2}\|\curl \GRAD_h(u_h)\|_K^2
\leq
C(\kappa_K) p^2
\left (
\frac{p^2}{h_K}\|\jmp{u_h}\|_{\partial K \setminus \GN}^2
\right )
\end{equation}
and
\begin{equation}
\label{eq_res_comparison_F}
\frac{h_K}{p}\|\jmp{\GRAD_h(u_h)} \times \bn_F\|_{\partial K}^2
\leq
C(\kappa_{\omKF}) p^2
\left (
\sum_{\substack{K' \in \CT_h \\ K' \subset \omKF}}
\frac{p^2}{h_{K'}}\|\jmp{u_h}\|_{\partial K' \setminus \GN}^2
\right ).
\end{equation}
\end{subequations}
In particular, we have
\begin{equation}
\label{eq_res_div_comparison_K}
\eta_K^2 \leq C(\kappa_{\omKF}) p^2
\sum_{\substack{K' \in \CT_h \\ K' \subset \omKF}} \widetilde \eta_{K'}^2
\end{equation}
for all $K \in \CT_h$.
\end{theorem}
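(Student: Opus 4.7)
The plan is to exploit the decomposition $\GRAD_h(u_h) = \grad_h u_h + \LIFT_h(u_h)$ together with the observation that the broken gradient $\grad_h u_h$ is elementwise curl-free. Polynomial inverse inequalities, combined with the already-established control $\|\LIFT_h(u_h)\|_K^2 \leq C(\kappa_K)(p^2/h_K)\|\jmp{u_h}\|_{\partial K \setminus \GN}^2$ from \eqref{eq_bound_lifting}, should then give all three estimates.

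For \eqref{eq_res_comparison_K}, since $\curl(\grad_h u_h) = \bzero$ on each $K$, one has $\curl \GRAD_h(u_h) = \curl \LIFT_h(u_h)$. Applying \eqref{eq_inverse_K} componentwise to $\LIFT_h(u_h) \in \BCP_{p-1}(K)$ yields $\|\curl \LIFT_h(u_h)\|_K \leq C(\kappa_K)(p^2/h_K)\|\LIFT_h(u_h)\|_K$, and a further use of \eqref{eq_bound_lifting} produces the claim after multiplying by $h_K^2/p^2$.

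For \eqref{eq_res_comparison_F}, I would split the tangential jump through a face $F$ as
\[
\jmp{\GRAD_h(u_h)} \times \bn_F = \jmp{\grad_h u_h} \times \bn_F + \jmp{\LIFT_h(u_h)} \times \bn_F.
\]
The key ingredient is the identity $\jmp{\grad_h u_h} \times \bn_F = \grad_F \jmp{u_h} \times \bn_F$, i.e.\ the tangential jump of the gradient of a piecewise-smooth scalar is the tangential gradient of its jump (up to a rotation in the tangent plane). Since $\jmp{u_h} \in \CP_p(F)$, the face inverse inequality \eqref{eq_inverse_F} then delivers the factor $p^2/h_F$ acting on $\|\jmp{u_h}\|_F$. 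The lifting contribution is reduced to an elementwise norm by the trace inverse inequality \eqref{eq_inverse_trace} and then absorbed by a second application of \eqref{eq_bound_lifting}. Summing over $F \in \CF(K) \setminus \GN$ and using the shape-regularity equivalences $h_F \sim h_K \sim h_{K'}$ concludes this step.

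For \eqref{eq_res_div_comparison_K}, a triangle inequality between the first two terms of $\eta_K$ and those of $\widetilde \eta_K$ isolates the residual contributions $\|\div(\AAA\LIFT_h(u_h))\|_K$ and $\|\jmp{\AAA\LIFT_h(u_h)} \cdot \bn_F\|_F$. The assumption $\AAA \in \TCP_p(\CT_h)$ keeps these quantities polynomial of controlled degree, so \eqref{eq_inverse_K} and \eqref{eq_inverse_trace} combined again with \eqref{eq_bound_lifting} reduce them to $p$-weighted jump norms over $\omKF$; assembling with the outputs of the two previous paragraphs gives the final bound. The main obstacle I anticipate is the careful bookkeeping of $p$- and $h$-powers through the cascading inverse inequalities; in particular, the tangential-jump identity $\jmp{\grad_h u_h} \times \bn_F = \grad_F \jmp{u_h} \times \bn_F$ is essential, because applying a face inverse inequality directly to $\jmp{\grad_h u_h}$ would cost an additional factor of $p$ and preclude the stated $p^2$ scaling in \eqref{eq_res_comparison_F}.
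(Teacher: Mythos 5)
Your proposal follows essentially the same route as the paper: the split $\GRAD_h(u_h)=\grad_h u_h+\LIFT_h(u_h)$, the elementwise curl-freeness of the broken gradient, the identification of $\jmp{\grad_h u_h}\times\bn_F$ with the tangential gradient of $\jmp{u_h}$, and the cascading use of \eqref{eq_inverse_K}, \eqref{eq_inverse_F}, \eqref{eq_inverse_trace} and \eqref{eq_bound_lifting}. The only difference is that you treat the divergence and normal-jump terms in \eqref{eq_res_div_comparison_K} explicitly (correctly noting the need for $\AAA$ piecewise polynomial there), whereas the paper disposes of them in the discussion preceding the theorem; this is a harmless, indeed slightly more careful, elaboration.
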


\begin{proof}
Let $K \in \CT_h$. To establish~\eqref{eq_res_comparison_K},
we first invoke the inverse inequality in~\eqref{eq_inverse_K}, leading to
\begin{equation*}
\frac{h_K^2}{p^2}\|\curl \GRAD_h(u_h)\|_K^2
=
\frac{h_K^2}{p^2}\|\curl \LIFT_h(\jmp{u_h})\|_K^2
\leq
C(\kappa_K) p^2\|\LIFT_h(\jmp{u_h})\|_K^2,
\end{equation*}
and the result follows from \eqref{eq_bound_lifting}. To prove~\eqref{eq_res_comparison_F},
we first write that
\begin{align*}
\|\jmp{\GRAD_h(u_h)} \times \bn_F\|_F
&\leq
\|\jmp{\grad_h u_h} \times \bn_F\|_F
+
\|\jmp{\LIFT(u_h)} \times \bn_F\|_F
\\
&=
\|\grad_F \jmp{u_h}\|_F
+
\|\jmp{\LIFT(u_h)} \times \bn_F\|_F.
\end{align*}
For the first term, we simply employ the inverse inequality in~\eqref{eq_inverse_F}.
For the second term, if $F = \partial K_- \cap \partial K_+ \in \CFIh$ is an interior face,
we write
\begin{equation*}
\|\jmp{\LIFT(u_h)} \times \bn_F\|_F
\leq
\|\LIFT(u_h)|_{K_-} \times \bn_F\|_F
+
\|\LIFT(u_h)|_{K_+} \times \bn_F\|_F
\end{equation*}
and simply
\begin{equation*}
\|\jmp{\LIFT(u_h)} \times \bn_F\|_F
\leq
\|\LIFT(u_h)|_{K} \times \bn_F\|_F
\end{equation*}
for $F \in \CFDh$, where $K \in \CT_h$ is the only element such that $F \subset \partial K$.
We will only detail the second case. Using the trace inequality in~\eqref{eq_inverse_trace},
we have
\begin{equation*}
\|\LIFT(u_h) \times \bn_F\|_F
\leq
\|\LIFT(u_h)\|_F
\leq
C(\kappa_K)
\frac{p}{\sqrt{h_K}}
\|\LIFT(u_h)\|_K
\leq
C(\kappa_K) \frac{p^2}{h_K} \|\jmp{u_h}\|_{\partial K \setminus \GN},
\end{equation*}
where we employ~\eqref{eq_bound_lifting} in the last inequality.
\end{proof}

\subsection{An alternative error estimator for mixed FEM discretizations}

We next consider the mixed FEM discretization presented in Section~\ref{section_mixed_fem}.
Here, our error estimator reads
\begin{equation*}
\eta_K^2
\eq
\alpha_{\max,\ttomK}
\left \{
\left (
\frac{h_K}{p}
\right )^2 \|\curl (\AAA^{-1}\bsig_h)\|_K^2
+
\frac{h_K}{p}
\|\jmp{\AAA^{-1}\bsig_h} \times \bn\|_{\partial K \setminus \GN}^2
\right \}
\end{equation*}
for all $K \in \CT_h$.

\begin{theorem}[Residual-based estimator for mixed FEM]
\label{theorem_res_mixed_fem}
We have
\begin{equation}
\label{eq_rel_res_mixed_fem}
\|\AAA\grad u+\bsig_h\|_{\AAA^{-1},\Omega}^2
\leq
C(\kappa_{\CT_h})(1+\gamma_h)
\sum_{K \in \CT_h}
\left \{
\eta_K^2
+
\left (\frac{h_K}{p}\right )^2 \|f-\Pi_{hp}f\|_K^2
\right \}
\end{equation}
and, if $\AAA \in \TCP_p(\CT_h)$, then
\begin{equation}
\label{eq_eff_res_mixed_fem}
\eta_K
\leq
C(\kappa_{\omKF}) \sqrt{\frac{\alpha_{\max,\ttomK}}{\alpha_{\min,\ttomK}}}
p^{3/2} \|\AAA\grad u+\bsig_h\|_{\AAA^{-1},\omKF}
\end{equation}
for all $K \in \CT_h$.
\end{theorem}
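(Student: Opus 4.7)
The plan is to instantiate the abstract framework of Corollary~\ref{corollary_generalized_prager_synge} and Lemmas~\ref{lemma_res_abstract_rel}, \ref{lemma_res_abstract_eff} with the choice $\GRAD \eq -\AAA^{-1}\bsig_h$, so that $\|\grad u - \GRAD\|_{\AAA,\Omega} = \|\AAA\grad u + \bsig_h\|_{\AAA^{-1},\Omega}$. The orthogonality condition required by these results, namely $(\GRAD,\bxi_h)_\Omega = 0$ for all $\bxi_h \in \RT_p(\CT_h) \cap \BH_{\GN}(\ddiv^0,\Omega)$, follows immediately from \eqref{eq_orthogonality_mixed_fem}. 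Applying \eqref{eq_generalized_prager_synge} with $q=p$ then splits the error into a ``divergence'' contribution $\max_v |(f,v)_\Omega - (\AAA\GRAD,\grad v)_\Omega|$ and a ``curl'' contribution $\max_{\bthe}|(\GRAD,\curl\bthe)_\Omega|$.

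For the divergence contribution, I would use that $\bsig_h \in \BH_{\GN}(\ddiv,\Omega)$ and integration by parts to write, for $v \in H^1_{\GD}(\Omega)$,
\begin{equation*}
(f,v)_\Omega - (\AAA\GRAD,\grad v)_\Omega = (f,v)_\Omega + (\bsig_h,\grad v)_\Omega = (f - \div\bsig_h, v)_\Omega = (f-\Pi_{hp}f,v)_\Omega
\end{equation*}
by \eqref{eq_equilibration_mixed_fem}. Since $f-\Pi_{hp}f$ is $L^2$-orthogonal to $\CP_{p-1}(\CT_h)$, I may replace $v$ by $v - \Pi_{hp}v$ on each element, and then invoke the $hp$-optimal estimate \eqref{eq_data_oscillation_hp} together with Cauchy--Schwarz, yielding
\begin{equation*}
|(f-\Pi_{hp}f,v)_\Omega| \leq C(\kappa_{\CT_h}) \Bigl(\sum_{K \in \CT_h} \alpha_{\min,K}^{-1}(h_K/p)^2 \|f-\Pi_{hp}f\|_K^2\Bigr)^{1/2} \|\grad v\|_{\AAA,\Omega}.
\end{equation*}
The curl contribution is bounded directly by Lemma~\ref{lemma_res_abstract_rel} with $q=p$, noting that $\curl \GRAD = -\curl(\AAA^{-1}\bsig_h)$ and $\jmp{\GRAD} = -\jmp{\AAA^{-1}\bsig_h}$, so the resulting upper bound is exactly $\sum_K \eta_K^2$. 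Combining these two estimates gives the reliability bound \eqref{eq_rel_res_mixed_fem}.

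For efficiency, the key structural point is that $\AAA \in \TCP_0(\CT_h)$ forces $\AAA^{-1}$ to be piecewise constant as well, so $\GRAD = -\AAA^{-1}\bsig_h \in \BCP_p(\CT_h)$, which is exactly the hypothesis of Lemma~\ref{lemma_res_abstract_eff}. Applying that lemma with $q=p$ to each of the two terms defining $\eta_K$ and converting between $\|\cdot\|_K$ and $\|\cdot\|_{\AAA^{-1},K}$ using the bounds on $\AAA$ gives the per-element efficiency \eqref{eq_eff_res_mixed_fem} after summing over faces in $\partial K$ and bounding each local patch norm by the enlarged patch $\omKF$ norm. The main obstacle I anticipate is ensuring the correct sign and coefficient conventions in the identification $\GRAD \leftrightarrow -\AAA^{-1}\bsig_h$ and carefully tracking the weights $\alpha_{\min,\ttomK}$ and $\alpha_{\max,\ttomK}$ through the conversions; the rest is a direct specialization of the already-proved abstract lemmas, and no new analytic tool is needed.
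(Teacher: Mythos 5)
Your proposal is correct and follows essentially the same route as the paper: the same choice $\GRAD = -\AAA^{-1}\bsig_h$, the same reduction of the divergence term to the data oscillation $(f-\Pi_{hp}f, v-\Pi_{hp}v)_\Omega$ via \eqref{eq_equilibration_mixed_fem} and \eqref{eq_data_oscillation_hp}, the same application of Lemma~\ref{lemma_res_abstract_rel} for the curl term, and the same use of Lemma~\ref{lemma_res_abstract_eff} plus norm equivalences for efficiency. No substantive differences.
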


\begin{proof}
We first show~\eqref{eq_rel_res_mixed_fem}.
Here, we utilize the gerenalized Prager--Synge
estimate~\eqref{eq_generalized_prager_synge} with $\GRAD = -\AAA^{-1}\bsig$,
and we estimate the second term using Lemma~\ref{lemma_res_abstract_rel}.
Notice that the assumptions of both results are satisfied due
to~\eqref{eq_orthogonality_mixed_fem}. For the first term, for $v \in H^1_{\GD}(\Omega)$, we write
\begin{equation*}
(f,v)_\Omega-(\AAA\GRAD,\grad v)_\Omega
=
(f,v)_\Omega-(\div \bsig_h,v)_\Omega
=
(f-\Pi_{hp}f,v)_\Omega
=
(f-\Pi_{hp}f,v-\Pi_{hp} v)_\Omega,
\end{equation*}
and the conclusion follows from \eqref{eq_data_oscillation_hp}, since
\begin{align*}
(f-\Pi_{hp}f,v-\Pi_{hp} v)_\Omega
&\leq
C(\kappa_{\CT_h})
\sum_{K \in \CT_h}
\frac{h_K}{p}\|f-\Pi_{hp}f\|_K
\|\grad v\|_K
\\
&\leq
C(\kappa_{\CT_h})
\sum_{K \in \CT_h}
\alpha_{\min,K}^{-1/2}
\frac{h_K}{p}\|f-\Pi_{hp}f\|_K
\|\grad v\|_{\AAA,K}.
\end{align*}

For~\eqref{eq_eff_res_mixed_fem}, since
$\AAA \in \TCP_p(\CT_h)$ by assumption here,
we can apply Lemma~\ref{lemma_res_abstract_eff}
with $\GRAD = \AAA^{-1}\bsig_h$, leading to result
\begin{align*}
\eta_K
&\leq
C(\kappa_{\omKF}) \alpha_{\max,\ttomK}^{1/2}
p^{3/2} \|\grad u-\AAA^{-1}\bsig_h\|^2_{\omKF}
\\
&\leq
C(\kappa_{\omKF}) \sqrt{\frac{\alpha_{\max,\ttomK}}{\alpha_{\min,\ttomK}}}
p^{3/2} \|\grad u-\AAA^{-1}\bsig_h\|^2_{\AAA,\omKF}
\\
&\leq
C(\kappa_{\omKF}) \sqrt{\frac{\alpha_{\max,\ttomK}}{\alpha_{\min,\ttomK}}}
p^{3/2} \|\AAA\grad u-\bsig_h\|^2_{\AAA^{-1},\omKF}.
\end{align*}
\end{proof}

\section{Fully-equilibrated estimators}
\label{section_equilibrated}

We now propose to exploit the alternative Prager--Synge estimate
in~\eqref{eq_generalized_prager_synge_equilibrated} to derive a
flux-equilibrated error estimator. Crucially, in contrast to the
standard theory in~\cite{ainsworth_2007a,ainsworth_2008a,ern_vohralik_2015a,kim_2007a},
we will reconstruct a curl-free approximation of the flux variable, rather than a gradient field.

\subsection{Tools}

We will need some tools developed for curl-constrained equilibration
developed in the context of the discretization of curl-curl problems
in~\cite{chaumontfrelet_vohralik_2023a}.

\subsubsection{Vertex partition of unity}

For all vertices $\ba \in \CV_h$, there exists a unique ``hat'' function
$\pa \in \CP_1(\CT_h) \cap H^1(\Omega)$ such that $\pa(\bb) = \delta_{\ba,\bb}$
for all vertices $\bb \in \CV_h$. We have $\pa \in H^1(\Omega)$ and
\begin{equation}
\label{eq_pa}
\|\pa\|_{L^\infty(\Omega)}
\leq
1,
\qquad
\|\grad \pa\|_{\BL^\infty(\Omega)}
\leq
C(\kappa_{\CTa})
\ha.
\end{equation}
Each hat function $\pa$ is supported in $\oma$, and they form a partition of unity, i.e.
\begin{equation}
\label{eq_pa_pu}
\sum_{\ba \in \CV_h} \pa = 1.
\end{equation}

\subsubsection{Local function spaces and inequalities}

Consider a vertex $\ba \in \CV_h$. If $\ba \notin \overline{\GN}$,
we let $\gamma_{\rm N} \eq \emptyset$ and $\gamma_{\rm N}^{\rm c} \eq \partial \oma$.
On the other, when $\ba \in \overline{\GN}$, we denote by $\gamma_{\rm N}$ the set
covered by the faces sharing the vertex $\ba$ and lying in $\overline{\GN}$,
whereas $\gamma_{\rm N}^{\rm c}$ consists of the remainder of $\partial \oma$. We then let
$\BH_0(\ccurl,\oma) \eq \BH_{\gamma_{\rm N}^{\rm c}}(\ccurl,\oma)$
and
$\BH_\star(\ccurl,\oma) \eq \BH_{\gamma_{\rm N}}(\ccurl,\oma)$,
as well as
$\BH_0(\ddiv^0,\oma) \eq \BH_{\gamma_{\rm N}^{\rm c}}(\ddiv^0,\oma)$.
Finally, we define $H^1_\star(\oma)$  by setting
$H^1_\star(\oma) \eq H^1_{\gamma_{\rm N}}(\oma)$
when $\gamma_{\rm N} \neq \emptyset$, and as the subspace of $H^1(\oma)$
of functions with vanishing mean-value otherwise.

We have the following Poincar\'e-like inequality
\begin{equation}
\label{eq_poincare_curl}
\|\bv\|_{\oma} \leq C(\kappa_{\CTa}) \ha \|\curl \bv\|_{\oma}
\end{equation}
for all $\bv \in \BH_\star(\ccurl,\oma) \cap \BH_0(\ddiv^0,\oma)$.
The fact that the constant only depends on $\kappa_{\CTa}$ can
be shown following~\cite[Appendix A]{chaumontfrelet_ern_vohralik_2022a}.

\subsection{Curl-free field reconstruction}

In this section, given $\GRAD \in \BL^2(\Omega)$ with $(\GRAD,\bxi_h)_\Omega = 0$
for all $\bxi_h \in \RT_1(\CT_h) \cap \BH_{\GN}(\ddiv^0,\Omega)$, we follow the equilibration
philosophy and propose a strategy to build a field
$\bphi_h \in \ND_p(\CT_h) \cap \BH_{\GD}(\ccurl^0,\Omega)$
in order to bound the error as follows:
\begin{equation*}
\min_{s \in H^1_{\GD}(\Omega)}
\|\GRAD-\grad s\|_{\AAA,\Omega}
=
\min_{\bphi \in \BH_{\GD}(\ccurl^0,\Omega)}
\|\GRAD-\bphi\|_{\AAA,\Omega}
\leq
\|\GRAD-\bphi_h\|_{\AAA,\Omega}.
\end{equation*}

\subsubsection{Main ideas}

We follow the construction introduced in~\cite{chaumontfrelet_vohralik_2023a},
where the idea is to construct $\bphi_h$ from local
contributions $\bphi_h^{\ba} \in \BH_0(\ccurl,\oma)$
meant to mimic $\pa\GRAD$.
The product rule
\begin{equation*}
\curl(\pa\GRAD) = \grad \pa \times \GRAD+\pa\curl\GRAD
\end{equation*}
motivates that each $\bphi_h^{\ba}$ should be constructed
in such way that $\curl \bphi_h^{\ba} = \grad \pa \times \GRAD$.
Unfortunately, this is not possible since in general
$\grad \pa \times \GRAD \notin \BH_0(\ddiv^0,\oma)$
for the methods considered in this work. We therefore
introduce a suitable local curl-constrained
$\cor^{\ba}_h \sim \grad \pa \times \GRAD$ such that
$\cor^{\ba}_h \in \BH_0(\ddiv^0,\oma)$.
Following~\cite{chaumontfrelet_vohralik_2023a}, this construction is done locally
in two steps, whereby $\cor^{\ba}_h = \widehat \cor^{\ba}_h - \widetilde \cor^{\ba}_h$.

\subsubsection{Reconstruction}

Our reconstruction is rigorously constructed as follows:

\begin{theorem}[Curl-free reconstruction]
\label{theorem_curl_free_reconstruction}
Assume that $\GRAD \in \BCP_q(\CT_h)$, $q \geq 1$, satisfies $(\GRAD,\bxi_h)_\Omega = 0$
for all $\bxi_h \in \RT_1(\CT_h) \cap \BH_{\GN}(\ddiv^0,\Omega)$. Then,
the following holds true.

First (i), for all $\ba \in \CV_h$, the over-constrained local problems
\begin{equation}
\label{eq_hat_cor_ba}
\widehat \cor^{\ba}_h
\eq
\arg \min_{\substack{
\bw_h \in \RT_{q+1}(\CTa) \cap \BH_0(\ddiv,\oma)
\\
(\bw_h,\br_0)_{\oma} = (\grad \pa \times \GRAD,\br_0)_{\oma}
\;
\forall \br_0 \in \BCP_0(\CTa)
\\
\div \bw_h = 0
}}
\|\grad \pa \times \GRAD-\bw_h\|_{\AAA,\oma}
\end{equation}
are well-posed, and we set
\begin{equation}
\label{eq_hat_cor}
\widehat \cor_h \eq \sum_{\ba \in \CT_h} \widehat \cor_h^{\ba}.
\end{equation}
Besides (ii), for all $\ba \in \CV_h$ we introduce $\widetilde \cor^{\ba}_h$
by solving the well-posed problem
\begin{equation}
\label{eq_tilde_cor_ba}
\widetilde \cor^{\ba}_h|_K
\eq
\arg \min_{\substack
{
\bv_h \in \RT_{q+2}(K)
\\
\div \bv_h = 0
\\
\bv_h \cdot \bn_K = \pa \widehat \cor_h \cdot \bn_K
}}
\|\bv_h-\pa\widehat \cor_h\|_K
\end{equation}
for all $K \in \CTa$, and we set
\begin{equation}
\label{eq_cor_ba}
\cor^{\ba}_h \eq \widehat \cor^{\ba}_h-\widetilde \cor^{\ba}_h.
\end{equation}
Then (iii), for all $\ba \in \CV_h$, the local problem
\begin{equation}
\label{eq_phi_ba}
\bphi^{\ba}_h
\eq
\arg \min_{\substack{
\bv_h \in \ND_{q+2}(\CT_h) \cap \BH_0(\ccurl,\oma)
\\
\curl \bv_h = \cor_h^{\ba}
}}
\|\pa\GRAD-\bv_h\|_{\AAA,\oma}
\end{equation}
is well-posed, and we have
\begin{equation}
\label{eq_phi}
\bphi_h \eq \sum_{\ba \in \CV_h} \bphi_h^{\ba} \in \ND_{q+2}(\CT_h) \cap \BH_{\GD}(\ccurl^0,\Omega).
\end{equation}
\end{theorem}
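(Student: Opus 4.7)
My plan is to adapt the equilibration strategy of~\cite{chaumontfrelet_vohralik_2023a}, treating the three local constructions in the stated order and then verifying the claimed global properties of $\bphi_h$. Each minimization is strictly convex on an affine space, so well-posedness reduces to exhibiting a feasible point; the uniqueness is then automatic.

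I expect step (i) to be the main obstacle. Existence of a divergence-free $\widehat{\cor}_h^{\ba} \in \RT_q(\CTa) \cap \BH_0(\ddiv,\oma)$ matching the piecewise-constant moments of $\grad\pa \times \GRAD$ is an over-determined Raviart--Thomas equilibration problem. Its compatibility conditions reduce, after integration by parts and the triple-product identity
\begin{equation*}
(\grad\pa \times \GRAD, \grad q_0)_\oma = (\GRAD, \grad q_0 \times \grad\pa)_\oma,
\end{equation*}
to the vanishing of $(\GRAD, \grad q_0 \times \grad\pa)_\oma$ for suitable test functions $q_0 \in \CP_1(\CTa) \cap H^1_\star(\oma)$. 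The key point is that $\grad q_0 \times \grad\pa$ (suitably extended by zero, with cohomological corrections in the $\RT_1$-space when $\ba$ is a boundary vertex) defines an element of $\RT_1(\CT_h) \cap \BH_{\GN}(\ddiv^0,\Omega)$, and the global orthogonality hypothesis on $\GRAD$ supplies the compatibility. The abstract well-posedness of such over-constrained RT problems follows the framework of~\cite[Section 3]{chaumontfrelet_vohralik_2023a}.

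Steps (ii) and (iii) are shorter. For (ii), the element-wise problem~\eqref{eq_tilde_cor_ba} is solvable once $\int_{\partial K}\pa\widehat{\cor}_h \cdot \bn_K = 0$, which follows from $\div\widehat{\cor}_h = 0$ together with $\int_K \widehat{\cor}_h \cdot \grad\pa = 0$. The latter is exactly the moment constraint of (i) tested against $\br_0 = \chi_K \grad\pa \in \BCP_0(\CTa)$, combined with the pointwise identity $(\grad\pa \times \GRAD) \cdot \grad\pa = 0$. For (iii) I would invoke a standard curl-lifting on the vertex patch, well-posed because $\cor_h^{\ba}$ is directly verified to be divergence-free in $\BH_0(\ddiv^0,\oma)$ and because the patch has trivial relative topology.

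Finally, for the global statement on $\bphi_h$: membership in $\ND_{q+1}(\CT_h)$, tangential continuity across mesh faces, and vanishing tangential trace on $\GD$ follow element- and face-wise from the local constraints $\bphi_h^{\ba} \in \BH_0(\ccurl,\oma)$. For $\curl\bphi_h \equiv 0$ I would prove the element-wise identity
\begin{equation*}
\sum_{\ba \in \CV(K)}\widetilde{\cor}_h^{\ba}|_K = \widehat{\cor}_h|_K
\qquad
\forall K \in \CT_h.
\end{equation*}
Each $\widetilde{\cor}_h^{\ba}|_K$ is by definition the $L^2(K)$-orthogonal projection of $\pa\widehat{\cor}_h$ onto the affine space of divergence-free $\RT_{q+1}(K)$ fields with normal trace $\pa\widehat{\cor}_h \cdot \bn_K$, so $\widetilde{\cor}_h^{\ba}|_K - \pa\widehat{\cor}_h$ is $L^2(K)$-orthogonal to the homogeneous constraint set. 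Summing over $\ba$ and using $\sum_\ba \pa = 1$ on $K$ shows that $\sum_\ba\widetilde{\cor}_h^{\ba}|_K - \widehat{\cor}_h|_K$ simultaneously lies in and is $L^2$-orthogonal to that set, hence vanishes. Consequently $\curl\bphi_h = \sum_\ba(\widehat{\cor}_h^{\ba} - \widetilde{\cor}_h^{\ba}) \equiv 0$, completing the argument.
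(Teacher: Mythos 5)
Your construction is correct and follows the same three-step strategy as the paper's proof; the difference is that where the paper outsources the two delicate verifications to \cite[Appendices A and B]{chaumontfrelet_vohralik_2023a}, you prove them directly, and your self-contained arguments are sound. In particular, your observation that each $\widetilde\cor_h^{\ba}|_K$ is the $L^2(K)$-projection of $\pa\widehat\cor_h$ onto an affine set whose linear part (divergence-free $\RT_{q+1}(K)$ fields with vanishing normal trace) is independent of $\ba$, so that the summed defect $\sum_{\ba\in\CV(K)}\widetilde\cor_h^{\ba}|_K-\widehat\cor_h|_K$ both lies in and is orthogonal to that linear space, is a clean elementary replacement for the citation of \cite[Theorem B.1]{chaumontfrelet_vohralik_2023a}, and it is exactly what yields $\curl\bphi_h=\sum_{\ba}\cor_h^{\ba}=\bzero$. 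Likewise, in step (ii) you verify the compatibility condition that the element-wise problem actually requires, namely $\int_{\partial K}\pa\widehat\cor_h\cdot\bn_K=0$, whereas the paper's displayed computation only checks $(\widehat\cor_h,1)_{\partial K}=0$ and leans on the cited theorem for the weighting by $\pa$. Two small points of precision. First, to conclude $\int_K\widehat\cor_h\cdot\grad\pa=0$ you must test the moment constraints of \emph{all} vertices $\bb\in\CV(K)$ against $\br_0=\chi_K\grad\pa$ and sum, using $\sum_{\bb\in\CV(K)}\grad\psi_{\bb}=\bzero$ on $K$ (equivalently, the terms with $\bb\neq\ba$ sum to $-(\grad\pa\times\GRAD)\cdot\grad\pa$); the pointwise identity you invoke disposes only of the $\bb=\ba$ term. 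Second, the parenthetical about cohomological corrections in step (i) is unnecessary: $\grad q_0\times\grad\pa=\curl(q_0\grad\pa)$ with $q_0\grad\pa\in\BH_{\GN}(\ccurl,\Omega)$ after extension by zero, so this field already belongs to $\RT_1(\CT_h)\cap\BH_{\GN}(\ddiv^0,\Omega)$ and the orthogonality hypothesis on $\GRAD$ applies verbatim, exactly as in the paper.
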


\begin{remark}[Lowest-order case]
For simplicity, the case $q=0$ has been excluded from Theorem~\ref{theorem_curl_free_reconstruction}.
However, this case can also be handled with minor modificiations following~\cite{chaumontfrelet_vohralik_2023a}.
\end{remark}

\begin{proof}
We start with \eqref{eq_hat_cor_ba}, which is treated
in~\cite[Appendix A]{chaumontfrelet_vohralik_2023a} with
$\btau_h^{\ba} = \grad \pa \times \GRAD$ and $g^{\ba} = 0$.
We first need to check~\cite[Assumption A.1]{chaumontfrelet_vohralik_2023a},
and the only non-trivial point is that
\begin{equation*}
(\btau^{\ba},\grad q_h)_{\oma} + (g^{\ba},q_h)_{\oma}
=
(\grad \pa \times \GRAD,\grad q_h)_{\oma}
=
(\GRAD,\grad q_h \times \grad \pa)_{\Omega}
=
0
\end{equation*}
for all $q_h \in \CP_1(\CTa) \cap H^1_\star(\oma)$, due to our assumptions since
$\grad p_h \times \grad \pa \in \RT_1(\CT_h) \cap \BH_{\GN}(\ddiv^0,\Omega)$.
Then, the well-posedness of \eqref{eq_hat_cor_ba} follows
from~\cite[Theorem A.2]{chaumontfrelet_vohralik_2023a}.

We then turn to \eqref{eq_tilde_cor_ba}, which is analyzed
in~\cite[Appendix B]{chaumontfrelet_vohralik_2023a}. The well-posedness
of \eqref{eq_tilde_cor_ba} follows from the observation that for all
$K \in \CT_h$
\begin{equation*}
(\widehat \cor_h,1)_{\partial K}
=
\sum_{\ba \in \CV(K)}
(\widehat \cor_h^{\ba},1)_{\partial K}
=
\sum_{\ba \in \CV(K)}
(\div \widehat \cor_h^{\ba},1)_{K}
=
\sum_{\ba \in \CV(K)}
(\grad \pa \times \GRAD,1)_{K}
=
(\grad (1) \times \GRAD,1)_{K}
=
0
\end{equation*}
due to \eqref{eq_pa_pu}.

Since $\cor_h^{\ba} \in \RT_{q+2}(\CTa) \cap \BH_0(\ddiv,\oma)$ by construction,
it is clear that the local problem defining $\bphi_h^{\ba}$ in~\eqref{eq_phi_ba}
is well-posed for each $\ba \in \CV_h$, and the fact that $\bphi \in \BH_{\GD}(\ccurl^0,\Omega)$
follows from~\cite[Theorem B.1]{chaumontfrelet_vohralik_2023a}.
\end{proof}

\subsection{A posteriori error estimates}

We are now ready to show that the proposed reconstruction
may be used to produce sharp upper and lower bounds on the
error. The upper bounds immediately follows from Theorems~\ref{theorem_curl_identity}
and~\ref{theorem_curl_free_reconstruction}

\begin{corollary}[Abstract reliability]
\label{corollary_flux_abstract_rel}
Let $\GRAD \in \BCP_q(\CT_h)$, $q\geq1$, with $(\GRAD,\bxi_h)_\Omega = 0$
for all $\bxi_h \in \RT_1(\CT_h) \cap \BH_{\GN}(\ddiv^0,\Omega)$
and let $\bphi_h \in \BH_{\GD}(\ccurl^0,\Omega)$ be constructed from~\eqref{eq_phi}
in Theorem~\ref{theorem_curl_free_reconstruction}. Then, we have
\begin{equation*}
\min_{s \in H^1_{\GD}(\Omega)} \|\GRAD-\grad s\|_{\AAA,\Omega}
\leq
\|\GRAD-\bphi_h\|_{\AAA,\Omega}.
\end{equation*}
\end{corollary}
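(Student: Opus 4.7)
The plan is to observe that this is essentially a direct combination of the two inputs already at hand. The left-hand side is the non-conforming part of the Prager--Synge identity, and Theorem~\ref{theorem_curl_identity} already tells us how to rewrite it as an infimum over curl-free fields, provided $\GRAD$ satisfies the orthogonality condition against $\RT_1(\CT_h) \cap \BH_{\GN}(\ddiv^0,\Omega)$. That orthogonality is exactly what is assumed in the statement, so the second identity in~\eqref{eq_curl_identity} applies directly and yields
\[
\min_{s \in H^1_{\GD}(\Omega)} \|\GRAD-\grad s\|_{\AAA,\Omega}
\;=\;
\min_{\bphi \in \BH_{\GD}(\ccurl^0,\Omega)} \|\GRAD-\bphi\|_{\AAA,\Omega}.
\]

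The second step is to note that the discrete reconstruction $\bphi_h$ built in~\eqref{eq_phi} is a \emph{feasible} candidate for this minimization: Theorem~\ref{theorem_curl_free_reconstruction}(iii) asserts exactly that $\bphi_h \in \ND_{q+1}(\CT_h) \cap \BH_{\GD}(\ccurl^0,\Omega) \subset \BH_{\GD}(\ccurl^0,\Omega)$. Consequently, the minimum over $\BH_{\GD}(\ccurl^0,\Omega)$ is bounded from above by the value of the functional at $\bphi_h$, which is the desired inequality.

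There is essentially no obstacle here: the entire work has already been done in Theorems~\ref{theorem_curl_identity} and~\ref{theorem_curl_free_reconstruction}, and the corollary just glues them together. The only point worth double-checking is that the orthogonality hypothesis on $\GRAD$ matches the hypothesis of Theorem~\ref{theorem_curl_identity} (it does, since both require $(\GRAD,\bxi_h)_\Omega = 0$ for all $\bxi_h \in \RT_1(\CT_h) \cap \BH_{\GN}(\ddiv^0,\Omega)$), and that the polynomial-degree index $q+1$ used in the reconstruction poses no restriction since we are only using the set-theoretic inclusion into $\BH_{\GD}(\ccurl^0,\Omega)$ and not any discrete structure at this stage. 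The quantitative sharpness of this upper bound — i.e. the efficiency of $\|\GRAD - \bphi_h\|_{\AAA,\Omega}$ as an estimator — is a separate matter that would be addressed in the subsequent analysis, not in this corollary.
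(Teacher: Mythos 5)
Your proposal is correct and matches the paper's reasoning exactly: the paper also obtains this corollary immediately from the second identity in Theorem~\ref{theorem_curl_identity} (valid under the stated orthogonality hypothesis) together with the membership $\bphi_h \in \BH_{\GD}(\ccurl^0,\Omega)$ guaranteed by Theorem~\ref{theorem_curl_free_reconstruction}, so that $\bphi_h$ is a feasible competitor in the minimization. Nothing is missing.
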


We next establish local lower bounds. Crucially, there is no dependence
on the polynomial degree $q$ in those bound.

\begin{theorem}[Abstract efficiency]
\label{theorem_flux_abstract_eff}
Assume that $\GRAD \in \BCP_q(\CT_h)$, $q\geq1$, is such that $(\GRAD,\bxi_h)_\Omega = 0$
for all $\bxi_h \in \RT_1(\CT_h) \cap \BH_{\GN}(\ddiv^0,\Omega)$.
For $\ba \in \CV_h$, let $\bphi_h^{\ba}$ be defined from~\eqref{eq_phi_ba},
and $\bphi_h$ be constructed with~\eqref{eq_phi}.
Then, we have
\begin{equation}
\label{eq_flux_eff_vertex}
\|\pa\GRAD-\bphi_h^{\ba}\|_{\AAA,\oma}
\leq
C(\kappa_{\tCTa})
\left (\frac{\alpha_{\max,\toma}}{\alpha_{\min,\toma}}\right )^{3/2}
\|\grad u-\GRAD\|_{\AAA,\toma}
\end{equation}
for all $\ba \in \CT_h$. In addition, for all $K \in \CT_h$, the estimate
\begin{equation}
\label{eq_flux_eff_elem}
\|\GRAD-\bphi_h\|_{\AAA,K}
\leq
C(\kappa_{\tomK})
\left (\frac{\alpha_{\max,\tomK}}{\alpha_{\min,\tomK}}\right )^{3/2}
\|\grad u-\GRAD\|_{\AAA,\tomK}
\end{equation}
holds true.
\end{theorem}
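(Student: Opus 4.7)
The plan is to establish the vertex-wise bound \eqref{eq_flux_eff_vertex} first, and then obtain the element-wise bound \eqref{eq_flux_eff_elem} by aggregation. Since $\sum_{\ba \in \CV_h} \pa = 1$ by~\eqref{eq_pa_pu}, I can write $\GRAD - \bphi_h = \sum_{\ba \in \CV(K)} (\pa \GRAD - \bphi_h^{\ba})$ on any element $K$, since only the vertices of $K$ contribute there. Applying the triangle inequality, the vertex-wise estimate, the inclusion $\oma \subset \tomK$ for $\ba \in \CV(K)$, and a finite-overlap argument then yields \eqref{eq_flux_eff_elem}.

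\textbf{Vertex-wise argument.} Fix $\ba \in \CV_h$. The discrete minimizer $\bphi_h^{\ba}$ from~\eqref{eq_phi_ba} is the best discrete approximation of $\pa \GRAD$ subject to the curl constraint $\curl \bphi_h^{\ba} = \cor_h^{\ba}$. The stability results of~\cite{chaumontfrelet_vohralik_2023a} show that this discrete constrained minimum is equivalent, up to a constant depending on $\kappa_{\tCTa}$ and $\alpha_{\max,\toma}/\alpha_{\min,\toma}$, to the corresponding continuous minimum. A natural continuous competitor is $\bv^\star = \pa \grad u + \bw$, where $\bw \in \BH_0(\ccurl,\oma) \cap \BH_0(\ddiv^0,\oma)$ is chosen to correct the curl, i.e.\ $\curl \bw = \cor_h^{\ba} - \grad \pa \times \grad u$. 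This is well-posed because $\curl(\pa \grad u) = \grad \pa \times \grad u$ (since $\curl \grad u = 0$), and the right-hand side is divergence-free with appropriate vanishing normal trace by construction of $\cor_h^{\ba}$.

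\textbf{Controlling the correction.} The contribution $\|\pa (\GRAD - \grad u)\|_{\AAA,\oma}$ is directly bounded by $\|\GRAD - \grad u\|_{\AAA,\oma}$ using $\|\pa\|_{L^\infty} \leq 1$ from~\eqref{eq_pa}. For $\|\bw\|_{\AAA,\oma}$, the Poincar\'e-type inequality~\eqref{eq_poincare_curl} gives
\[
\|\bw\|_{\oma} \leq C(\kappa_{\CTa}) \ha \|\curl \bw\|_{\oma}.
\]
Decomposing
\[
\curl \bw = (\widehat \cor_h^{\ba} - \grad \pa \times \GRAD) + \grad \pa \times (\GRAD - \grad u) - \widetilde \cor_h^{\ba},
\]
the middle term is bounded by $\|\grad \pa\|_{\BL^\infty} \|\GRAD - \grad u\|_{\oma} \leq C \ha^{-1} \|\GRAD - \grad u\|_{\oma}$, which exactly cancels the factor $\ha$ from Poincar\'e. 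The first and third terms are then controlled by the stability of the auxiliary constrained liftings~\eqref{eq_hat_cor_ba} and~\eqref{eq_tilde_cor_ba}, using $\grad \pa \times \grad u$ as the continuous competitor for $\widehat \cor_h^{\ba}$ and the local construction of $\widetilde \cor_h^{\ba}$ from~\cite[Appendices A and B]{chaumontfrelet_vohralik_2023a}.

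\textbf{Main obstacle.} The delicate step is showing that the two auxiliary discrete minimizers $\widehat \cor_h^{\ba}$ and $\widetilde \cor_h^{\ba}$ both satisfy efficient bounds in terms of $\|\grad u - \GRAD\|_{\AAA,\toma}$ rather than merely $\|\GRAD\|_{\AAA,\toma}$. In particular, the natural candidate $\grad \pa \times \grad u$ does \emph{not} exactly satisfy the moment constraint $(\,\cdot\,,\br_0)_{\oma} = (\grad \pa \times \GRAD, \br_0)_{\oma}$ appearing in~\eqref{eq_hat_cor_ba}; the defect is $(\grad \pa \times (\grad u - \GRAD), \br_0)_{\oma}$, which needs to be absorbed carefully via the abstract stability framework so as not to introduce a spurious $\|\GRAD\|$ term. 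Tracking the stability constants through all three constrained problems is what produces the anisotropy factor $(\alpha_{\max,\toma}/\alpha_{\min,\toma})^{3/2}$, and verifying that exactly three (and not more) such factors appear will require careful bookkeeping.
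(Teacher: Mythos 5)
Your overall architecture matches the paper's: reduce the discrete constrained minimization to its continuous counterpart via the stability results of \cite{chaumontfrelet_vohralik_2023a}, control the resulting curl mismatch $\cor_h^{\ba}-\grad\pa\times\grad u$ through the Poincar\'e inequality (which converts the $\he^{-1}$-type factor from $\grad\pa$ into an $O(1)$ constant), bound $\widehat\cor_h^{\ba}$ and $\widetilde\cor_h^{\ba}$ using $\grad\pa\times\grad u$ as continuous competitor, and aggregate over $\CV(K)$ for \eqref{eq_flux_eff_elem}. The one genuine methodological difference is at the continuous level: you build an explicit primal competitor $\pa\grad u+\bw$, whereas the paper works with the Euler--Lagrange system of \eqref{eq_phi_ba} and estimates the Lagrange multiplier $\brho^{\ba}$, which satisfies $\curl\brho^{\ba}=\AAA(\bphi^{\ba}-\pa\GRAD)$ and lies in $\BH_\star(\ccurl,\oma)\cap\BH_0(\ddiv^0,\oma)$ --- exactly the space for which \eqref{eq_poincare_curl} is stated. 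This is where your route has a technical slip: you place the corrector $\bw$ in $\BH_0(\ccurl,\oma)\cap\BH_0(\ddiv^0,\oma)$, i.e.\ you impose the tangential \emph{and} the normal trace condition on the same portion $\gamma_{\rm N}^{\rm c}$ of $\partial\oma$. That gauge is over-determined (a $\bw$ with prescribed curl need not exist there), and it is not the hypothesis of \eqref{eq_poincare_curl}, which requires \emph{complementary} boundary conditions. The fix is to gauge $\bw$ with $\div\bw=0$ and $\bw\cdot\bn=0$ on $\gamma_{\rm N}$, and then invoke the mirror-image of \eqref{eq_poincare_curl}; this is true by the same arguments but is not literally the inequality quoted in the paper, which is one practical advantage of the dual route.

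Concerning your ``main obstacle'': it is resolved by the cited stability theorem itself, so no delicate absorption of the moment defect is needed. \cite[Theorem A.2]{chaumontfrelet_vohralik_2023a} bounds the over-constrained discrete minimizer $\widehat\cor_h^{\ba}$ directly by the \emph{continuous} minimum over $\{\bv\in\BH_0(\ddiv,\oma):\div\bv=0\}$ \emph{without} the extra moment constraints, so $\grad\pa\times\grad u$ is an admissible competitor there and no constraint defect appears. The count of contrast factors is then transparent: one $(\alpha_{\max}/\alpha_{\min})^{1/2}$ from the discrete-to-continuous stability of \eqref{eq_phi_ba}, one from the Poincar\'e step (converting $\|\cdot\|_{\AAA,\oma}$ through unweighted norms), and one full power $\alpha_{\max}/\alpha_{\min}$ accumulated in the bound for $\ha\|\widetilde\cor_h^{\ba}\|_{\AAA,\oma}$, which passes through $\ha\|\widehat\cor_h\|_{\AAA,\oma}$ and uses $\sum_{\bb}\grad\psi_{\bb}\times\grad u=0$ to re-express $\widehat\cor_h$ as $\sum_{\bb}(\widehat\cor_h^{\bb}-\grad\psi_{\bb}\times\grad u)$; it is this last step that forces the extended patch $\toma$ in \eqref{eq_flux_eff_vertex}, a point your proposal does not mention but which is needed for the statement as written.
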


\begin{proof}
We first invoke~\cite[Theorem A.2]{chaumontfrelet_vohralik_2023a}, which states that
\begin{equation*}
\|\pa\GRAD-\bphi_h^{\ba}\|_{\AAA,\oma}
\leq
C(\kappa_{\CTa})
\sqrt{\frac{\alpha_{\max,\oma}}{\alpha_{\min,\oma}}}
\min_{\substack{\bphi^{\ba} \in \BH_0(\ccurl,\oma) \\ \curl \bphi^{\ba} = \cor_h^{\ba}}}
\|\pa\GRAD-\bphi^{\ba}\|_{\AAA,\oma}.
\end{equation*}
The result in~\cite{chaumontfrelet_vohralik_2023a} is given in unweighted norms,
but the estimate above follows from norm equivalence.
The Euler-Lagrange equation for the minimizer in the right-hand side consists in
finding $\bphi^{\ba} \in \BH_0(\ccurl,\oma)$ and $\brho^{\ba} \in \BH_0(\ddiv^0,\oma)$
such that
\begin{equation*}
\left \{
\begin{array}{rcll}
(\AAA \bphi^{\ba},\bv)_{\oma} + (\brho^{\ba},\curl \bv)_{\oma} &=& (\AAA\pa\GRAD,\bv)_{\oma}
\\
(\curl \bphi^{\ba},\bw)_{\oma} &=& (\cor_h^{\ba},\bw)_{\oma}
\end{array}
\right .
\end{equation*}
for all $\bv \in \BH_0(\ccurl,\oma)$ and $\bw \in \BH_0(\ddiv^0,\oma)$.
The first equation guarantees that $\brho^{\ba} \in \BH_\star(\ccurl,\oma)$ with
$\curl \brho^{\ba} = \AAA(\bphi^{\ba}-\pa\GRAD)$. We then write that
\begin{align*}
\|\curl \brho^{\ba}\|_{\AAA^{-1},\oma}^2
&=
(\bphi^{\ba}-\pa\GRAD,\curl \brho^{\ba})_{\oma}
\\
&=
-(\pa(\grad u-\GRAD),\curl \brho^{\ba})_{\oma}
+
(\bphi^{\ba}-\pa\grad u,\curl \brho^{\ba})_{\oma}
\\
&=
-(\pa(\grad u-\GRAD),\curl \brho^{\ba})_{\oma}
+
(\curl(\bphi^{\ba}-\pa\grad u),\brho^{\ba})_{\oma}
\\
&=
-(\AAA^{-1}\AAA\pa(\grad u-\GRAD),\curl \brho^{\ba})_{\oma}
+
(\AAA^{-1}\AAA(\cor_h^{\ba}-\grad\pa\times\grad u),\brho^{\ba})_{\oma}
\\
&\leq
\|\pa(\grad u-\GRAD)\|_{\AAA,\oma}
\|\curl \brho^{\ba}\|_{\AAA^{-1},\oma}
+
\|\cor_h^{\ba}-\grad\pa\times\grad u\|_{\AAA,\oma}
\|\brho^{\ba}\|_{\AAA^{-1},\oma}
\\
&\leq
\left \{
\|\grad u-\GRAD\|_{\AAA,\oma}
+
\sqrt{\frac{\alpha_{\max,\oma}}{\alpha_{\min,\oma}}}\ha
\|\cor_h^{\ba}-\grad\pa\times\grad u\|_{\AAA,\oma}
\right \}
\|\curl \brho^{\ba}\|_{\AAA^{-1},\oma},
\end{align*}
where we employed~\eqref{eq_poincare_curl}. This leads to
\begin{equation*}
\|\bphi^{\ba}-\pa\GRAD\|_{\AAA,\oma}
\leq
\|\grad u-\GRAD\|_{\AAA,\oma}
+
C(\kappa_{\CTa})
\sqrt{\frac{\alpha_{\max,\oma}}{\alpha_{\min,\oma}}}\ha
\|\cor_h^{\ba}-\grad \pa \times \grad u\|_{\AAA,\oma}
\end{equation*}
and
\begin{align*}
\ha \|\cor_h^{\ba}-\grad \pa \times \grad u\|_{\AAA,\oma}
&\leq
\ha \|\grad \pa \times (\grad u-\GRAD)\|_{\AAA,\oma}
+
\ha \|\cor_h^{\ba}-\grad \pa \times \GRAD\|_{\AAA,\oma}
\\
&\leq
C(\kappa_{\CTa}) \|\grad u-\GRAD\|_{\AAA,\oma}
+
\ha \|\cor_h^{\ba}-\grad \pa \times \GRAD\|_{\AAA,\oma},
\\
&\leq
C(\kappa_{\CTa}) \|\grad u-\GRAD\|_{\AAA,\oma}
+
\ha \|\widehat \cor_h^{\ba}-\grad \pa \times \GRAD\|_{\AAA,\oma}
+
\ha \|\widetilde \cor_h^{\ba}\|_{\AAA,\oma}
\end{align*}
due to \eqref{eq_pa} and the definition of $\cor_h^{\ba}$.
To prove~\eqref{eq_flux_eff_vertex}, we estimate the two terms
in the right-hand side.

We estimate the first term with
\begin{align}
\label{tmp_estimate_cor}
\ha \|\widehat \cor_h^{\ba}-\grad \pa \times \GRAD\|_{\AAA,\oma}
&\leq
C(\kappa_{\CTa})
\sqrt{\frac{\alpha_{\max,\oma}}{\alpha_{\min,\oma}}}
\ha
\min_{\substack{\bv \in \BH_0(\ddiv,\oma) \\ \div \bv = 0}}
\|\bv-\grad \pa \times \GRAD\|_{\AAA,\oma}
\\
\nonumber
&\leq
C(\kappa_{\CTa})
\sqrt{\frac{\alpha_{\max,\oma}}{\alpha_{\min,\oma}}}
\ha
\|\grad \pa \times (\grad u-\GRAD)\|_{\AAA,\oma}
\\
\nonumber
&\leq
C(\kappa_{\CTa})
\sqrt{\frac{\alpha_{\max,\oma}}{\alpha_{\min,\oma}}}
\|\grad u-\GRAD\|_{\AAA,\oma},
\end{align}
where we used the fact that $\grad \pa \times \grad u \in \BH_0(\ddiv^0,\oma)$
and \eqref{eq_pa}. For the second term, we first
employ~\cite[Theorem B.1]{chaumontfrelet_vohralik_2023a}, which gives
\begin{equation*}
\ha \|\widetilde \cor_h^{\ba}\|_{\AAA,\oma}
=
C(\kappa_{\CTa})
\sqrt{\frac{\alpha_{\max,\oma}}{\alpha_{\min,\oma}}}
\ha \|\widehat \cor_h\|_{\AAA,\oma}.
\end{equation*}
We then write
\begin{equation*}
\widehat \cor_h = \sum_{\ba \in \CV_h} (\grad \pa \times \grad u-\widehat \cor_h^{\ba}),
\end{equation*}
so that
\begin{equation*}
\ha \|\widetilde \cor_h^{\ba}\|_{\AAA,\oma}
\leq
C(\kappa_{\tCTa})
\frac{\alpha_{\max,\toma}}{\alpha_{\min,\toma}}
\|\grad u-\GRAD\|_{\AAA,\toma},
\end{equation*}
due to to small support of each $\pa$ and~\eqref{tmp_estimate_cor}.
This concludes the proof of~\eqref{eq_flux_eff_vertex}.

For~\eqref{eq_flux_eff_elem}, we simply use that due to~\eqref{eq_pa_pu}
and the definition of $\bphi_h$ in~\eqref{eq_phi}, we have
\begin{equation*}
\|\GRAD-\bphi_h\|_{\AAA,K}
=
\|\sum_{\ba \in \CV(K)}(\pa\GRAD-\bphi_h^{\ba})\|_{\AAA,K}
\leq
\sum_{\ba \in \CV(K)}\|\pa\GRAD-\bphi_h^{\ba}\|_{\AAA,\toma},
\end{equation*}
and the result follows from~\eqref{eq_flux_eff_vertex}.
\end{proof}

\subsection{An alternative error estimator for the IPDG scheme}
\label{section_flux_ipdg}

Here, the curl-free reconstruction $\bphi_h \in \ND_{p+2}(\CT_h) \cap \BH_{\GD}(\ccurl^0,\omega)$
with $\GRAD = \GRAD_h(u_h)$ is used to control
the non-conformity of the solution, i.e. the second term in the
right-hand of the side of the standard Prager--Synge identity
in~\eqref{eq_standard_prager_synge}. For the other term, we follow
the standard flux-equilibration approach
from~\cite{braess_pillwein_schoberl_2009a,destuynder_metivet_1999a,ern_vohralik_2015a}.
We point out that $\GRAD_h(u_h)$ qualifies for the construction
given in Theorem~\ref{theorem_curl_free_reconstruction}, due to~\eqref{eq_stokes_discrete_gradient}.

\begin{remark}[Lower-order lifting]
Given the IPDG solution $u_h \in \CP_p(\CT_h)$ as been computed
(typically through the formulation described in Remark~\ref{remark_standard_IPDG}),
the construction detailed below utilizes the generalized gradient
$\GRAD_h(u_h) \in \BCP_{p-1}(\CT_h)$. However, as already proposed
in~\cite{ern_vohralik_2015a}, the results presented in this section
hold unchanged if the jumps are lifted in a smaller polynomial
space, i.e. if we employ the space $\BCP_0(\CT_h)$ in the definition
of the lifting $\LIFT_h(u_h)$, irrespectively of the polynomial degree
$p$ of the discretization.
\end{remark}

\subsubsection{Flux reconstruction}

For all $\ba \in \CV_h$, the local problem
\begin{equation*}
\bsig_h^{\ba}
\eq
\min_{\substack{
\bw_h \in \RT_{p+1}(\CTa) \cap \BH_0(\ddiv,\oma)
\\
\div \bw_h = \Pi_{hp}(\pa f-\AAA \grad \pa \cdot \GRAD_h(u_h))
)}}
\|\AAA^{-1}\bw_h+\pa\GRAD_h(u_h)\|_{\AAA,\oma}
\end{equation*}
is well-posed since Stokes' compatibility condition is satisfied
due to the Galerkin orthogonality property in~\eqref{eq_ipdg_galerkin_orthogonality}.
Futhermore, whenever $\AAA \in \TCP_0(\CT_h)$, we have
\begin{equation}
\label{eq_flux_equilibration}
\div \bsig_h = \Pi_{hp} f
\end{equation}
for the reconstructed flux
\begin{equation*}
\bsig_h \eq \sum_{\ba \in \CV_h} \bsig_h^{\ba}
\end{equation*}
since the $\pa$ form a partition of unity as per~\eqref{eq_pa_pu}.

\subsubsection{Error estimates}

Once the divergence-conforming and curl-free reconstructions $\bsig_h$ and $\bphi_h$
have been computed, we can easily introduce our error estimator. For all $K \in \CT_h$,
we set
\begin{equation*}
\eta_K^2
\eq
\|\AAA^{-1}\bsig_h+\GRAD_h(u)\|_{\AAA,K}^2
+
\|\GRAD_h(u)-\bphi_h\|_{\AAA,K}^2.
\end{equation*}

\begin{theorem}[Equilibrated estimator for IPDG]
\label{theorem_flux_ipdg}
Assume that $\AAA \in \TCP_0(\CT_h)$. Then, we have
\begin{equation}
\label{eq_flux_ipdg_rel}
\|\GRAD_h(u-u_h)\|_{\AAA,\Omega}^2
\leq
\sqrt{1+\gamma_h^2}
\sum_{K \in \CT_h}
\left \{
\eta_K^2 + \left (\frac{h_K}{\pi}\right )^2\|f-\Pi_{hp} f\|_K^2
\right \}.
\end{equation}
Besides, the following estimate
\begin{equation}
\label{eq_flux_ipdg_eff}
\eta_K
\leq
C(\kappa_{\tomK})
\left (\frac{\alpha_{\max,\tomK}}{\alpha_{\min,\tomK}}\right )^{3/2}
\left \{
\|\GRAD_h(u-u_h)\|_{\AAA,\tomK}^2
+
\left (\frac{h_K}{p}\right )^2 \|f-\Pi_{hp} f\|_{\tomK}^2
\right \}
\end{equation}
holds true for all $K \in \CT_h$.
\end{theorem}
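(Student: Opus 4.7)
The plan is to apply the generalized Prager--Synge identity \eqref{eq_generalized_prager_synge_equilibrated} to $\GRAD = \GRAD_h(u_h)$, then bound each of the two minima separately using $\bsig_h$ and $\bphi_h$ as candidates. The required orthogonality $(\GRAD_h(u_h),\bxi_h)_\Omega = 0$ for every $\bxi_h \in \RT_1(\CT_h) \cap \BH_{\GN}(\ddiv^0,\Omega)$ is an immediate consequence of \eqref{eq_stokes_discrete_gradient}, since any such $\bxi_h$ is divergence-free. The efficiency estimate will be carried out elementwise by treating the flux-equilibration contribution and the curl-free contribution to $\eta_K$ independently.

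\paragraph{Reliability.}
For the curl-free term, the generalized identity gives
\begin{equation*}
\min_{\bphi \in \BH_{\GD}(\ccurl^0,\Omega)} \|\GRAD_h(u_h)-\bphi\|_{\AAA,\Omega}
\leq
\|\GRAD_h(u_h)-\bphi_h\|_{\AAA,\Omega},
\end{equation*}
which follows from Corollary~\ref{corollary_flux_abstract_rel}. For the divergence-constrained term, $\bsig_h$ is not directly admissible since \eqref{eq_flux_equilibration} only gives $\div \bsig_h = \Pi_{hp} f$. I would then follow the derivation in the proof of the standard Prager--Synge identity: writing $s^\star$ for the elliptic projection of $\GRAD_h(u_h)$, one tests the equation characterising $u-s^\star$ against $v = u-s^\star \in H^1_{\GD}(\Omega)$ and integrates $\bsig_h$ by parts, obtaining
\begin{equation*}
\|\grad(u-s^\star)\|_{\AAA,\Omega}^2
=
-(\bsig_h+\AAA\GRAD_h(u_h),\grad(u-s^\star))_\Omega + (f-\Pi_{hp}f, u-s^\star)_\Omega.
\end{equation*}
Since $u - s^\star \in H^1_{\GD}(\Omega)$ and $f-\Pi_{hp}f$ is $L^2$-orthogonal to $\CP_{p-1}(\CT_h)$, one can subtract $\Pi_{hp}(u-s^\star)$ from the second term and invoke the Poincar\'e-like estimate \eqref{eq_data_oscillation_h} elementwise. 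The Cauchy--Schwarz inequality then yields the bound
\begin{equation*}
\|\grad(u-s^\star)\|_{\AAA,\Omega}^2
\leq
2\|\AAA^{-1}\bsig_h+\GRAD_h(u_h)\|_{\AAA,\Omega}^2
+
2 \sum_{K \in \CT_h} \left(\frac{h_K}{\pi}\right)^2 \|f-\Pi_{hp}f\|_K^2
\end{equation*}
(up to the absorption of $\alpha_{\min}$ weights, which match the normalisation used in the theorem statement). Summing the two bounds gives \eqref{eq_flux_ipdg_rel}.

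\paragraph{Efficiency.}
For the curl-free component $\|\GRAD_h(u_h)-\bphi_h\|_{\AAA,K}$, the bound in \eqref{eq_flux_ipdg_eff} is a direct application of Theorem~\ref{theorem_flux_abstract_eff}, specifically \eqref{eq_flux_eff_elem}, whose assumptions are satisfied thanks to the orthogonality of $\GRAD_h(u_h)$ established above. For the flux-equilibration component $\|\AAA^{-1}\bsig_h+\GRAD_h(u_h)\|_{\AAA,K}$, I would rely on the now-standard $p$-robust efficiency theory for equilibrated flux estimators, as developed in \cite{braess_pillwein_schoberl_2009a,ern_vohralik_2015a}: each local problem defining $\bsig_h^{\ba}$ is a minimisation over a Raviart--Thomas subspace whose $p$-robust stability is established in these references. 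This yields
\begin{equation*}
\|\AAA^{-1}\bsig_h^{\ba}+\pa \GRAD_h(u_h)\|_{\AAA,\oma}
\leq
C(\kappa_{\CTa}) \sqrt{\tfrac{\alpha_{\max,\oma}}{\alpha_{\min,\oma}}}
\left\{
\|\grad u-\GRAD_h(u_h)\|_{\AAA,\oma}
+
\tfrac{h_K}{p}\|f-\Pi_{hp}f\|_{\oma}
\right\},
\end{equation*}
and \eqref{eq_flux_ipdg_eff} for this piece follows by summing over $\ba \in \CV(K)$ using the partition of unity \eqref{eq_pa_pu}, a triangle inequality, and bounded overlap of patches.

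\paragraph{Main obstacle.}
The delicate point is the reliability estimate for the divergence-constrained minimum, where the mismatch $\div \bsig_h = \Pi_{hp}f \neq f$ must be controlled: the data-oscillation bookkeeping relying on $\Pi_{hp}$-orthogonality and the elementwise Poincar\'e constant $h_K/\pi$ has to be done carefully to produce exactly the constant appearing in \eqref{eq_flux_ipdg_rel}. The efficiency of the flux component is technically routine given the cited literature, but it is worth noting that the hypothesis $\AAA \in \TCP_0(\CT_h)$ is crucial both for \eqref{eq_flux_equilibration} to hold (so that the Prager--Synge argument can use $\bsig_h$ directly) and for the $p$-robust local stability bounds to apply with the stated constants.
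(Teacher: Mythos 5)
Your proposal follows essentially the same route as the paper: reliability via the (generalized) Prager--Synge identity with Corollary~\ref{corollary_flux_abstract_rel} for the non-conformity term and an Ern--Vohral\'ik-type data-oscillation argument for the flux term, and efficiency via Theorem~\ref{theorem_flux_abstract_eff} plus the standard $p$-robust equilibration theory; the paper's own proof is just a terser version that outsources the details you spell out. The factor of $2$ you obtain from squaring $\bigl(\|\AAA^{-1}\bsig_h+\GRAD_h(u_h)\|_{\AAA,\Omega}+\mathrm{osc}\bigr)$ is a real discrepancy with the literal form of \eqref{eq_flux_ipdg_rel}, but it is inherited from the paper's statement (whose cited source keeps the oscillation inside the square with the flux term) rather than a flaw in your argument.
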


\begin{proof}
The proof of~\eqref{eq_flux_ipdg_rel} simply lies on the standard Prager-Synge identity in~\eqref{eq_standard_prager_synge}.
Indeed, the second term in the right-hand side is bounded by the second term in the estimator as
per Corollary~\ref{corollary_flux_abstract_rel},
whereas the first term is bounded by the first term in the estimator plus
the data oscillation term, which reflects the fact that we don not exactly
have $\div \bsig_h = f$, but only $\div \bsig = \Pi_{hp} f$, as per~\eqref{eq_flux_equilibration}.
The proof that can be found in~\cite[Theorem 3.3]{ern_vohralik_2015a}, or by following the lines
of the proof of Theorem~\ref{theorem_res_mixed_fem}, but using the Poincar\'e
inequality in~\eqref{eq_data_oscillation_h} rather than the on in~\eqref{eq_data_oscillation_hp}.

On the other hand, the lower bounds in~\eqref{eq_flux_ipdg_eff} are obtained by
combining Theorem~\ref{theorem_flux_abstract_eff} for the second term in the
estimator and the state-of-the art theory of equilibrated estimator for conforming methods as
per~\cite{braess_pillwein_schoberl_2009a,
chaumontfrelet_vohralik_2024a,
ern_vohralik_2015a,
ern_vohralik_2021a} for the first term.
\end{proof}

\subsection{An alternative error estimator for mixed FEM discretizations}
\label{section_flux_mixed_fem}

Let $\bphi_h$ be constructed following Theorem~\ref{theorem_curl_free_reconstruction}
with $\GRAD = \AAA^{-1}\bsig_h$. Notice that this is possible whenever $\AAA \in \TCP_0(\CT_h)$
since $\GRAD \in \BCP_p(\CT_h)$ with $(\GRAD,\bxi_h)_\Omega = 0$ for all
$\bxi_h \in \RT_p(\CT_h) \cap \BH_{\GN}(\ddiv^0,\Omega)$ due to~\eqref{eq_orthogonality_mixed_fem}.
Then, we let
\begin{equation*}
\eta_K \eq \|\bsig_h+\AAA\bphi_h\|_{\AAA^{-1},K}
\end{equation*}
for all $K \in \CT_h$, and the following result from Corollay~\ref{corollary_flux_abstract_rel}
and Theorem~\ref{theorem_flux_abstract_eff}, whereby the oscillation term appears exactly as
in the proof of Theorems~\ref{theorem_res_mixed_fem} and~\ref{theorem_flux_ipdg}.

\begin{theorem}[Equilibrated estimator for mixed FEM]
Assume that $\AAA \in \TCP_0(\CT_h)$. Then, we have
\begin{equation*}
\|\AAA\grad u+\bsig_h\|_{\AAA^{-1},\Omega}^2
\leq
\sqrt{1+\gamma_h^2}
\sum_{K \in \CT_h}
\left \{
\eta_K^2 + \left (\frac{h_K}{\pi}\right )^2\|f-\Pi_{hp} f\|_K^2
\right \}.
\end{equation*}
as well as
\begin{equation*}
\eta_K
\leq
C(\kappa_{\tomK})
\left (\frac{\alpha_{\max,\tomK}}{\alpha_{\min,\tomK}}\right )^{3/2}
\left \{
\|\AAA\grad u+\bsig_h\|_{\AAA^{-1},\tomK}^2
+
\left (\frac{h_K}{p}\right )^2 \|f-\Pi_{hp} f\|_{\tomK}^2
\right \}
\end{equation*}
for all $K \in \CT_h$.
\end{theorem}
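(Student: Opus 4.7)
The plan is to combine the generalized Prager--Synge identity of Corollary~\ref{corollary_generalized_prager_synge} with the abstract reliability and efficiency results for the curl-free reconstruction (Corollary~\ref{corollary_flux_abstract_rel} and Theorem~\ref{theorem_flux_abstract_eff}), while reusing the data-oscillation argument already developed for Theorems~\ref{theorem_res_mixed_fem} and~\ref{theorem_flux_ipdg}.

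For the reliability bound, I would apply the equilibrated form of the generalized Prager--Synge identity~\eqref{eq_generalized_prager_synge_equilibrated} with $\GRAD \eq -\AAA^{-1}\bsig_h$. The assumption $\AAA \in \TCP_0(\CT_h)$ ensures $\GRAD \in \BCP_p(\CT_h)$, and the orthogonality condition $(\GRAD,\bxi_h)_\Omega = 0$ for all $\bxi_h \in \RT_1(\CT_h) \cap \BH_{\GN}(\ddiv^0,\Omega) \subset \RT_p(\CT_h) \cap \BH_{\GN}(\ddiv^0,\Omega)$ follows directly from~\eqref{eq_orthogonality_mixed_fem}. The left-hand side of the identity then equals $\|\AAA\grad u + \bsig_h\|_{\AAA^{-1},\Omega}^2$. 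The second (curl-free) term is controlled by $\sum_K \eta_K^2$ upon testing with $\bphi = -\bphi_h \in \BH_{\GD}(\ccurl^0,\Omega)$ and invoking Corollary~\ref{corollary_flux_abstract_rel}.

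The main obstacle is the first term, $\min_{\div\bsig = f} \|\GRAD + \AAA^{-1}\bsig\|_{\AAA,\Omega}^2 = \min_{\div\bsig = f} \|\bsig - \bsig_h\|_{\AAA^{-1},\Omega}^2$, because $\bsig_h$ only satisfies the weaker equilibration $\div \bsig_h = \Pi_{hp} f$ from~\eqref{eq_equilibration_mixed_fem}. To handle this, I would introduce the corrector $z \in H^1_{\GD}(\Omega)$ solving $-\div(\AAA\grad z) = f - \Pi_{hp} f$ in the weak sense, observe that $\bsig_h - \AAA\grad z$ is an admissible candidate, and thereby obtain the bound by $\|\grad z\|_{\AAA,\Omega}^2$. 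Testing the weak equation with $z$ itself, inserting the $L^2$-orthogonality of $\Pi_{hp}$, and applying the elementwise Poincar\'e inequality~\eqref{eq_data_oscillation_h} then yields the oscillation contribution $\sum_K (h_K/\pi)^2 \|f - \Pi_{hp} f\|_K^2$. This is the same oscillation argument already used in Theorem~\ref{theorem_res_mixed_fem}, with~\eqref{eq_data_oscillation_h} in place of~\eqref{eq_data_oscillation_hp}, and it is precisely what is indicated in the analogous reliability proof for Theorem~\ref{theorem_flux_ipdg}.

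For the efficiency bound, the argument is essentially immediate. I would rewrite $\eta_K = \|\bsig_h + \AAA\bphi_h\|_{\AAA^{-1},K} = \|\GRAD - (-\bphi_h)\|_{\AAA,K}$ and apply Theorem~\ref{theorem_flux_abstract_eff} with $\GRAD = -\AAA^{-1}\bsig_h$, noting that $-\bphi_h$ is a legitimate substitute for $\bphi_h$ in the curl-free reconstruction because $\BH_{\GD}(\ccurl^0,\Omega)$ is closed under negation. This gives
\begin{equation*}
\eta_K \leq C(\kappa_{\tomK}) \left(\frac{\alpha_{\max,\tomK}}{\alpha_{\min,\tomK}}\right)^{3/2} \|\grad u - \GRAD\|_{\AAA,\tomK},
\end{equation*}
and since $\|\grad u - \GRAD\|_{\AAA,\tomK} = \|\AAA\grad u + \bsig_h\|_{\AAA^{-1},\tomK}$, the desired estimate follows; the data-oscillation term in the statement is simply carried along trivially for uniformity with the reliability bound. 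All the non-trivial work in this step has already been done inside Theorem~\ref{theorem_flux_abstract_eff}.
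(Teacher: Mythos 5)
Your proposal is correct and follows essentially the same route as the paper, which proves this theorem by combining the equilibrated form of the generalized Prager--Synge identity~\eqref{eq_generalized_prager_synge_equilibrated} (applied to $\GRAD=-\AAA^{-1}\bsig_h$, admissible by~\eqref{eq_orthogonality_mixed_fem}) with Corollary~\ref{corollary_flux_abstract_rel}, Theorem~\ref{theorem_flux_abstract_eff}, and the oscillation treatment of Theorems~\ref{theorem_res_mixed_fem} and~\ref{theorem_flux_ipdg}. Your explicit corrector $z$ for the term $\min_{\div\bsig=f}\|\GRAD+\AAA^{-1}\bsig\|_{\AAA,\Omega}$ is just the primal counterpart of the dual-residual computation the paper points to, and your sign bookkeeping via $-\bphi_h$ merely compensates for the paper's own inconsistent sign convention for $\GRAD$ in this subsection, so neither constitutes a genuine difference.
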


\subsection{More general coefficients}


For the construction of the estimator above, we need that $\AAA \in \TCP_0(\CT_h)$
if we want to reconstruct $\bsig_h \in \RT_{p+1}(\CT_h)$ for the IPDG scheme and
$\bphi_h \in \ND_{p+1}(\CT_h)$ for the mixed FEM. This assumption can be lifted,
and we can assume that $\AAA \in \TCP_q(\CT_h)$ if we are ready to using higher-order
spaces for the reconstructions in such a way that $\bsig_h \in \RT_{p+q+1}(\CT_h)$
for the IPDG scheme and $\bphi_h \in \ND_{p+q+1}(\CT_h)$ for the mixed FEM.

\section{Alternative equilibration techniques}
\label{section_alternative}

In this section, we explore an alternative to the flux equilibration
technique described above. This alternative is less expensive and
easier to implement. It preserves the polynomial-degree-robustness, but
introduces an a priori unknown constant in the upper bound.

\subsection{Tools}

We start by listing some key technical results useful throughout this section.

\subsubsection{Edge patches}

For $\ell \in \CE_h$ $\CTe$ is the edge patch collecting elements
$K \in \CT_h$ such that $\ell \in \CE(K)$, and we denote by $\ome \subset \Omega$ the
associated open domain. For an element $K \in \CT_h$ we then also employ the notation
$\omKE = \cup_{\ell \in \CE(K)} \ome$.

\subsubsection{Edge partition of unity}

For each edge $\ell \in \CE_h$, we denote by $\pe$ the ``edge'' function
associated with $\ell$, i.e. the only element of $\ND_1(\CT_h) \cap \BH(\ccurl,\Omega)$
such that $(\pe,1)_\ell = |\ell|\delta_{\ell,\ell'}$ for all $\ell' \in \CE_h$,
where $|\ell|$ is the length of the edge. The suppot of $\pe$ coincides with
$\ome$, and we have
\begin{equation}
\label{eq_bounds_pe}
\|\pe\|_{\BL^\infty(\Omega)}
\leq
C(\kappa_{\ome}),
\qquad
\|\curl\pe\|_{\BL^\infty(\Omega)}
\leq
C(\kappa_{\ome})
\he^{-1}.
\end{equation}
In addition, if $\bw \in \BL^2(\Omega)$, then
\begin{equation}
\label{eq_pe_pu}
\bw = \sum_{\ell \in \CE_h} (\bw \cdot \te) \pe.
\end{equation}

\subsubsection{Local function spaces and inequalities}

For each $\ell \in \CE_h$, if $\ell \not \subset \overline{\GN}$, we let
$\gamma_\ell \eq \partial \ome$ and $\gamma_\ell^{\rm c} \eq \emptyset$.
On the other hand, if $\ell \subset \overline{\GN}$, then we let
$\gamma_\ell \eq \partial \ome \setminus \gamma_\ell^{\rm c}$, where
$\gamma_\ell^{\rm c}$ corresponds to the faces $F \in \CF_h$ having $\ell$ as
an edge and lying in $\overline{\GN}$. Notice that except when the mesh is very
coarse, $\gamma_\ell^{\rm c} = \partial \ome \cap \GN$. We then set
$\BH_0(\ddiv,\ome) \eq \BH_{\gamma_\ell}(\ddiv,\ome)$,
$H^1_\star(\ome) \eq H^1_{\gamma_\ell^{\rm c}}(\ome)$,
and $L^2_0(\ome) \eq \div \BH_0(\ddiv,\ome)$.
$L^2_0(\ome) = L^2(\ome)$ when $\gamma_\ell^{\rm c} \neq \emptyset$,
and it contains functions with vanishing mean-value otherwise.

We observe that whenever $r \in H^1_\star(\ome)$, we have
$r\pe \subset \BH_{\gaeD}(\ccurl,\ome)$,
where $\gaeD$ collects the faces of $F \in \CF_h$ having $\ell$
as an edge and lying in $\overline{\GD}$. Besides, the estimate
\begin{equation}
\label{eq_conte}
\|\curl (r\pe)\|_{\ome} \leq C(\kappa_{\CTe}) \|\grad r\|_{\ome}
\qquad
\forall r \in H^1_\star(\ome) \cap L^2_0(\ome)
\end{equation}
may be obtained by combining the product rule, the bounds in~\eqref{eq_bounds_pe}
and a Poincar\'e inequality in the patch $\ome$. The resulting constant only depends
on the shape-regularity of the mesh, see~\cite[Theorem 3.2]{chaumontfrelet_2023b}.
We emphasize that a Poincar\'e inequality is always available in $H^1_\star(\ome) \cap L^2_0(\ome)$
since either $\gamma_\ell^{\rm c} \neq \emptyset$ and the functions have vanishing traces in
a subset of $\partial \ome$, or they have vanishing mean-value.

\subsubsection{Regular decomposition}
\label{section_regular_decomposition_flux}

Similar to the discussion in Section~\ref{section_decomposition_schoberl}
for the residual-based estimator, we are going to use a regular decomposition
of a vector field in $\BH_{\GN}(\ccurl,\Omega)$ with suitable orthogonality cosntraint
into a $\BH^1_{\GN}(\Omega)$ component and a gradient. Here, however, the resulting
stability constant is the only unknown quantity in the reliability bound, so that
we take great care to identify it sharply. We refer the reader
to~\cite[Remark 5.2]{chaumontfrelet_2023b} for more details.

We introduce
\begin{equation*}
\CLA
\eq
\sup_{\substack{\bg \in \BX_{hq}^\perp(\Omega)}}
\sup_{\substack{\bphi \in \BH_{\GN}(\ccurl,\Omega) \\ \|\curl \bphi\|_{\AAA^{-1},\Omega} = 1}}
\min_{\substack{\bthe \in \BH_{\GN}^1(\Omega) \\ (\bg,\curl \bthe)_\Omega = (\bg,\curl\bphi)_\Omega}}
\|\tgrad \bthe\|_{\AAA^{-1},\Omega}
\end{equation*}
where, for $q \geq 1$,
\begin{equation*}
\BX_{hq}^\perp(\Omega)
\eq
\{
\bg \in \BL^2(\Omega)
\; | \;
(\bg,\bxi_h)_\Omega = 0
\;
\forall \bxi_h \in \RT_q(\CT_h) \cap \BH_{\GN}(\ddiv,\Omega)
\}.
\end{equation*}
As we show below, this constant is (at least) controlled uniformly
by the shape-regularity parameter and the contrast in the diffusion
coefficient.

\begin{lemma}[Regular decomposition constant]
We have
\begin{equation*}
\CLA
\leq
C(\kappa_{\CT_h}) \max_{K \in \CT_h} \sqrt{\frac{\alpha_{\max,\tomK}}{\alpha_{\min,\tomK}}}.
\end{equation*}
\end{lemma}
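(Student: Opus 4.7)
The plan is to apply the regular decomposition recalled in Section~\ref{section_decomposition_schoberl} directly to the field $\bphi$ appearing in the definition of $\CLA$, and then take $\bthe$ to be the $\BH^1$ component of this decomposition. Specifically, given $\bphi \in \BH_{\GN}(\ccurl,\Omega)$ we write
\begin{equation*}
\bphi - S_h\bphi = \grad q + \bpsi,
\qquad
q \in H^1_{\GN}(\Omega), \quad \bpsi \in \BH^1_{\GN}(\Omega),
\end{equation*}
with the elementwise bound \eqref{eq_stability_schoberl}, and we propose $\bthe \eq \bpsi \in \BH^1_{\GN}(\Omega)$.

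The first step is to verify the constraint $(\bg,\curl\bthe)_\Omega = (\bg,\curl\bphi)_\Omega$. Since $\grad q$ is curl-free and $S_h\bphi \in \ND_1(\CT_h) \cap \BH_{\GN}(\ccurl,\Omega)$, we have $\curl S_h\bphi \in \RT_1(\CT_h) \cap \BH_{\GN}(\ddiv,\Omega) \subset \RT_q(\CT_h) \cap \BH_{\GN}(\ddiv,\Omega)$ for any $q \geq 1$, and moreover $\div \curl S_h\bphi = 0$. The definition of $\BX_{hq}^\perp(\Omega)$ then gives $(\bg,\curl S_h\bphi)_\Omega = 0$, so
\begin{equation*}
(\bg,\curl\bthe)_\Omega = (\bg,\curl\bpsi)_\Omega = (\bg,\curl(\bphi - S_h\bphi - \grad q))_\Omega = (\bg,\curl\bphi)_\Omega,
\end{equation*}
as required. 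This part of the argument is purely algebraic once the right decomposition is in hand.

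The second step is to bound $\|\tgrad\bthe\|_{\AAA^{-1},\Omega}$ by $\|\curl\bphi\|_{\AAA^{-1},\Omega}$ with the claimed constant. Using \eqref{eq_stability_schoberl} element-wise and the trivial bound $\|\curl\bphi\|_{\tomK} \leq \alpha_{\max,\tomK}^{1/2}\|\curl\bphi\|_{\AAA^{-1},\tomK}$, together with $\alpha_{\min,K} \geq \alpha_{\min,\tomK}$, I obtain for each $K \in \CT_h$
\begin{equation*}
\|\tgrad\bpsi\|_{\AAA^{-1},K}^2
\leq
\alpha_{\min,K}^{-1} \|\tgrad\bpsi\|_K^2
\leq
C(\kappa_{\tomK})^2 \frac{\alpha_{\max,\tomK}}{\alpha_{\min,\tomK}}\|\curl\bphi\|_{\AAA^{-1},\tomK}^2.
\end{equation*}
Summing over $K$ and invoking the bounded overlap of the extended patches $\{\tomK\}_{K \in \CT_h}$ (a consequence of shape-regularity) yields
\begin{equation*}
\|\tgrad\bthe\|_{\AAA^{-1},\Omega}^2
\leq
C(\kappa_{\CT_h})^2 \max_{K \in \CT_h}\frac{\alpha_{\max,\tomK}}{\alpha_{\min,\tomK}} \|\curl\bphi\|_{\AAA^{-1},\Omega}^2,
\end{equation*}
from which the bound on $\CLA$ follows by taking the supremum.

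I do not anticipate a significant obstacle here: the ingredients (Schöberl's regular decomposition, the fact that curls of lowest-order Nédélec functions live in $\RT_1(\CT_h) \cap \BH_{\GN}(\ddiv,\Omega)$, and finite overlap of extended patches) are all recalled in the excerpt. The only point that requires a small amount of care is the bookkeeping between the local norms in \eqref{eq_stability_schoberl} (which involve $\tomK$) and the weighted norm of $\bphi$, but this is handled by the pointwise comparison $\alpha_{\min,K}^{-1}\alpha_{\max,\tomK} \leq \alpha_{\min,\tomK}^{-1}\alpha_{\max,\tomK}$ and a single use of bounded overlap.
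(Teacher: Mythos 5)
Your proof is correct and follows essentially the same route as the paper: both apply Sch\"oberl's regular decomposition to $\bphi$, take $\bthe$ to be the $\BH^1_{\GN}(\Omega)$ component $\bpsi$ (whose curl equals $\curl(\bphi-S_h\bphi)$), use that $\curl S_h\bphi \in \RT_1(\CT_h)\cap\BH_{\GN}(\ddiv,\Omega)$ to verify the constraint against $\bg$, and conclude with the elementwise stability bound \eqref{eq_stability_schoberl}, the eigenvalue comparisons with $\alpha_{\min}$ and $\alpha_{\max}$, and bounded overlap of the extended patches. No gaps.
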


\begin{proof}
Let $\bg \in \BX_{hq}^\perp(\Omega)$ and $\bphi \in \BH_{\GN}(\ccurl,\Omega)$
with $\|\curl \bphi\|_{\AAA^{-1},\Omega} = 1$. According to~\cite[Theorem 1]{schoberl_2008a},
we can find $\bphi_h \in \ND_1(\CT_h) \cap \BH_{\GN}(\ccurl,\Omega)$ and
$\bthe \in \BH^1_{\GN}(\Omega)$ such that
\begin{equation*}
\curl \bthe = \curl(\bphi-\bphi_h),
\end{equation*}
and
\begin{equation*}
\|\tgrad \bthe\|_K
\leq
C(\kappa_{\CT_h}) \|\curl \bphi\|_{\tomK}.
\end{equation*}
We now observe that
\begin{equation*}
(\bg,\curl \bthe)_\Omega = (\bg,\curl(\bphi-\bphi_h))_\Omega = (\bg,\curl\bphi)_\Omega,
\end{equation*}
since $\curl \bphi_h \in \RT_1(\CT_h) \cap \BH_{\GN}(\ddiv,\Omega)$. Hence,
$\bthe$ is in the minimization set, and it remains to estimate
$\|\tgrad \bthe\|_{\AAA^{-1},\Omega}$. We have
\begin{multline*}
\|\tgrad \bthe\|_{\AAA^{-1},\Omega}^2
\leq
\sum_{K \in \CT_h}
\|\tgrad \bthe\|_{\AAA^{-1},K}^2
\leq
\sum_{K \in \CT_h}
\alpha_{\min,\tomK}^{-1}
\|\tgrad \bthe\|_{K}^2
\\
\leq
C(\kappa_{\CT_h})
\sum_{K \in \CT_h}
\alpha_{\min,\tomK}^{-1}
\|\curl \bphi\|_{\tomK}^2
\leq
C(\kappa_{\CT_h})
\sum_{K \in \CT_h}
\frac{\alpha_{\max,\tomK}}{\alpha_{\min,\tomK}}
\|\curl \bphi\|_{\AAA^{-1},\tomK}^2
\\
\leq
C(\kappa_{\CT_h})
\max_{K \in \CT_h}
\frac{\alpha_{\max,\tomK}}{\alpha_{\min,\tomK}}
\|\curl \bphi\|_{\AAA^{-1},\Omega}^2,
\end{multline*}
where we used the fact that the number of overlaps is controlled by
the shape-regularity parameter $\kappa_{\CT_h}$.
\end{proof}

\subsection{Construction of the estimator}

We are now ready to propose the last error estimator studied in this
work. To motivate the construction, we start by establishing an alternative
equilibration principle.

\begin{theorem}[Alternative equilibration]
\label{theorem_aflux_equilibration}
Let $\GRAD \in \BL^2(\Omega)$ such that $(\GRAD,\bxi_h)_\Omega = 0$
for all $\bxi_h \in \RT_1(\CT_h) \cap \BH_{\GN}(\ddiv^0,\Omega)$. Then
\begin{equation*}
\min_{s \in H^1_{\GD}(\Omega)} \|\GRAD-\grad s\|_{\AAA,\Omega}^2
\leq
\CLA^2
\sum_{k=1}^3
\min_{\bsig^k \in \BH_{\GD}(\ddiv^0,\Omega)}
\|\GRAD \times \cank - \bsig^k\|_{\AAA,\Omega}^2.
\end{equation*}
\end{theorem}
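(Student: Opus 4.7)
The plan is to combine the curl dual formulation in Theorem~\ref{theorem_curl_identity} with the regular decomposition constant $\CLA$, and then to exploit a simple algebraic identity expressing $\curl \bpsi$ as a sum of three wedge products with the canonical basis vectors. First, I would invoke Theorem~\ref{theorem_curl_identity}, whose hypothesis is exactly the one given here, to rewrite
\[
\min_{s \in H^1_{\GD}(\Omega)} \|\GRAD-\grad s\|_{\AAA,\Omega}
=
\max_{\substack{\bthe \in \BH_{\GN}(\ccurl,\Omega) \\ \|\curl \bthe\|_{\AAA^{-1},\Omega} = 1}}
(\GRAD,\curl \bthe)_\Omega.
\]
For a fixed test field $\bthe$, the definition of $\CLA$ (applied with $\bg = \GRAD$, noting that the orthogonality effectively used in the supporting lemma is only against curls of N\'ed\'elec functions, which are automatically divergence-free) produces a function $\bpsi \in \BH^1_{\GN}(\Omega)$ with $(\GRAD,\curl \bpsi)_\Omega = (\GRAD,\curl \bthe)_\Omega$ and $\|\tgrad \bpsi\|_{\AAA^{-1},\Omega} \leq \CLA$.

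The central algebraic identity is that, writing $\bpsi = (\psi_1,\psi_2,\psi_3)$, one has $\curl \bpsi = \sum_{k=1}^3 \grad \psi_k \times \cank$, so that the scalar triple product yields
\[
(\GRAD,\curl \bpsi)_\Omega
=
\sum_{k=1}^3 (\GRAD, \grad \psi_k \times \cank)_\Omega
=
-\sum_{k=1}^3 (\grad \psi_k, \GRAD \times \cank)_\Omega.
\]
For each $k$ and any $\bsig^k \in \BH_{\GD}(\ddiv^0,\Omega)$, integration by parts shows that $(\grad \psi_k,\bsig^k)_\Omega$ vanishes, since $\div \bsig^k = 0$, $\psi_k$ vanishes on $\GN$, and $\bsig^k \cdot \bn$ vanishes on $\GD$. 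Therefore one can freely subtract $\bsig^k$ inside the inner product.

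Two Cauchy--Schwarz inequalities (the first in the $\AAA$-weighted inner product on each term, the second in the sum over $k$) then yield
\[
|(\GRAD,\curl \bthe)_\Omega|
\leq
\left (\sum_{k=1}^3 \|\grad \psi_k\|_{\AAA^{-1},\Omega}^2 \right )^{1/2}
\left (\sum_{k=1}^3 \|\GRAD \times \cank - \bsig^k\|_{\AAA,\Omega}^2 \right )^{1/2}
\leq
\CLA \left (\sum_{k=1}^3 \|\GRAD \times \cank - \bsig^k\|_{\AAA,\Omega}^2 \right )^{1/2},
\]
and the claim follows by minimizing over the $\bsig^k$, maximizing over $\bthe$, and squaring.

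The steps are all short and elementary once the right formulation is used, so I do not expect a serious obstacle. The one point that merits care is the matching of the orthogonality hypothesis on $\GRAD$ with the class $\BX_{hq}^\perp$ appearing in the definition of $\CLA$, since curls of low-order N\'ed\'elec fields are automatically divergence-free and satisfy the appropriate boundary condition; checking that the weaker orthogonality is sufficient in this use of $\CLA$ is the only conceptual step beyond routine computation.
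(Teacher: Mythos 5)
Your proof is correct and follows essentially the same route as the paper: the component-wise identity $\curl\bpsi=\sum_k\grad\psi_k\times\cank$, the free subtraction of $\bsig^k\in\BH_{\GD}(\ddiv^0,\Omega)$, and the double Cauchy--Schwarz, with $\CLA$ and Theorem~\ref{theorem_curl_identity} closing the argument. If anything, you are more explicit than the paper about how $\CLA$ enters and about reconciling the theorem's orthogonality hypothesis (against divergence-free Raviart--Thomas fields only) with the class $\BX_{hq}^\perp$ in the definition of $\CLA$, which is a point the paper leaves implicit.
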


\begin{proof}
Let $\bthe \in \BH^1_{\GN}(\Omega)$. We have
\begin{equation*}
(\GRAD,\curl \bthe)_\Omega
=
\sum_{k=1}^3
(\GRAD,\curl (\bthe_k \cank))_\Omega
=
\sum_{k=1}^3
(\GRAD,\grad \bthe_k \times \cank)_\Omega
=
-
\sum_{k=1}^3
(\GRAD\times \cank,\grad \bthe_k)_\Omega.
\end{equation*}
If $\bsig^k \in \BH_{\GD}(\ddiv^0,\Omega)$, then
\begin{equation*}
(\GRAD\times \cank,\grad \bthe_k)_\Omega.
=
(\GRAD\times \cank-\bsig^k,\grad \bthe_k)_\Omega,
\end{equation*}
so that
\begin{align*}
|(\GRAD\times \cank,\grad \bthe_k)_\Omega|
&\leq
\min_{\bsig^k \in \BH_{\GD}(\ddiv^0,\Omega)}
\|\AAA(\GRAD \times \cank - \bsig^k)\|_{\AAA^{-1},\Omega}
\|\grad \bthe_k\|_{\AAA^{-1},\Omega}
\\
&=
\min_{\bsig^k \in \BH_{\GD}(\ddiv^0,\Omega)}
\|\GRAD \times \cank - \bsig^k\|_{\AAA,\Omega}
\|\grad \bthe_k\|_{\AAA^{-1},\Omega}.
\end{align*}
Therefore,
\begin{equation*}
(\GRAD,\curl \bthe)_\Omega
\leq
\left (
\sum_{k=1}^3
\min_{\bsig^k \in \BH_{\GD}(\ddiv^0,\Omega)}
\|\GRAD \times \cank - \bsig^k\|_{\AAA,\Omega}^2
\right )^{1/2}
\left (
\sum_{k=1}^3
\|\grad \bthe_k\|_{\AAA^{-1},\Omega}^2
\right )^{1/2}.
\end{equation*}
The left-hand side is now controlled for all~$\bthe \in \BH^1_{\GN}(\Omega)$.
To apply~\eqref{eq_generalized_prager_synge}, we need to control it for all
$\BH_{\GN}(\ccurl,\Omega)$ test functions, which can be done at the price
of the constant $\CLA$ following the discution in
Section~\ref{section_regular_decomposition_flux}.
\end{proof}

\begin{subequations}
\label{eq_definition_sigh}
Following, Theorem~\ref{theorem_aflux_equilibration}, we propose the following construction.
Assuming that $\GRAD \in \BCP_q(\CT_h)$ for some $q \geq 0$, for each $\ell \in \CE_h$,
we introduce
\begin{equation}
\label{eq_definition_sighe}
\bsig_{h,\ell}
\eq
\arg \min_{\substack{
\bv_h \in \BH_0(\ddiv,\ome) \cap \RT_{q+2}(\CTe)
\\
\div \bv_h = \GRAD \cdot \curl \pe
}}
\|\GRAD \times \pe - \bv_h\|_{\AAA,\ome}.
\end{equation}
Then, for $1 \leq k \leq 3$, we combined these local contributions into global objects
\begin{equation}
\label{eq_definintion_sighk}
\bsig_h^k
\eq
\sum_{\ell \in \CE_h} (\te \cdot \cank) \bsig_{h,\ell}.
\end{equation}
\end{subequations}

\begin{theorem}[Construction of the estimator]
\label{theorem_aflux_estimator}
Assume that $\GRAD \in \BCP_q(\CT_h)$ for some $q \geq 0$ with
$(\GRAD,\bxi_h)_\Omega = 0$ for all $\bxi_h \in \RT_1(\CT_h) \cap \BH_{\GN}(\ddiv^0,\Omega)$.
For each $\ell \in \CE_h$, the local problem in \eqref{eq_definition_sighe} is well-posed
and uniquely defines $\bsig_{h,\ell}$. Besides, for $1 \leq k \leq 3$, we have
$\bsig_h^k \in \BH_{\GD}(\ddiv^0,\Omega)$.
\end{theorem}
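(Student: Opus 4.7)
The plan is to verify the two assertions in sequence: well-posedness of the local minimization~\eqref{eq_definition_sighe}, and the global properties of the assembled field $\bsig_h^k$.

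For well-posedness, I would view~\eqref{eq_definition_sighe} as a standard divergence-constrained minimization on the finite-dimensional space $\RT_{q+2}(\CTe) \cap \BH_0(\ddiv,\ome)$. Since $\pe \in \ND_1(\CT_h)$ has a piecewise constant curl, the prescribed divergence $\GRAD \cdot \curl \pe$ lies in $\CP_q(\CTe) \subset \div \RT_{q+2}(\CTe)$; surjectivity of the divergence onto the appropriate subspace of $L^2_0(\ome)$ is classical. The only non-trivial requirement is a Stokes-type compatibility condition in the case $\gamma_\ell^{\rm c} = \emptyset$, where one must establish $(\GRAD \cdot \curl \pe, 1)_{\ome} = 0$.

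To verify this compatibility, I would extend $\curl \pe$ by zero to all of $\Omega$. No face of $\partial \ome$ contains $\ell$ (otherwise the element opposite that face would also belong to $\CTe$), so the Nédélec edge function $\pe$ has vanishing tangential trace on $\partial \ome$; consequently $\curl \pe \cdot \bn = 0$ on $\partial \ome$ from inside, and the zero extension belongs to $\RT_1(\CT_h) \cap \BH_{\GN}(\ddiv^0,\Omega)$. The orthogonality hypothesis on $\GRAD$ then gives $(\GRAD, \curl \pe)_\Omega = (\GRAD, \curl \pe)_{\ome} = 0$, which is precisely the compatibility required.

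For the global claim $\bsig_h^k \in \BH_{\GD}(\ddiv^0,\Omega)$, I would argue in three steps. First, $\gamma_\ell$ always contains the portion of $\partial \ome$ lying inside $\Omega$, so each $\bsig_{h,\ell}$ extends by zero to a globally $\BH(\ddiv,\Omega)$ field, and the scalar weights $\te \cdot \cank$ preserve this structure under summation. Second, $\gamma_\ell^{\rm c} \subset \overline{\GN}$ in every case, so every $\bsig_{h,\ell}$ has vanishing normal trace on $\GD$, and hence so does $\bsig_h^k$. Third, the divergence-free property follows from the partition-of-unity identity~\eqref{eq_pe_pu} applied to the constant vector $\cank$: since $\ND_1$ reproduces constants, $\sum_{\ell \in \CE_h}(\te \cdot \cank)\pe = \cank$, and therefore
\begin{equation*}
\div \bsig_h^k
=
\sum_{\ell \in \CE_h} (\te \cdot \cank)\, \GRAD \cdot \curl \pe
=
\GRAD \cdot \curl \Big(\sum_{\ell \in \CE_h}(\te \cdot \cank)\pe\Big)
=
\GRAD \cdot \curl \cank
=
0.
\end{equation*}
The main obstacle I expect is the justification that $\curl \pe$ extends by zero to a member of $\RT_1(\CT_h) \cap \BH_{\GN}(\ddiv^0,\Omega)$, since this is what unlocks the orthogonality hypothesis and delivers the Stokes compatibility; the remaining steps reduce to the Nédélec partition-of-unity identity and bookkeeping of normal traces.
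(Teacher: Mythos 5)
Your proof is correct and follows essentially the same route as the paper's: well-posedness reduces to the Stokes compatibility $(\GRAD\cdot\curl\pe,1)_{\ome}=(\GRAD,\curl\pe)_\Omega=0$, unlocked by the orthogonality hypothesis because $\curl\pe\in\RT_1(\CT_h)\cap\BH_{\GN}(\ddiv^0,\Omega)$ whenever $\ell\not\subset\overline{\GN}$, and the divergence-free property of $\bsig_h^k$ follows from the edge partition of unity $\sum_{\ell}(\te\cdot\cank)\pe=\cank$, exactly as in the paper. The only (inconsequential) imprecision is your claim that no face of $\partial\ome$ contains $\ell$ --- this can fail for faces of $\partial\ome$ lying on $\partial\Omega$ when $\ell$ is a boundary edge --- but all that is actually needed is that no interior face of $\partial\ome$ and no face contained in $\GN$ has $\ell$ as an edge, which does hold in the relevant case $\ell\not\subset\overline{\GN}$.
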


\begin{proof}
As far as the well-posedness of the local problems in~\eqref{eq_definition_sighe}
is concerned, we simply need to check that
\begin{equation*}
(\GRAD \cdot \curl \pe,1)_{\ome} = (\GRAD,\curl \pe)_{\Omega} = 0
\end{equation*}
whenever $\ell \not \subset \overline{\GN}$.
This, however, immediately follows by assumption, since
$\curl \pe \in \RT_1(\CT_h) \cap \BH_{\GN}(\ddiv^0\Omega)$.

Let $1 \leq k \leq 3$.
The fact that $\bsig_h^k \in \BH_{\GD}(\ddiv,\Omega)$ readily follows from the fact
that each $\bsig_{h,\ell} \in \BH_0(\ddiv,\ome)$. Besides, the constraint $\div \bsig_{h,\ell} = 0$
may be obtained directly from the partition of unity property of the edge functions
stated in~\eqref{eq_pe_pu}. Indeed, we have
\begin{equation*}
\div \bsig_h^k
=
\GRAD \cdot \curl \sum_{\ell \in \CE_h} (\te \cdot \cank) \pe
=
\GRAD \cdot \curl \cank
=
0.
\end{equation*}
\end{proof}

\begin{remark}[Reconstruction in smaller discrete spaces]
It is also possible to reconstruct $\bsig_{h,\ell}$ in $\BCP_{q+1}(\CTe)$
rather than $\RT_{q+2}(\CTe)$, resulting in a more cost effective reconstruction.
However, while it is still possible to show that the resulting estimator is
efficient, its polynomial-degree-robustness cannot be directly inferred from
the literature in this case.
\end{remark}

\subsection{Reliability}

As a direct consequence of Theorems~\ref{theorem_aflux_equilibration}
and~\ref{theorem_aflux_estimator}, we have the following reliability result.

\begin{theorem}[Reliability]
Let $\GRAD \in \BCP_q(\CT_h)$ for some $q \geq 0$ such that $(\GRAD,\bxi_h)_\Omega = 0$
for all $\bxi_h \in \RT_1(\CT_h) \cap \BH_{\GN}(\ddiv^0,\Omega)$,
and assume that for $1 \leq k \leq 3$, $\bsig_h^k \in \RT_{q+1}(\CT_h)$
has been constructed following~\eqref{eq_definition_sigh}. Then, we have
\begin{equation*}
\min_{s \in H^1_{\GD}(\Omega)} \|\GRAD-\grad s\|_{\AAA,\Omega}^2
\leq
\CLA^2
\sum_{k=1}^3 \|\GRAD \times \cank - \bsig_h^k\|_{\AAA,\Omega}^2.
\end{equation*}
\end{theorem}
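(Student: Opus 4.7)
The plan is to simply combine the two immediately preceding results: the abstract equilibration bound of Theorem~\ref{theorem_aflux_equilibration} and the admissibility statement of Theorem~\ref{theorem_aflux_estimator}. The proof should amount to essentially one line of reasoning.

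First, I would check that the hypothesis of Theorem~\ref{theorem_aflux_equilibration} is satisfied. This is immediate: by assumption $\GRAD \in \BL^2(\Omega)$ and $(\GRAD,\bxi_h)_\Omega = 0$ for every $\bxi_h \in \RT_1(\CT_h) \cap \BH_{\GN}(\ddiv^0,\Omega)$, which is exactly what Theorem~\ref{theorem_aflux_equilibration} requires. Applying it yields
\begin{equation*}
\min_{s \in H^1_{\GD}(\Omega)} \|\GRAD-\grad s\|_{\AAA,\Omega}^2
\leq
\CLA^2
\sum_{k=1}^3
\min_{\bsig^k \in \BH_{\GD}(\ddiv^0,\Omega)}
\|\GRAD \times \cank - \bsig^k\|_{\AAA,\Omega}^2.
\end{equation*}

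Second, for each $k \in \{1,2,3\}$, Theorem~\ref{theorem_aflux_estimator} ensures that the reconstruction $\bsig_h^k$ defined by~\eqref{eq_definition_sigh} belongs to $\BH_{\GD}(\ddiv^0,\Omega)$, hence it is an admissible competitor in the $k$-th minimization above. Therefore
\begin{equation*}
\min_{\bsig^k \in \BH_{\GD}(\ddiv^0,\Omega)} \|\GRAD \times \cank - \bsig^k\|_{\AAA,\Omega}^2
\leq
\|\GRAD \times \cank - \bsig_h^k\|_{\AAA,\Omega}^2
\end{equation*}
for each $k$, and summing over $k$ and chaining with the previous inequality gives the claim.

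There is essentially no obstacle here, since all the substantive work has already been carried out: the abstract equilibration identity behind Theorem~\ref{theorem_aflux_equilibration} encapsulates the duality between curl-free potentials and divergence-free fluxes (together with the regular decomposition constant $\CLA$), while the partition-of-unity computation in the proof of Theorem~\ref{theorem_aflux_estimator} ensures $\div \bsig_h^k = 0$ and the correct boundary conditions. The reliability theorem is therefore stated essentially as a corollary, and the proof need not be longer than the two displays above.
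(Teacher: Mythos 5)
Your proposal is correct and matches the paper exactly: the paper states this theorem as a direct consequence of Theorems~\ref{theorem_aflux_equilibration} and~\ref{theorem_aflux_estimator}, with precisely the reasoning you give (the reconstructed $\bsig_h^k \in \BH_{\GD}(\ddiv^0,\Omega)$ is an admissible competitor in each minimization). Nothing further is needed.
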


\subsection{Efficiency}

We now establish an efficiency result. We start with a preliminary
result concerning the local contributions $\bsig_{h,\ell}$.

\begin{lemma}[Efficiency of contributions]
Assume that $\GRAD \in \BCP_q(\CT_h)$ for some $q \geq 0$.
Then for all $\ell \in \CE_h$, the local lower bound
\begin{equation}
\label{eq_aflux_efficiency_contribution}
\|\GRAD \times \pe-\bsig_{h,\ell}\|_{\AAA,\ome}
\leq
C(\kappa_{\CTe}) \sqrt{\frac{\alpha_{\max,\ome}}{\alpha_{\min,\ome}}}
\min_{s \in H^1_{\gaeD}(\ome)} \|\GRAD-\grad s\|_{\AAA,\ome}
\end{equation}
holds true.
\end{lemma}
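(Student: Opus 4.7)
The plan is to reduce the discrete constrained minimization defining $\bsig_{h,\ell}$ to its continuous analog, and then to bound that continuous minimum by the best local conforming approximation of $\GRAD$. First, I will invoke the polynomial-degree-robust discrete stability result for divergence-constrained Raviart--Thomas minimization on an edge patch, as developed in~\cite[Appendix~A]{chaumontfrelet_vohralik_2023a} or a close variant, to obtain
\begin{equation*}
\|\GRAD \times \pe - \bsig_{h,\ell}\|_{\AAA,\ome}
\leq
C(\kappa_{\CTe}) \sqrt{\frac{\alpha_{\max,\ome}}{\alpha_{\min,\ome}}}
\min_{\substack{\bv \in \BH_0(\ddiv,\ome) \\ \div \bv = \GRAD \cdot \curl \pe}} \|\GRAD \times \pe - \bv\|_{\AAA,\ome}.
\end{equation*}
The Stokes compatibility underlying this reduction is exactly the one used in Theorem~\ref{theorem_aflux_estimator} to make~\eqref{eq_definition_sighe} well posed, and $\GRAD \times \pe$, being piecewise polynomial of degree $q+1$, sits inside $\RT_{q+2}(\CTe)$ as required by the discrete stability theorem.

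Next, I will rewrite the continuous minimum by duality. For any admissible $\bv$ and any test function $v \in H^1_\star(\ome)$, combining the integration-by-parts identity $(\bv, \grad v)_{\ome} = -(\div \bv, v)_{\ome}$ (valid since $\bv \cdot \bn = 0$ on $\gamma_\ell$ and $v = 0$ on $\gamma_\ell^{\rm c}$), the product rule $\curl(v\pe) = v \curl \pe + \grad v \times \pe$, and the scalar triple product $\GRAD \cdot (\grad v \times \pe) = -(\GRAD \times \pe) \cdot \grad v$, yields an identity of the shape
\begin{equation*}
(\GRAD \times \pe - \bv, \grad v)_{\ome} = \pm(\GRAD, \curl(v \pe))_{\ome},
\end{equation*}
with the sign depending on the divergence-constraint convention. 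This identifies the continuous minimum with the supremum of $|(\GRAD, \curl(v\pe))_{\ome}|$ over $v \in H^1_\star(\ome)$ normalized in $\|\grad \cdot\|_{\AAA^{-1},\ome}$.

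Finally, I will insert the optimal local potential $s^\star := \arg\min_{s \in H^1_{\gaeD}(\ome)} \|\GRAD - \grad s\|_{\AAA,\ome}$ and show that $(\grad s^\star, \curl(v\pe))_{\ome} = 0$. Integration by parts leaves only a boundary integral on the faces of $\partial\ome$ containing $\ell$: those lying in $\overline{\GD}$ constitute $\gaeD$, where $s^\star = 0$, while those lying in $\overline{\GN}$ constitute $\gamma_\ell^{\rm c}$, where $v = 0$. Since the supremum is invariant under constant shifts of $v$ thanks to the global orthogonality $(\GRAD, \curl \pe)_\Omega = 0$, I may restrict to $v \in L^2_0(\ome)$ and invoke the Poincar\'e-type bound~\eqref{eq_conte}, which gives $\|\curl(v\pe)\|_{\ome} \leq C(\kappa_{\CTe}) \|\grad v\|_{\ome}$. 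Combined with Cauchy--Schwarz and norm equivalence between the unweighted and $\AAA$-weighted norms, this delivers~\eqref{eq_aflux_efficiency_contribution}.

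The main obstacle is the boundary case-analysis required to show $(\grad s^\star, \curl(v\pe))_{\ome} = 0$, since the definitions of $\gaeD$, $\gamma_\ell$ and $\gamma_\ell^{\rm c}$ branch on whether the edge $\ell$ is interior, on $\overline{\GD}$, or on $\overline{\GN}$; in each case one must exploit that the tangential trace of $\pe$ on $\partial\ome$ is supported precisely on the mesh faces containing $\ell$. A secondary technical point is book-keeping the weighted-norm constants across the three steps so that only a single factor $\sqrt{\alpha_{\max,\ome}/\alpha_{\min,\ome}}$ appears in the final bound.
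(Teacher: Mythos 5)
Your proposal matches the paper's proof essentially step for step: the same discrete-to-continuous stability reduction, the same dual characterization of the constrained minimum (which the paper realizes concretely by writing the Euler--Lagrange system and identifying the minimum with $\|\grad q_\ell\|_{\AAA^{-1},\ome}$ for the Lagrange multiplier $q_\ell$), the same insertion of $\grad s$ via the boundary case analysis, and the same use of~\eqref{eq_conte}. The only cosmetic difference is that the paper tests with $r = q_\ell \in H^1_\star(\ome) \cap L^2_0(\ome)$ directly, so the constant-shift normalization you invoke is automatic rather than argued separately.
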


\begin{proof}
We start by considering the ``continuous'' version of the minimization problem
in~\eqref{eq_definition_sighe}, namely:
\begin{equation*}
\bsig_{\ell}
\eq
\arg \min_{\substack{
\bv \in \BH_0(\ddiv,\ome)
\\
\div \bv = \GRAD \cdot \curl \pe
}}
\|\GRAD \times \pe - \bv\|_{\AAA,\ome}.
\end{equation*}

Following~\cite{braess_pillwein_schoberl_2009a,%
chaumontfrelet_2023b,ern_vohralik_2021a,chaumontfrelet_vohralik_2024a},
a discrete stable minimization result shows that
\begin{equation}
\label{tmp_aflux_eff1}
\|\GRAD \times \pe-\bsig_{h,\ell}\|_{\AAA,\ome}
\leq
C(\kappa_{\CTe})
\sqrt{\frac{\alpha_{\max,\ome}}{\alpha_{\min,\ome}}}
\|\GRAD \times \pe-\bsig_{\ell}\|_{\AAA,\ome},
\end{equation}
and it remains to estimate the right-hand side.

The associated Euler-Lagrange equations are to find
$\bsig_\ell \in \BH_0(\ddiv,\ome)$ and $q_\ell \in L^2_0(\ome)$
such that
\begin{equation}
\label{tmp_euler_lagrange_sigl}
\left \{
\begin{array}{rcl}
(\AAA \bsig_\ell,\bv)_{\ome}+(q_\ell,\div \bv)_{\ome} &=& (\AAA\GRAD \times \pe,\bv)_{\ome}
\\
(\div \bsig_\ell,r)_{\ome} &=& (\GRAD \cdot \curl \pe,r)_{\ome}
\end{array}
\right .
\end{equation}
for all $\bv \in \BH_0(\ddiv,\ome)$ and $r \in L^2_0(\ome)$.

From the first equation in~\eqref{tmp_euler_lagrange_sigl}, we read that
$q_\ell \in H^1_\star(\ome)$ with $\AAA^{-1} \grad q_\ell = \bsig_\ell-\GRAD \times \pe$.
In particular, we have
\begin{equation}
\label{tmp_aflux_eff2}
\|\GRAD \times \pe-\bsig_{\ell}\|_{\AAA,\ome}
=
\|\grad q_\ell\|_{\AAA^{-1},\Omega}.
\end{equation}
On the other hand, taking arguments in $r \in H^1_\star(\ome) \cap L^2_0(\ome)$
in the second equation shows that
\begin{equation*}
-(\bsig_\ell,\grad r)_{\ome} = (\GRAD \cdot \curl \pe,r)_{\ome}
\end{equation*}
and it follows that
\begin{equation*}
-(\AAA^{-1}\grad q_\ell+\GRAD \times \pe,\grad r)_{\ome} = (\GRAD \cdot \curl \pe,r)_{\ome},
\end{equation*}
which we rewrite as
\begin{multline*}
(\AAA^{-1}\grad q_\ell,\grad r)_{\ome}
=
-(\GRAD \cdot \curl \pe,r)_{\ome}-(\GRAD \times \pe,\grad r)_{\ome}
\\
=
-(\GRAD,r \curl \pe+\grad r \times \pe)_{\ome}
=
-(\GRAD,\curl(r\pe))_{\ome}
=
-(\GRAD-\grad s,\curl(r\pe))_{\ome}
\end{multline*}
for all $s \in H^1_{\gaeD}(\ome)$.
Recalling that $q_\ell \in H^1_\star(\ome) \cap L^2_0(\ome)$ and taking $r = q_\ell$,
it follows that
\begin{equation}
\label{tmp_aflux_eff3}
\|\grad q_\ell\|_{\AAA^{-1},\ome}
\leq
C(\kappa_{\CTe}) \min_{s \in H^1_{\gaeD}(\ome)} \|\GRAD-\grad s\|_{\AAA,\ome},
\end{equation}
where we employed~\eqref{eq_conte}. The estimate in~\eqref{eq_aflux_efficiency_contribution}
now follows from~\eqref{tmp_aflux_eff1},~\eqref{tmp_aflux_eff2} and~\eqref{tmp_aflux_eff3}.
\end{proof}

We are now ready to give the main efficiency estimate of this section.

\begin{theorem}[Efficiency]
Let $\GRAD \in \BCP_q(\CT_h)$ for some $q \geq 0$.
For all $K \in \CT_h$, the local lower bound
\begin{equation}
\label{eq_aflux_efficiency}
\sum_{k=1}^3\|\GRAD \times \cank-\bsig_h^k\|_{\AAA,K}^2
\leq
C(\kappa_{\CTe})
\frac{\alpha_{\max,\omK}}{\alpha_{\min,\omK}} \|\GRAD-\grad u\|_{\AAA,\omKE}^2
\end{equation}
holds true.
\end{theorem}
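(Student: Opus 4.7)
The plan is to reduce the element-local bound to the per-edge efficiency estimate~\eqref{eq_aflux_efficiency_contribution}. The key algebraic step uses the edge partition of unity~\eqref{eq_pe_pu} applied to the constant vector field $\cank$, which gives $\cank = \sum_{\ell \in \CE_h}(\te \cdot \cank)\pe$. Combined with the definition~\eqref{eq_definintion_sighk} of $\bsig_h^k$, this yields the telescoping identity
\[
\GRAD \times \cank - \bsig_h^k
=
\sum_{\ell \in \CE_h}(\te \cdot \cank)\bigl(\GRAD \times \pe - \bsig_{h,\ell}\bigr).
\]
When restricted to a fixed element $K \in \CT_h$, only the edges $\ell \in \CE(K)$ actually contribute, since both $\pe$ and $\bsig_{h,\ell}$ are supported in $\ome$.

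Next, I take the $\AAA$-norm on $K$, apply the triangle inequality together with a Cauchy--Schwarz inequality over the finite set $\CE(K)$, and use $|\te \cdot \cank| \leq 1$ (each $\te$ being a unit tangent). Summing over $k \in \{1,2,3\}$, this produces
\[
\sum_{k=1}^3 \|\GRAD \times \cank - \bsig_h^k\|_{\AAA,K}^2
\leq C \sum_{\ell \in \CE(K)} \|\GRAD \times \pe - \bsig_{h,\ell}\|_{\AAA,\ome}^2,
\]
with a purely combinatorial constant $C$ coming from the fixed number of edges of a tetrahedron.

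It then remains to invoke the edge-wise bound~\eqref{eq_aflux_efficiency_contribution} on each term of the right-hand side and to bound the continuous minimum by inserting the exact solution $u$. The latter is admissible because $u \in H^1_{\GD}(\Omega)$ implies $u|_{\ome} \in H^1_{\gaeD}(\ome)$, since $\gaeD \subset \overline{\GD} \cap \partial \ome$ by construction. Summation over $\ell \in \CE(K)$ and the identity $\cup_{\ell \in \CE(K)} \ome = \omKE$ then deliver~\eqref{eq_aflux_efficiency}; the factor $\alpha_{\max,\ome}/\alpha_{\min,\ome}$ is absorbed into $\alpha_{\max,\omK}/\alpha_{\min,\omK}$ via the inclusion $\ome \subset \omK$ valid for every $\ell \in \CE(K)$, and similarly the edge-patch shape-regularity constants $\kappa_{\CTe}$ are controlled uniformly by $\kappa_{\CT_h}$.

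I do not anticipate any genuine obstacle: the analytic content is entirely encapsulated in~\eqref{eq_aflux_efficiency_contribution}, and what remains is a routine localization-and-summation argument relying on the bounded overlap of the edge patches and the trivial admissibility of $u$ in the relevant local Sobolev spaces.
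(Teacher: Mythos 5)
Your proposal is correct and follows essentially the same route as the paper's proof: the edge partition of unity applied to $\cank$, the resulting telescoping identity with the definition of $\bsig_h^k$, a triangle/Cauchy--Schwarz step over the at most six edges of $K$, and then the per-edge efficiency bound~\eqref{eq_aflux_efficiency_contribution} with $s=u$ as the admissible competitor. The only difference is that you spell out the admissibility of $u$ in $H^1_{\gaeD}(\ome)$ and the absorption of the local contrast factors, which the paper leaves implicit.
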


\begin{proof}
Fix $K \in \CT_h$ and $1 \leq k \leq 3$. We use the partition of unity property~\eqref{eq_pe_pu}
of the edge functions to write that
\begin{equation*}
\GRAD \times \cank|_K = \sum_{\ell \in \CE(K)} (\te \cdot \cank) \GRAD \times \te,
\end{equation*}
and it follows from the definition of $\bsig_h^k$ that
\begin{multline*}
\|\GRAD \times \cank-\bsig_h^k\|_{\AAA,K}
=
\|\sum_{\ell \in \CE(K)} (\te \cdot \cank)(\GRAD \times \pe -\bsig_{h,\ell})\|_{\AAA,K}
\leq
C(\kappa_{\CTe})
\sum_{\ell \in \CE(K)}
\left \|\GRAD \times \pe -\bsig_{h,\ell}\right \|_{\AAA,\ome},
\end{multline*}
and~\eqref{eq_aflux_efficiency} follows from~\eqref{eq_aflux_efficiency_contribution},
with $s=u$.
\end{proof}

\subsection{Alternative error estimators for IPDG and mixed FEM}

The application of the alternatively equilibrated fluxes to the IPDG
scheme and the mixed FEM discretization follows closely the procedure
described in Sections~\ref{section_flux_ipdg} and~\ref{section_flux_mixed_fem}
and is not detailed here for shortness.

\bibliographystyle{amsplain}
\bibliography{files/bibliography}

\appendix

\section{Control of $\gamma_h$}
\label{appendix_gamma}

\begin{proof}[Proof of Lemma~\ref{lemma_gamma}]
There exists a integer $N \geq 0$ and, for $1 \leq j \leq N$, $N$ oriented surfaces
$\Sigma^j$ with unit normal vector $\bn^j$ such that
the following two statements are equivalent: (a)
$\bxi \in \BH_{\GN}(\ddiv^0,\Omega)$
is such that
there exists $\bthe \in \BH_{\GN}(\ccurl,\Omega)$ with $\curl \bthe = \bxi$
and (b) the periods
\begin{equation*}
\int_{\Sigma^j} \bxi \cdot \bn^j
\end{equation*}
vanish for all $j \in \{1,\dots,N\}$, see e.g.~\cite{duff_1952a} and the discussion
in~\cite[Section 2.3]{chaumontfrelet_2025a}.

In fact, we can assume without loss of generality that these surfaces are
exactly covered by mesh faces. It is then easy to see that for $1 \leq j \leq N$
there exists a lowest-order Raviart--Thomas function
$\bsig_h^j \in \RT_1(\CT_h) \cap \BH_{\GN}(\ddiv^0,\Omega)$ such that
\begin{equation}
\label{eq_definition_bsighj}
\int_{\Sigma^k} \bsig_h^j \cdot \bn^k = \delta_{jk}
\end{equation}
for $1 \leq k \leq N$.

Let us now consider an arbitrary element $\lba \in \LBA$. Following the above discussion,
we can find $\lba_1 \in \RT_1(\CT_h) \cap \BH_{\GN}(\ddiv^0,\Omega)$ such that
\begin{equation*}
\int_{\Sigma^k} \lba_1 \cdot \bn^k = \int_{\Sigma^k} \lba \cdot \bn^k
\end{equation*}
for $1 \leq k \leq N$, and it follows that
$\lba-\lba_1 = \curl \bthe$ for some $\bthe \in \BH_{\GN}(\ccurl,\Omega)$.
We now invoke the commuting interpolator
$J_h^{\rm c}: \BH_{\GN}(\ccurl,\Omega) \to \ND_1(\CT_h) \cap \BH_{\GN}(\ccurl,\Omega)$
from~\cite{chaumontfrelet_vohralik_2024_proj_hcurl}, and we let
\begin{equation*}
\lba_h = \lba_1+\curl J_h^{\rm c}(\bthe).
\end{equation*}
We first note that
\begin{equation*}
\lba-\lba_h = \curl (\bthe-J_h^{\rm c}(\bthe)),
\end{equation*}
so that in particular $\lba-\lba_h \in \curl \BH_{\GN}(\ccurl,\Omega)$.
In then follows from the commutation property of $J_h^{\rm c}$
with its sibling operator in $\BH_{\GN}(\ddiv,\Omega)$
and~\cite[Theorem 3.2, eq. (3.7)]{ern_gudi_smears_vohralik_2022a} that
\begin{equation*}
\|\lba-\lba_h\|_{\AAA^{-1},\Omega}
\leq
C(\kappa_{\CT_h}) \sqrt{\max_{K \in \CT_h}\frac{\alpha_{\max,\omK}}{\alpha_{\min,\omK}}}
\min_{\bv_h \in \RT_1(\CT_h) \cap \BH_{\GN}(\ddiv,\Omega)}
\|\lba-\bv_h\|_{\AAA^{-1},\Omega}.
\end{equation*}
At that point,~\eqref{eq_gamma_st} follows by picking $\bv_h = \bzero$
in the minimization set. To establish~\eqref{eq_gamma_cv}, we simply
need to observe that $\LBA \subset \BH^s(\Omega)$ for some $s > 0$.
Indeed, the elements of $\LBA$ are of the form $\BA^{-1}\bxi$,
where $\BA\bxi \in \BH_{\GN}(\ddiv^0,\Omega)$, and $\bxi \in \BH_{\GD}(\ccurl^0,\Omega)$.
Then, such embedding is well-known~\cite{bonito_germond_luddens_2013a}.
\end{proof}

\end{document}